\DeclareFontFamily{U}{mathx}{}
\DeclareFontShape{U}{mathx}{m}{n}{ <-> mathx10 }{}
\DeclareSymbolFont{mathx}{U}{mathx}{m}{n}
\DeclareMathAccent{\widecheck}{0}{mathx}{"71}
\newcommand{\Mp}{M_p(\psi,\phi;\psi^*,\phi^*)}
\newcommand{\Calphamax}{\mathcal{G}}
\newcommand{\Calpha}{\mathcal{G}_\alpha}
\newcommand{\gCZ}{\mathcal{Z}}
\newcommand{\CZ}{\mathcal{C}}
\newcommand{\N}{\mathcal{N}}
\newcommand{\Np}{\mathcal{N}_p}
\newcommand{\K}{\mathcal{K}}
\newcommand{\atom}{h}
\theoremstyle{thmstyleone}%
\newtheorem{theorem}{Theorem}
\newtheorem{proposition}{Proposition}
\newtheorem{lemma}{Lemma}
\theoremstyle{thmstyletwo}%
\newtheorem{remark}{Remark}%
\newtheorem*{nonumremark}{Remark}%
\theoremstyle{thmstylethree}%
\newtheorem{definition}{Definition}%
\begin{document}
	
	\title[Wavelet Series Expansion in Hardy Spaces with Approximate Duals]{Wavelet Series Expansion in Hardy Spaces with Approximate Duals}
	
	
	\author*[1]{\fnm{Youngmi} \sur{Hur}}\email{yhur@yonsei.ac.kr}
	
	\author[2]{\fnm{Hyojae} \sur{Lim}}\email{hyo5064@yonsei.ac.kr}
	
	
	\affil*[1]{\orgdiv{Department of Mathematics}, \orgname{Yonsei University},\orgaddress{ \state{Seoul}, \country{Korea}}}
	
	\affil[2]{\orgdiv{School of Mathematics and Computing (Mathematics)}, \orgname{Yonsei University}, \orgaddress{\state{Seoul}, \country{Korea}}}
	

	\abstract{In this paper, we provide sufficient conditions for the functions \(\psi\) and \(\phi\) to be the approximate duals in the Hardy space \(H^p(\mathbb{R})\) for all \(0<p\le 1\). 
	Based on these conditions, we obtain the wavelet series expansion in the Hardy space \(H^p(\mathbb{R})\) with the approximate duals. The important properties of our approach include the following: (i) our results work for any \(0<p \leq 1\); (ii) we do not assume that the functions \(\psi\) and \(\phi\) are exact duals; (iii) we provide a tractable bound for the operator norm of the associated wavelet frame operator so that it is possible to check the suitability of the functions \(\psi\) and \(\phi\).
}

	\keywords{Approximate duals, Hardy spaces, Wavelet frame operator, Wavelet series}

	
	\pacs[MSC Classification]{42C40, 42C15}
	
	\maketitle
	\bmhead{Acknowledgments}
	This work was supported in part by the National Research Foundation of Korea (NRF) [Grant Numbers 2015R1A5A1009350 and 2021R1A2C1007598].
	
\section{Introduction}
\label{sec:intro}
\subsection{Preliminaries}

Let $W_A(\psi)$ be the wavelet system with dilation factor $A>1$ generated by $\psi\in L^2(\mathbb{R})$, i.e. $W_A(\psi):=\{\psi_{jk}:=A^{j/2} \psi(A^j\cdot-k):j,k\in\mathbb{Z}\}$. We recall that $W_A(\psi)$ is a wavelet frame in $L^2(\mathbb{R})$ if, for every $f\in L^2(\mathbb{R})$,
$$	
f = \sum\limits_{j,k \in \mathbb{Z}} \left< f, \phi_{jk} \right> 
	\psi_{jk}
$$
with its dual wavelet frame $W_A(\phi)$. 
In this case, the two generating functions \(\psi\) and \(\phi\) are referred to as {\it (exact) duals} in \(L^2(\mathbb{R})\) \cite{[BuiLaug]FSF,[ChristensenLaugesen]ApproximateDual}.
	
Wavelet frames have been used in literature to characterize function spaces such as Triebel-Lizorkin spaces and Besov spaces\footnote{These characterizations are studied mostly under the dyadic dilation setting with $A=2$, although many of them can be easily extended to a more general case with dilation $A>1$.}. In particular, for the Hardy space $H^p(\mathbb{R})$, $0<p\le 1$, if the functions \(\psi\) and \(\phi\) are exact duals in \(L^2(\mathbb{R})\), and they satisfy smoothness and vanishing moment conditions depending on $p$, then any function $f\in H^p(\mathbb{R})$ has a wavelet series expansion of the form \cite{[FJ]DB,[FJ]DD,[FJW]LPT,[Kyr]DSFS}
	\begin{equation}
	f = \sum\limits_{j,k\in\mathbb{Z}} \left<f,\phi_{jk}\right> \psi_{jk}.
	\label{eq:classic_WSE}
	\end{equation}

Approximate duals have been studied as a way to generalize the exact duals in \(L^2(\mathbb{R})\). Approximate duals in \cite{[Christensen]IFR,[ChristensenLaugesen]ApproximateDual} are the functions \(\psi\) and \(\phi\) whose associated wavelet frame operator \(U\) defined as
	\[U (g) := \sum\limits_{j,k\in\mathbb{Z}} \left<g,\phi_{jk}\right> \psi_{jk}\]
{\it approximates} the identity operator in \(L^2(\mathbb{R})\) in the sense that \(\|U-Id\|_{L^2\to L^2}<1\). 

More recently, Bui and Laugesen study approximate duals in \(H^1(\mathbb{R})\) \cite{[BuiLaug]WFBonLH}. They identify the conditions for the two generating functions \(\psi\) and \(\phi\) to satisfy in order to be approximate duals in \(H^1(\mathbb{R})\) in the sense that \(\left\| U - Id \right\|_{H^1\to H^1}<1\). It is also shown that these approximate duals give a wavelet series expansion of the form
\begin{equation}
f = \sum\limits_{j,k\in\mathbb{Z}} \left< U^{-1} f , \phi_{jk} \right> \psi_{jk}
\label{eq:BL_WSE}
\end{equation}
for any function $f\in H^1(\mathbb{R})$. 
Here, \(U^{-1}f\) makes sense since, in this case, \(U\) is bounded as well.

This wavelet series expansion can be understood as a generalization of the aforementioned classical wavelet series expansion in \eqref{eq:classic_WSE} for \(H^1(\mathbb{R})\). In fact, if $U$ is the identity operator in \(H^1(\mathbb{R})\), then \(\psi\) and \(\phi\) are exact duals in \(H^1(\mathbb{R})\) and the series expansion in \eqref{eq:BL_WSE} reduces to the series expansion in \eqref{eq:classic_WSE}. 

Of course, when we already have the exact duals \(\psi\) and \(\phi\) in \(L^2(\mathbb{R})\), we can check to see if they satisfy additional conditions to get the series expansion in~\eqref{eq:classic_WSE}. However, for this process to go smoothly, we have to know both \(\psi\) and \(\phi\) concretely enough to check these conditions, which may not always be possible. 

For example, it is well known that the Mexican hat function \(\psi\) has its dual \(\phi\) so that \(W_2(\psi)\) and \(W_2(\phi)\) are dual wavelet frames in \(L^2(\mathbb{R})\) \cite{[Daubechies]TenLecture}. Although the dual function \(\phi\) of \(\psi\) is not unique, to the best of our knowledge, the dual function \(\phi\) has never given explicitly enough to allow us to check additional conditions for \(H^1(\mathbb{R})\) characterization, resulting the series expansion in \eqref{eq:classic_WSE}.

On the other hand, by showing that the Mexican hat function \(\psi\) and some carefully chosen function \(\phi\) are approximate duals in \(H^1(\mathbb{R})\), it is proved in \cite{[BuiLaug]WFBonLH} that the Mexican hat function \(\psi\) provides 
\[f = \sum\limits_{j,k\in\mathbb{Z}} \left< U^{-1} f , \phi_{jk} \right> \psi_{jk},\quad \forall \, f\in H^1(\mathbb{R}).\]

In this paper, we extend the above result on \(H^1(\mathbb{R})\) by Bui and Laugesen to cover the full range of the Hardy spaces, i.e. \(H^p(\mathbb{R})\) for \(0<p \leq 1\). 

\medskip

Let $L^2:=L^2(\mathbb{R})$ and $H^p:=H^p(\mathbb{R})$. 	The (mixed) wavelet frame operator \(U: L^2 \to L^2\) is studied in recent papers \cite{[BuiLaug]FSF,[BuiLaug]WLPMHC,[BuiLaug]WFBonLH}, and we recall its definition below.
		\begin{definition}
		Let \(\psi, \phi\in L^2\). Let \(A>1\) be the dilation factor. The \textit{wavelet frame operator} \(U:=U_{\psi,\phi}\) associated with a \textit{synthesizer} \(\psi\) and an \textit{analyzer} \(\phi\) is defined as
		\[U_{\psi,\phi} (f) := \sum\limits_{j,k\in\mathbb{Z}} \left<f,\phi_{jk}\right> \psi_{jk}\]
		where \(\psi_{jk} = A^{j/2} \psi(A^j\cdot-k)\) and \(\phi_{jk} = A^{j/2} \phi(A^j\cdot-k)\), as before.	
		\label{def:wavelet operator U}
	\end{definition}

We recall a result from \cite{[BuiLaug]WFBonLH} (see also \cite[Proposition 6 and 7]{[BuiLaug]FSF}) providing sufficient conditions on \(\psi\) and \(\phi\) for the operator \(U\) to be bounded on \(L^2\). Furthermore, the operator can be written in an integral form with the wavelet frame kernel \(K_{\psi,\phi}\) defined below. Throughout the paper, \(W^{k,p}\) is used to denote the Sobolev space, for \(k\in \mathbb{N}\) and \(1 \leq p <\infty\).
	
	\begin{theorem}[\cite{[BuiLaug]WFBonLH}]
		Assume \(\psi, \phi \in L^2\) satisfy
		\begin{equation}
			\lvert\widehat{\psi}(\xi)\rvert, \lvert\widehat{\phi}(\xi)\rvert \lesssim \begin{cases}
				\lvert\xi\rvert^\epsilon,  & \lvert\xi\rvert\leq1,\\
				\lvert\xi\rvert^{-\epsilon-\frac{1}{2}}, & \lvert\xi\rvert\geq1,
			\end{cases}
			\quad \hbox{for some } \epsilon>0.
			\label{eq:BLdecay}
		\end{equation}
		Assume \(\widehat{\psi}, \widehat{\phi} \in W^{2,1}\). Then the wavelet frame operator \(U=U_{\psi,\phi}\) is bounded and linear on \(L^2\), and if \(f\in L^2\) has compact support and \(x \notin \text{supp}f\), then
		\[(U_{\psi,\phi}\ f)(x) = \int_{\mathbb{R}} K_{\psi,\phi}(x,y) f(y) dy\]
		where the wavelet frame kernel \(K_{\psi,\phi}\) is defined as
		\begin{equation}
			K_{\psi,\phi}(x,y) := \sum\limits_{j \in\mathbb{Z}} \sum\limits_{k \in\mathbb{Z}} \psi_{jk}(x) \overline{\phi_{jk}(y)}.
			\label{WaveletFrameKernel}
		\end{equation}
		\label{Result: L2boundedness}
	\end{theorem}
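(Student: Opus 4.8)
The plan is to prove the two assertions separately: first the boundedness and linearity of $U$ on $L^2$, and then the off-diagonal integral representation. Throughout, the frequency decay \eqref{eq:BLdecay} drives the boundedness, while the regularity hypothesis $\widehat{\psi},\widehat{\phi}\in W^{2,1}$ gets converted into spatial decay of $\psi$ and $\phi$ and is used for the kernel estimates.

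For boundedness I would first show that each of $W_A(\psi)$ and $W_A(\phi)$ is a Bessel sequence in $L^2$. The decay \eqref{eq:BLdecay} controls the diagonal sum $\sum_{j}|\widehat{\psi}(A^{-j}\xi)|^2$: for fixed $\xi$, the terms with $A^{-j}|\xi|\le1$ decay like $A^{-2j\epsilon}$ and those with $A^{-j}|\xi|\ge1$ decay like $A^{j(2\epsilon+1)}$, so the sum is finite and uniformly bounded in $\xi$. Combined with the off-diagonal overlap terms, which are tamed using the decay of $\widehat{\psi},\widehat{\phi}$ (equivalently the spatial decay below), a standard wavelet Bessel criterion yields finite Bessel bounds $B_\psi,B_\phi$. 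Writing $U$ as the composition of the synthesis operator $T_\psi$ with the analysis operator $f\mapsto(\langle f,\phi_{jk}\rangle)_{j,k}$ and applying Cauchy--Schwarz,
\[ |\langle Uf, g\rangle| \le \Big(\sum_{j,k}|\langle f,\phi_{jk}\rangle|^2\Big)^{1/2}\Big(\sum_{j,k}|\langle \psi_{jk}, g\rangle|^2\Big)^{1/2} \le \sqrt{B_\psi B_\phi}\,\|f\|\,\|g\|, \]
gives boundedness with $\|U\|_{L^2\to L^2}\le\sqrt{B_\psi B_\phi}$; linearity is immediate from the definition.

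For the integral representation I would first convert the regularity hypothesis into pointwise decay. Since $\widehat{\psi}\in W^{2,1}$, integrating by parts twice in the inversion formula $\psi(x)=\frac{1}{2\pi}\int\widehat{\psi}(\xi)e^{ix\xi}\,d\xi$ — the boundary terms vanishing because $\widehat{\psi},\widehat{\psi}'$ are continuous and decay at infinity — yields $|x|^2|\psi(x)|\le\frac{1}{2\pi}\|\widehat{\psi}''\|_{L^1}$, and combined with $|\psi(x)|\le\frac{1}{2\pi}\|\widehat{\psi}\|_{L^1}$ this gives $|\psi(x)|,|\phi(x)|\lesssim(1+|x|)^{-2}$. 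Using this decay I would bound the kernel sum in two stages: summing first over $k$ via the elementary estimate $\sum_{k}(1+|u-k|)^{-2}(1+|v-k|)^{-2}\lesssim(1+|u-v|)^{-2}$ gives $\big|\sum_k\psi_{jk}(x)\overline{\phi_{jk}(y)}\big|\lesssim A^{j}(1+A^{j}|x-y|)^{-2}$ for each $j$, and summing this geometric-type series over $j$ yields $\sum_{j,k}|\psi_{jk}(x)|\,|\phi_{jk}(y)|\lesssim|x-y|^{-1}$ for $x\ne y$. Hence $K_{\psi,\phi}(x,y)$ converges absolutely off the diagonal. Finally, for compactly supported $f$ with $\operatorname{dist}(x,\operatorname{supp}f)=:\delta>0$, the bound $\int|f(y)|\sum_{j,k}|\psi_{jk}(x)|\,|\phi_{jk}(y)|\,dy\lesssim\delta^{-1}\int_{\operatorname{supp}f}|f|<\infty$ lets me interchange summation and integration by Fubini, turning $(Uf)(x)=\sum_{j,k}\psi_{jk}(x)\int\overline{\phi_{jk}(y)}f(y)\,dy$ into $\int K_{\psi,\phi}(x,y)f(y)\,dy$; the same estimate shows this series is continuous in $x$ off $\operatorname{supp}f$ and agrees a.e.\ with the $L^2$-function $Uf$, making the pointwise identity legitimate.

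The main obstacle I anticipate is the two-stage kernel estimate. Getting the clean $(1+A^{j}|x-y|)^{-2}$ bound after the $k$-summation requires the quadratic spatial decay, hence the full strength of $\widehat{\psi},\widehat{\phi}\in W^{2,1}$; and the subsequent sum over scales must balance the coarse range $A^{j}|x-y|\le1$ against the fine range $A^{j}|x-y|\ge1$ to produce exactly the $|x-y|^{-1}$ singularity. It is this Calder\'on--Zygmund-type decay that makes the Fubini step go through, whereas establishing the Bessel bounds from \eqref{eq:BLdecay} is comparatively routine.
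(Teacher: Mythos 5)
Your proposal is correct, but note that the paper does not actually prove this statement: Theorem~\ref{Result: L2boundedness} is quoted from \cite{[BuiLaug]WFBonLH} (with the underlying operator bounds from \cite{[BuiLaug]FSF}), so there is no in-paper proof to compare line by line. What can be compared is the paper's own machinery for its generalization, and there your route is essentially the same in structure: your conversion of $\widehat{\psi},\widehat{\phi}\in W^{2,1}$ into the spatial decay $|\psi(x)|,|\phi(x)|\lesssim (1+|x|)^{-2}$ is exactly Lemma~\ref{lemma:W^{n,1}} with $l=2$; your two-stage kernel estimate (sum over $k$, then over scales $j$, splitting at $A^j|x-y|\le 1$) is the $\alpha=0$ case of Lemma~\ref{lemma:AbsoluteConvergenceEstimationK}; and your factorization of $U$ through analysis and synthesis operators with Bessel bounds derived from \eqref{eq:BLdecay} is how the paper (via Lemmas~\ref{lemma: analyzer L2 bddness} and \ref{lemma: synthesis L2 bddness}, citing \cite{[BuiLaug]FSF,[BuiLaug]WLPMHC}) obtains $L^2$-boundedness. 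The one genuine methodological difference is that where you bound the inner sum $\sum_k\psi_{jk}(x)\overline{\phi_{jk}(y)}$ by a purely spatial discrete-convolution estimate, the paper's quantitative analogue (Proposition~\ref{prop:estimation of alpha partial of K_0}) periodizes this sum and estimates its Fourier coefficients, which yields the explicit computable constants $\sigma_\alpha(\psi,\phi)$, $\tau_\alpha(\psi,\phi)$ needed later for checking approximate duality; your qualitative bound suffices for the theorem as stated but would not produce those constants. Two minor points: your inversion formula $\psi(x)=\frac{1}{2\pi}\int\widehat{\psi}(\xi)e^{ix\xi}d\xi$ uses a normalization different from the paper's convention $\hat{f}(\xi)=\int f(x)e^{-2\pi i x\xi}dx$ (harmless, constants only), and in the final step you should say explicitly that absolute convergence off the diagonal plus $L^2$-unconditional convergence of $\sum_{j,k}\langle f,\phi_{jk}\rangle\psi_{jk}$ identifies the pointwise sum with $Uf$ a.e.\ via an a.e.-convergent subsequence of partial sums; you gesture at this and it is the right argument.
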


In \cite{[BuiLaug]WFBonLH}, by imposing sufficient decay conditions and smoothness conditions with respect to the frequency domain of the synthesizer \(\psi\) and the analyzer~\(\phi\), it is shown that the wavelet frame operator \(U\) is the \(L^2\)-based Calder\'{o}n-Zygmund operator\footnote{Since we focus on the Calder\'{o}n-Zygmund operator associated with \(p\) (c.f., Defintion~\ref{def: Calderon-Zygmund}) throughout the paper, we call the standard Calder\'{o}n-Zygmund operator, defined in \cite{[Grafakos]CFA,[HW]FCW}, \(L^2\)-based.} (see, for example, \cite{[Grafakos]CFA,[HW]FCW} for definition), and has a bounded extension from \(L^2\) to  \(H^1\), \(L^p\) (\(1<p<\infty\)), and the BMO space. Also shown is the invertibility of \(U\) by assuming a sufficient computable condition on \(\psi\) and \(\phi\) to be approximate duals. This invertibility result is in turn used to show that every element in these function spaces has a wavelet series expansion via \(W_A(\psi)\). These results allow the Mexican hat function to be an explicit example for \(\psi\) on these function spaces, including \(H^1\).
	
In this paper, we extend the above result of \(H^1\) to the Hardy space \(H^p\) for all \(0<p \leq 1\). Specifically, we obtain the boundedness and the invertibility of the wavelet frame operator \(U\) on \(H^p\), \(0<p \leq 1\), by using the appropriately generalized Calder\'{o}n-Zygmund operator for the full range of Hardy spaces \(H^p\). Subsequently, we show the wavelet series expansion with the wavelet system \(W_A(\psi)\) on the Hardy space, by applying the generalized result of Frazier and Jawerth, and by showing the equivalence between the Hardy space \(H^p\) and the (non-dyadic) Triebel-Lizorkin space \(\dot{F}^{02}_p\) (see Definition \ref{def:TLspace,sequenceSpace}\ref{def:TLspace}). We also show that the Mexican hat function can be used as an explicit example as the synthesizer \(\psi\) on \(H^p\) for all \(1/2<p \leq 1\).

To obtain our main results, we mostly follow the approach of \cite{[BuiLaug]WFBonLH}, but also employ tools developed in \cite{[BuiLaug]FSF,[BuiLaug]WLPMHC,[BuiLaug]WFBonLH} as well as the classical theory in \cite{[Bownik]AniHardy,[GarciaFrancia]WNI,[Grafakos]MFA,[Meyer]CZM,[Stein]HA} to handle a Calder\'{o}n-Zygmund operator and the \(\varphi\)-transform with proper modifications as needed. 

\medskip 	

We use the following definition of a Calder\'{o}n-Zygmund operator associated with \(0<p \leq 1\) (c.f., \cite{[Meyer]WaO}) which is properly generalized to apply to the Hardy space \(H^p\). As in many results and definitions in our paper, the conditions in this definition depend on \(p\), through the floor function
	\[\Np:=\lfloor 1/p - 1 \rfloor.\]
Note that \(\Np =0\) if \(p=1\), or more generally, if \(1/2 < p \leq 1\). 


\begin{definition}
	\label{def: Calderon-Zygmund}
	
	A \textit{Calder\'{o}n-Zygmund operator} \(Z\) (associated with \(p\)) is a bounded linear operator on \(L^2\) such that for compactly supported \(f\in L^2\),
	\[
	Zf(x) = \int_{\mathbb{R}} \K(x,y) f(y) dy \quad \text{for} \quad x\notin \text{supp} f
	\]
	where the kernel \(\K: (\mathbb{R} \times \mathbb{R}) \setminus \{(x,x): x\in\mathbb{R}\} \to \mathbb{C}\) is a measurable function satisfying the following conditions:
	\begin{enumerate}[(a)]
		\item There is a constant \(\gCZ>0\) such that 
		\begin{equation}
			\lvert \partial^\alpha_{y} \K(x,y)\rvert \leq \frac{\gCZ}{\lvert x-y \rvert^{\alpha+1}}, \quad \forall \, 0\leq \alpha \leq \Np +1.
			\label{CZCond2}
		\end{equation}
		\item For any compactly supported \(f\in L^2\) with \(\int f(x) x^\beta dx = 0\) for all \(0\leq \beta \leq \Np\),
		\begin{equation}
			\int Zf(x) x^\alpha dx = 0, \quad \forall \,0 \leq \alpha \leq \Np . 
			\label{CZCond3} 
		\end{equation}
	\end{enumerate}
		\label{def:CZO}
\end{definition}

	\begin{remark} 
		The \(L^2\)-based Calder\'{o}n-Zygmund operator studied in the literature (c.f.,~\cite{[HW]FCW}) does not require any vanishing moment condition such as (\ref{CZCond3}), but requires the smoothness conditions:
		\begin{equation*}
			\lvert \partial^\alpha_{y} \K(x,y)\rvert \leq \frac{\gCZ}{\lvert x-y \rvert^{\alpha+1}}, \quad \alpha= 0,1.
		\end{equation*}
		In some literature (e.g., \cite{[Grafakos]CFA}), these smoothness conditions are replaced by slightly weaker smoothness conditions involving the Lipschitz continuity.
		\qed
	\end{remark}
	\begin{remark} 
		When \(p=1\) (and thus \(\Np=0\)), the conditions in Definition~\ref{def: Calderon-Zygmund} are similar to those in the Calder\'{o}n-Zygmund operator introduced in \cite{[BuiLaug]WFBonLH} for the Hardy space \(H^1\), where smoothness conditions with Lipschitz continuity are used. \qed
	\end{remark}

\subsection{Main Theorems}

In this subsection, we state our main theorems. Our first main theorem gives sufficient conditions on \(\psi, \phi \in L^2\) for the wavelet frame operator \(U=U_{\psi,\phi}\) to be a Calder\'{o}n-Zygmund operator, and have a bounded extension to \(H^p\).
	
	\begin{theorem}[Boundedness of Wavelet Frame Operator on $H^p$]
		Let \(0<p\leq 1\) be fixed and let \(\N:=\Np\). Assume that \(\psi,\phi \in L^2\) satisfy 
		\begin{equation*}
			\lvert \widehat{\psi}^{(\N+1)}(\xi)\rvert \lesssim \begin{cases}
				\lvert \xi \rvert^\epsilon, & \lvert \xi \rvert \leq 1,\\
				\lvert \xi \rvert^{-\epsilon-\N-3/2}, & \lvert \xi \rvert \geq 1,
			\end{cases} \quad \displaystyle \lvert \widehat{\phi}^{(\N+1)}(\xi) \rvert \lesssim \begin{cases}
				\lvert \xi \rvert^\epsilon, & \lvert \xi\rvert \leq 1,\\
				\lvert\xi\rvert^{-\epsilon-2\N-5/2}, & \lvert \xi \rvert \geq 1
			\end{cases}
		\end{equation*}
		for some \(\epsilon>0\) and the following additional conditions:
		\begin{enumerate}[(a)]
			\item \(\widehat{\psi}\in W^{\N +3,1}\), and \(\xi^\alpha \widehat{\phi} \in W^{\N +3,1}\) for \(\alpha=0,1,\cdots, \N +1\);
			\item \(\widehat{\psi}\in W^{\N +3,2},\widehat{\phi}\in W^{\N+1,2}\), and \(\xi^\alpha \widehat{\phi} \in W^{\alpha+2,2}\) for \(\alpha=1,\cdots, \N +1\);
			\item \(\displaystyle \int \psi(x)\, x^\beta dx  = \int \phi(x) x^\beta dx= 0\), \(\forall \, 0 \leq \beta \leq \N.\)
		\end{enumerate}
		Then the wavelet frame operator \(U=U_{\psi,\phi}\) is a Calder\'{o}n-Zygmund operator, and has a bounded extension to \(H^p\).
		\label{Theorem:Bdd of U}
	\end{theorem}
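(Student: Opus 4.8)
The plan is to verify, one after another, the three defining properties of a Calder\'{o}n-Zygmund operator associated with $p$ in Definition~\ref{def: Calderon-Zygmund} --- $L^2$-boundedness together with the integral representation, the kernel smoothness bound~\eqref{CZCond2}, and the moment-annihilation property~\eqref{CZCond3} --- and then to deduce the bounded extension to $H^p$ from the atomic/molecular description of $H^p$.

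First I would reduce the $L^2$-boundedness and the integral representation $U_{\psi,\phi}f(x)=\int K_{\psi,\phi}(x,y)f(y)\,dy$ to Theorem~\ref{Result: L2boundedness}, by checking that hypotheses (a)--(c) imply its assumptions~\eqref{eq:BLdecay} and $\widehat\psi,\widehat\phi\in W^{2,1}$. The vanishing moments in (c) force $\widehat\psi$ and $\widehat\phi$ to vanish to order $\N+1$ at the origin, so integrating the given bounds on $\widehat\psi^{(\N+1)}$ and $\widehat\phi^{(\N+1)}$ a total of $\N+1$ times outward from $0$ yields $|\widehat\psi(\xi)|,|\widehat\phi(\xi)|\lesssim|\xi|^{\epsilon}$ on $|\xi|\le1$; integrating the same bounds $\N+1$ times inward from $+\infty$ (the lower-order derivatives vanish at infinity by the Sobolev hypotheses) yields $|\widehat\psi(\xi)|,|\widehat\phi(\xi)|\lesssim|\xi|^{-\epsilon-1/2}$ on $|\xi|\ge1$, exactly~\eqref{eq:BLdecay}. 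Since $W^{\N+3,1}\subseteq W^{2,1}$, the $\alpha=0$ cases of (a) give $\widehat\psi,\widehat\phi\in W^{2,1}$, so Theorem~\ref{Result: L2boundedness} applies and produces both the $L^2$-bound and the kernel $K_{\psi,\phi}$ of~\eqref{WaveletFrameKernel}.

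The heart of the proof, and the step I expect to be the main obstacle, is the smoothness estimate~\eqref{CZCond2}: controlling $|\partial_y^\alpha K_{\psi,\phi}(x,y)|$ by $\gCZ\,|x-y|^{-\alpha-1}$ for every $0\le\alpha\le\N+1$. I would first convert the frequency-side hypotheses (a)--(b) into spatial decay: $\widehat\psi\in W^{\N+3,1}$ and $\xi^\alpha\widehat\phi\in W^{\N+3,1}$ give, by Fourier inversion and the $\widehat{x^m g}$ identities, bounds of the form $|\psi(x)|,|\phi^{(\alpha)}(x)|\lesssim(1+|x|)^{-\N-3}$ for $0\le\alpha\le\N+1$. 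Differentiating~\eqref{WaveletFrameKernel} term by term gives $\partial_y^\alpha K_{\psi,\phi}(x,y)=\sum_{j}A^{j(\alpha+1)}\sum_{k}\psi(A^jx-k)\,\overline{\phi^{(\alpha)}(A^jy-k)}$, and writing $t=x-y$ I would split the $j$-sum at the critical scale $A^{j}\sim|t|^{-1}$. On the fine scales $A^j|t|\ge1$ the two bumps $\psi(A^jx-\cdot)$ and $\phi^{(\alpha)}(A^jy-\cdot)$ are centered a distance $A^j|t|$ apart, so their decay makes the $k$-sum $\lesssim(A^j|t|)^{-\N-3}$; since $\alpha+1\le\N+2<\N+3$, the resulting $j$-series is geometrically dominated by its smallest scale and sums to $\lesssim|t|^{-\alpha-1}$. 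On the coarse scales $A^j|t|\le1$ the $k$-sum is merely $O(1)$, but the prefactor $A^{j(\alpha+1)}$ decays as $j\to-\infty$, and summing the geometric series again gives $\lesssim|t|^{-\alpha-1}$. The delicate part is making these two-regime estimates rigorous and uniform while tracking the exact Sobolev orders demanded in (a)--(b), which are calibrated precisely so that $\N+3$ beats $\alpha+1$ throughout.

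Finally, for the moment condition~\eqref{CZCond3} I would note that each dilate--translate inherits the vanishing moments of $\psi$: a change of variables shows $\int\psi_{jk}(x)\,x^\alpha\,dx=A^{-j/2-j\alpha}\int\psi(u)(u+k)^\alpha\,du=0$ for $0\le\alpha\le\N$, because $(u+k)^\alpha$ has degree $\le\N$ and $\psi$ annihilates such polynomials by (c). Since $U_{\psi,\phi}f$ is a superposition of the $\psi_{jk}$, its moments up to order $\N$ vanish once absolute convergence is justified --- and for compactly supported $f$ with vanishing moments up to order $\N$ the kernel estimate~\eqref{CZCond2} forces $U_{\psi,\phi}f(x)$ to decay like $|x|^{-\N-2}$ at infinity, making $\int U_{\psi,\phi}f(x)\,x^\alpha\,dx$ absolutely convergent. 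With all of Definition~\ref{def: Calderon-Zygmund} verified, $U_{\psi,\phi}$ is a Calder\'{o}n-Zygmund operator associated with $p$. To extend it boundedly to $H^p$, $0<p\le1$, I would invoke the atomic decomposition and check that $U_{\psi,\phi}$ maps each $(p,2)$-atom to a fixed multiple of an $H^p$-molecule: the $L^2$-bound controls $U_{\psi,\phi}a$ near the atom's cube, the order-$(\N+1)$ smoothness~\eqref{CZCond2} supplies the off-diagonal decay, and~\eqref{CZCond3} supplies the molecule's vanishing moments up to order $\N=\Np$. As molecules lie in $H^p$ with uniformly bounded quasinorm, the $p$-subadditivity of $\|\cdot\|_{H^p}^p$ then gives $\|U_{\psi,\phi}f\|_{H^p}\lesssim\|f\|_{H^p}$.
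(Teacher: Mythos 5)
Your verification of the vanishing moment condition \eqref{CZCond3} contains a genuine gap, and it is precisely the step the paper spends most of Section~\ref{subsec: Necessities for Thm5} on. You correctly observe that each \(\psi_{jk}\) inherits the vanishing moments of \(\psi\), and your decay argument showing \(x^\alpha U f\in L^1\) for \(0\le\alpha\le\N\) (via the Taylor-remainder estimate against \eqref{CZCond2}) is fine. But these two facts do \emph{not} yield \(\int Uf(x)\,x^\alpha dx=0\): the series \(\sum_{j,k}\langle f,\phi_{jk}\rangle\psi_{jk}\) converges to \(Uf\) only in \(L^2\), and the functional \(g\mapsto\int g(x)\,x^\alpha dx\) is not continuous in the \(L^2\) topology --- for instance \(g_n=n^{-1}\chi_{[n,2n]}\) satisfies \(\|g_n\|_{L^2}=n^{-1/2}\to 0\) while \(\int g_n\,dx=1\) for every \(n\), so zero moments of all partial sums can ``leak to infinity'' in the limit. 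Knowing a posteriori that the limit happens to lie in \(L^1(|x|^\alpha dx)\) does not repair the interchange of sum and integral; what is needed is convergence of the series in a topology in which the moment functionals \emph{are} continuous. This is exactly what the paper constructs: Proposition~\ref{prop:AnalysisVanishingBdd} shows the analysis operator maps the weighted space \(K^{\N+1,2}_*\) boundedly into \(l^{\N+1,2}\), and Proposition~\ref{prop:SynthesisVanishingBdd} shows the synthesis operator maps \(l^{\N+1,2}\) back into \(K^{\N+1,2}_*\) (both resting on the generalized Hardy inequality, Lemma~\ref{lemma:Generalized Hardy's Inequality}); hence \(Uf=(s\circ t)f\in K^{\N+1,2}_*\), where integration against \(x^\alpha\), \(\alpha\le\N\), is a bounded functional by Cauchy--Schwarz and the vanishing of the moments survives the limit. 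Your proposal has no substitute for this weighted-space machinery, so \eqref{CZCond3} is not established.

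The remaining steps are sound, though two differ from the paper in ways worth noting. For the smoothness bound \eqref{CZCond2} you argue directly in the spatial domain, splitting the scale sum at \(A^j\sim|x-y|^{-1}\); this is essentially the paper's auxiliary Lemma~\ref{lemma:AbsoluteConvergenceEstimationK}, and it does suffice for the qualitative statement of Theorem~\ref{Theorem:Bdd of U}. The paper instead runs the periodization/Fourier-coefficient argument of Proposition~\ref{prop:estimation of alpha partial of K_0} combined with Lemma~\ref{lemma:appendix1} because it wants the \emph{explicit} constant \(\CZ_\alpha(\psi,\phi)=\kappa_\alpha(A)\sigma_\alpha^{1/(\alpha+2)}\tau_\alpha^{(\alpha+1)/(\alpha+2)}\), which is indispensable later for Theorem~\ref{Theorem:Invertibility of U} (where \(\Mp<1\) must be checkable) and for the Mexican hat example; your route forfeits that tractability. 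Finally, in the \(H^p\) step you pass from ``atoms map to uniformly bounded molecules'' directly to boundedness on \(H^p\) via \(p\)-subadditivity; as in the paper's proof of Theorem~\ref{thm:CZO has Hp bddness}, one must additionally run the density argument on compactly supported \(L^2\) functions with vanishing moments (uniform boundedness on atoms alone does not in general imply \(H^p\) boundedness), although for an \(L^2\)-bounded linear operator this is a repairable, minor omission compared with the moment-condition gap above.
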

	
	
	
	Compared with Theorem~\ref{Result: L2boundedness}, the above theorem has the moment conditions for the functions \(\psi\) and \(\phi\), which are typical for the Hardy spaces \(H^p\), \(0<p \leq 1\). In fact, the above conditions of Theorem~\ref{Theorem:Bdd of U} are all dedicated to show that the wavelet frame operator \(U\) is a Calder\'{o}n-Zygmund operator associated with~\(p\) (c.f., Theorem~\ref{thm:CZ kernel smoothness} and \ref{thm:CZ kernel vanishing}). 
	The bounded extension to \(H^p\) of the frame operator can be seen by simply showing that the Calder\'{o}n-Zygmund operator has a bounded extension.

	

	In the next theorem, we use the value \(\Mp\) which will be defined and explained later in Section~\ref{sec: Proof of Theorem2,3}. This theorem asserts that if this value is less than \(1\), then the wavelet frame operator~\(U\) is bijective on \(H^p\). 

	\begin{theorem}[Bijectivity of Wavelet Frame Operator on $H^p$]
		Let \(0<p \leq 1\) be fixed. Assume that \(\psi, \phi\) satisfy all the conditions in Theorem \ref{Theorem:Bdd of U}. Assume also that \(\psi^*, \phi^*\) satisfy all the conditions in Theorem \ref{Theorem:Bdd of U} and they are exact duals in \(L^2\).
		If \(\Mp<1\), then the wavelet frame operator \(U\) defined on \(L^2\) has a bounded linear bijective extension to \(H^p\).
		\label{Theorem:Invertibility of U}
	\end{theorem}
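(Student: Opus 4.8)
The plan is to realize the $H^p$-extension of $U=U_{\psi,\phi}$ as a small perturbation of the identity and then to invert it by a Neumann series adapted to the quasi-Banach setting. First I would exploit the hypothesis that $\psi^*,\phi^*$ are exact duals in $L^2$: this means $U_{\psi^*,\phi^*}=Id$ on $L^2$. Since $\psi^*,\phi^*$ also satisfy the hypotheses of Theorem~\ref{Theorem:Bdd of U}, the operator $U_{\psi^*,\phi^*}$ is a Calder\'on--Zygmund operator with a bounded extension to $H^p$; because it agrees with the identity on $L^2$ and $L^2\cap H^p$ is dense in $H^p$, its bounded extension is forced to be $Id_{H^p}$. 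Likewise $U=U_{\psi,\phi}$ has a bounded extension $\widetilde U\colon H^p\to H^p$ by Theorem~\ref{Theorem:Bdd of U}. Thus it suffices to show that the defect operator $B:=Id_{H^p}-\widetilde U$ has operator quasi-norm strictly less than $1$.

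Next I would control $B$. On $L^2$ one has the algebraic identity
\[
U_{\psi,\phi}-U_{\psi^*,\phi^*}=U_{\psi,\,\phi-\phi^*}+U_{\psi-\psi^*,\,\phi^*},
\]
obtained by adding and subtracting $\sum_{j,k}\langle f,\phi^*_{jk}\rangle\psi_{jk}$ and using the linearity of dilation and translation. Since each of the four functions satisfies the conditions of Theorem~\ref{Theorem:Bdd of U}, both wavelet frame operators on the right are again Calder\'on--Zygmund operators with bounded $H^p$ extensions, and hence so is their sum; by uniqueness of bounded extensions this sum coincides with $\widetilde U-Id_{H^p}=-B$. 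The quantity $\Mp$ introduced in Section~\ref{sec: Proof of Theorem2,3} is precisely a tractable bound for the $H^p\to H^p$ operator quasi-norm of this difference, so that $\|B\|_{H^p\to H^p}\le \Mp<1$.

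Finally I would invert $\widetilde U=Id_{H^p}-B$ via the Neumann series $\sum_{n\ge0}B^n$. The one point requiring care for $0<p<1$ is that $H^p$ is only a $p$-Banach space, so I would replace the ordinary triangle inequality by the $p$-triangle inequality $\|\sum_n T_n\|_{H^p\to H^p}^p\le\sum_n\|T_n\|_{H^p\to H^p}^p$, and combine it with submultiplicativity $\|B^n\|\le\|B\|^n$ to obtain $\sum_{n\ge0}\|B^n\|^p\le(1-\|B\|^p)^{-1}<\infty$ (using $\|B\|^p<1$). This makes the series absolutely convergent in the complete operator space, so it defines a bounded operator $S$ with $\widetilde U S=S\widetilde U=Id_{H^p}$; hence $\widetilde U$ is a bounded linear bijection of $H^p$, with bounded inverse $S$.

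The main obstacle is the middle step: verifying that $\Mp$ genuinely bounds $\|B\|_{H^p\to H^p}$. This is where the Calder\'on--Zygmund structure must be used quantitatively. One reduces the operator quasi-norm to the action on $H^p$-atoms, via the atomic characterization of $H^p$, and then estimates $\|Ba\|_{H^p}$ uniformly over atoms $a$ using the kernel decay~\eqref{CZCond2} up to order $\Np+1$ together with the vanishing-moment conditions~\eqref{CZCond3} up to order $\Np$. Converting these atomic estimates into the explicit computable constant $\Mp$, so that the hypothesis $\Mp<1$ can actually be checked in practice (for instance for the Mexican hat synthesizer), is the technically heaviest part and is carried out in Section~\ref{sec: Proof of Theorem2,3}.
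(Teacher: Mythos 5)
Your proposal is correct and takes essentially the same route as the paper: write $U-Id$ as a sum of two wavelet frame operators generated by difference functions, bound each summand on $H^p$ via Theorem~\ref{Theorem:Bdd of U} (the infimum of these explicit bounds over the free parameters being exactly how the paper defines $\Mp$), and conclude bijectivity from $\left\|U-Id\right\|_{H^p\to H^p}\le \Mp<1$ by a Neumann series. The only deviations are inessential: the paper adds and subtracts the cross term $s_{\psi^*}\circ t_{\phi}$, giving $U-Id=U_{\psi-\psi^*,\phi}+U_{\psi^*,\phi-\phi^*}$ (the decomposition from which $\Mp$ is actually defined), whereas you subtract $s_{\psi}\circ t_{\phi^*}$ (a symmetric variant whose bound would define the analogous constant with the roles of the cross terms swapped), and your explicit $p$-triangle-inequality treatment of the Neumann series in the quasi-Banach setting spells out a point the paper delegates to its cited references.
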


	We will show that the wavelet frame operator \(U = U_{\psi,\phi}\) satisfies \(\left\|U_{\psi,\phi}-Id\right\|_{H^p\to H^p}<1\) from \(\Mp<1\) together with other assumptions in Theorem~\ref{Theorem:Invertibility of U}. That is, the functions \(\psi\) and \(\phi\) are  approximate duals in \(H^p\). These approximate duals \(\psi\) and \(\phi\) in \(H^p\) are used in the next theorem to provide a wavelet series expansion for any function \(f\in H^p\) with an additional regularity condition.
	Below, \(\dot{f}^{02}_p\) denotes the (non-dyadic) Triebel-Lizorkin sequence space defined in Definition \ref{def:TLspace,sequenceSpace}\ref{def:TLSequenceSpace}.

	\begin{theorem}[Wavelet Series Expansion in $H^p$]
		Let \(0<p \leq 1\) be fixed. Assume that \(\psi, \phi, \psi^*, \phi^*\) satisfy all the conditions in Theorem~\ref{Theorem:Invertibility of U} including \(\Mp<1\). Further assume that \(\xi\widehat{\psi} \in W^{\N+2,1}\). Then for any \(f\in H^p\), there exist coefficients \(\{c_{jk}\}_{j,k\in\mathbb{Z}}\in \dot{f}^{02}_p\) such that 
		\[f = \sum\limits_{j,k\in\mathbb{Z}} c_{jk} \psi_{jk}.\]
		Moreover, the coefficient \(c_{jk}\) can be taken as \(\left< U^{-1} f , \phi_{jk} \right>\), where \(U^{-1}\) is the inverse of the wavelet frame operator \(U\) on \(H^p\).
		\label{Corollary}
	\end{theorem}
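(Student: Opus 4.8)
The plan is to exhibit $f$ as $U(U^{-1}f)$ and to recognize the wavelet frame operator $U=U_{\psi,\phi}$ as the composition of a bounded analysis map and a bounded synthesis map between $H^p$ and the Triebel--Lizorkin sequence space $\dot{f}^{02}_p$. Concretely, set $g:=U^{-1}f$; by the bijectivity established in Theorem~\ref{Theorem:Invertibility of U}, we have $g\in H^p$ and $Ug=f$. If I can show that $Ug=\sum_{j,k}\langle g,\phi_{jk}\rangle\,\psi_{jk}$ as an identity in $H^p$, with the coefficient sequence lying in $\dot{f}^{02}_p$, then taking $c_{jk}:=\langle g,\phi_{jk}\rangle=\langle U^{-1}f,\phi_{jk}\rangle$ finishes the proof.

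First I would introduce the analysis operator $S_\phi:g\mapsto\{\langle g,\phi_{jk}\rangle\}_{j,k}$ and prove it is bounded from $H^p$ into $\dot{f}^{02}_p$. Using the equivalence $H^p=\dot{F}^{02}_p$ recalled in the introduction and the generalized Frazier--Jawerth theory, this reduces to checking that $\phi$ is an admissible analyzing function: the smoothness, decay, and vanishing-moment hypotheses (a)--(c) of Theorem~\ref{Theorem:Bdd of U}, inherited through Theorem~\ref{Theorem:Invertibility of U}, are exactly what guarantee the $\varphi$-transform coefficient bound $\|S_\phi g\|_{\dot{f}^{02}_p}\lesssim\|g\|_{\dot{F}^{02}_p}$. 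In particular the pairing $\langle g,\phi_{jk}\rangle$ is well defined for $g\in H^p$ via the duality between $H^p$ and smooth atoms, and $S_\phi g\in\dot{f}^{02}_p$.

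Next I would introduce the synthesis operator $T_\psi:\{s_{jk}\}\mapsto\sum_{j,k}s_{jk}\psi_{jk}$ and prove it is bounded from $\dot{f}^{02}_p$ into $\dot{F}^{02}_p=H^p$. This is the inverse $\varphi$-transform bound of Frazier--Jawerth, and it requires $\psi$ to be an admissible synthesizing molecule of sufficient smoothness and decay; this is precisely where the extra hypothesis $\xi\widehat{\psi}\in W^{\N+2,1}$ enters, translating (via the Fourier transform) into the molecular regularity and decay of $\psi$ needed for the synthesis estimate. Boundedness of $T_\psi$ yields $H^p$-convergence of the series $\sum_{j,k}s_{jk}\psi_{jk}$ for any $\{s_{jk}\}\in\dot{f}^{02}_p$, so in particular $T_\psi S_\phi g$ is a well-defined element of $H^p$.

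It then remains to identify $T_\psi S_\phi g$ with $Ug=f$, which I regard as the main obstacle, since $U$ is a priori only the $H^p$-extension furnished by Theorem~\ref{Theorem:Bdd of U}, whereas $T_\psi S_\phi$ is the explicit series map. I would settle this by a density argument: on the dense subspace $L^2\cap H^p$ the defining formula of $U$ in Definition~\ref{def:wavelet operator U} gives $Ug=\sum_{j,k}\langle g,\phi_{jk}\rangle\psi_{jk}=T_\psi S_\phi g$ with convergence in $L^2$, and both $U$ (by Theorem~\ref{Theorem:Bdd of U}) and $T_\psi S_\phi$ (by the two boundedness steps above) are continuous on $H^p$; since they agree on a dense set, they agree on all of $H^p$. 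The only delicate point is the consistency of the pairing $\langle g,\phi_{jk}\rangle$ for $g\in H^p$ with its $L^2$ counterpart under this extension, which follows from the continuity of $S_\phi$. Combining the steps, $f=Ug=T_\psi S_\phi g=\sum_{j,k}\langle U^{-1}f,\phi_{jk}\rangle\,\psi_{jk}$ with $\{c_{jk}\}=\{\langle U^{-1}f,\phi_{jk}\rangle\}=S_\phi g\in\dot{f}^{02}_p$, as desired.
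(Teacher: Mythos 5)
Your proposal is correct and takes essentially the same route as the paper: bijectivity of $U$ on $H^p$ (Theorem~\ref{Theorem:Invertibility of U}), boundedness of the analysis operator $t_\phi$ and the synthesis operator $s_\psi$ between $\dot{F}^{02}_p$ and $\dot{f}^{02}_p$ (Theorem~\ref{thm:Extension of S,T to TL}, where the extra hypothesis $\xi\widehat{\psi}\in W^{\N+2,1}$ enters exactly as you say), the equivalence $H^p\cong\dot{F}^{02}_p$ (Theorem~\ref{thm:HardyAadicTLEquiv}), and the identification $f=U(U^{-1}f)=s_\psi(t_\phi(U^{-1}f))$. The only difference is in how that last identification is finished: the paper makes explicit that $s_\psi(t_\phi(U^{-1}f))$ lives a priori in $\mathcal{S}'/\mathcal{P}$ and kills the polynomial ambiguity via the uniqueness clause of Theorem~\ref{thm:HardyAadicTLEquiv}(b), whereas you use a density/continuity argument, which works but is best run on the dense class $\Theta_{\N}$ of compactly supported $L^2$ functions with vanishing moments (where the $H^p$ extension of $U$ is actually given by the defining series) rather than on all of $L^2\cap H^p$, and should likewise note that the $L^2$ and $\dot{F}^{02}_p$ limits of the partial sums agree only modulo polynomials before the unique $H^p$ representative is selected.
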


	Let us set some notations used throughout the paper. We use \(\mathbb{N}_0\) to denote the set of non-negative integers. That is, \(\mathbb{N}_0 := \mathbb{N} \cup \{0\}\). We use the notation \(f \lesssim g\) to denote \(f \leq C g\) for some constant \(C>0\). 
	In addition, for \(\alpha\in\mathbb{N}\), we denote the \(\alpha\)-th derivative of \(f\) by \(f^{(\alpha)}\), and the \(\alpha\)-th partial derivative of \(K(x,y)\) with respect to \(x\) by \(\partial^{\alpha}_x K(x,y)\). For \(\alpha=1\), we use both \(f^{(1)}\) and \(f'\), and both \(\partial^1_x K(x,y)\) and \(\partial_x K(x,y)\). By \(f^{(0)}\) and \(\partial_x^0 K(x,y)\), we refer to \(f\) and \(K(x,y)\), respectively, so that the expression``\(0\)-th derivative'' can be used. For \(f,g\in L^2\), \( \left< f, g \right> := \int_\mathbb{R} f \overline{g} dx\). The Fourier transform is normalized as, for \(f\in L^1 \cap L^2\),
	\[\hat{f}(\xi) = \int_\mathbb{R} f(x) e^{-2\pi ix \xi}dx,\]
	and the inverse Fourier transform of \(f\) is denoted by \(\widecheck{f}\). 
	
	\medskip
	
	The rest of the paper is organized as follows. In Section \ref{sec:WFO is CZO}, we find the conditions on \(\psi,\phi \in L^2\) with respect to the frequency domain for the wavelet frame operator \(U = U_{\psi,\phi}\) to be a Calder\'{o}n-Zygmund operator with a computable constant. In Section \ref{sec:Bdd of U on Hardy}, we define the Hardy space \(H^p\) using atomic decomposition, and show that a Calder\'{o}n-Zygmund operator can be extended to the Hardy space \(H^p\) for each \(0<p \leq 1\). An explicit bound of the operator norm is found in this section. In Section~\ref{sec: Proof of Theorem2,3}, we prove Theorem~\ref{Theorem:Bdd of U} and \ref{Theorem:Invertibility of U}. In Section \ref{sec:proof of corollary}, we prove Theorem~\ref{Corollary}. In Section~\ref{sec:Example}, we give an illustration of our main theorems using the Mexican hat function as an example for the case when \(1/2<p \leq 1\). Finally, some more technical lemmas, example, and proofs are placed in Appendix~\ref{appx : lemmas needed in Sec2}, \ref{appx : example of Lemma9} and \ref{appx: Necessities for Thm6}.
	
	\section{Wavelet Frame Operator as a Calder\'{o}n-Zygmund Operator}
	\label{sec:WFO is CZO}
	
	\subsection{Main Results}
	\label{subsec:Main Result on CZO}
	
	In the next two theorems, we provide sufficient conditions on \(\psi\) and \(\phi\) for the wavelet frame kernel \(K_{\psi,\phi}\) in (\ref{WaveletFrameKernel}) to satisfy the smoothness condition (\ref{CZCond2}) and the vanishing moment condition (\ref{CZCond3}) of a Calder\'{o}n-Zygmund operator. This means that, by Theorem~\ref{Result: L2boundedness}, the wavelet frame operator \(U\) is a Calder\'{o}n-Zygmund operator defined with the kernel \(K_{\psi,\phi}\). The case when \(p=1\) has already been studied in Proposition 4.5 and Theorem 4.6 of \cite{[BuiLaug]WFBonLH}, and the sufficient conditions on \(\psi\) and \(\phi\) for \(p=1\) correspond to our conditions below with \(\Np=0\). Proofs of the following theorems are presented in Section~\ref{subsec: Necessities for Thm4} and \ref{subsec: Necessities for Thm5}. 
	
	Here and below, we use the two quantities defined as, for \(\alpha\in \mathbb{N}_0\),
		\begin{equation}
			\label{eq:sigma_alpha}
			\sigma_\alpha(\psi,\phi) := (2\pi)^{\alpha} \sum\limits_{l\in\mathbb{Z}} \left\| \xi^\alpha\overline{\widehat{\phi}(\cdot)} \widehat{\psi}\left(\cdot+l\right)\right\|_{L^1},
		\end{equation}
		\begin{equation}
			\label{eq:tau_alpha}
			\tau_\alpha(\psi,\phi) := \frac{1}{4\pi^2} \sum\limits_{l\in\mathbb{Z}} \left\| \left( \xi^\alpha \overline{\widehat{\phi}(\cdot)} \widehat{\psi}\left(\cdot+l\right) \right)^{ (\alpha+2) } \right\|_{L^1}.
		\end{equation}
	
	\begin{theorem}
		Let \(0<p \leq 1\) be fixed and let \(\N:=\Np\). Also, let \(A>1\) be a dilation factor. Assume that \(\psi, \phi \in L^2\) satisfy the following conditions: 
		\begin{enumerate}[(a)]
			\item \(\widehat{\psi}\in W^{\N+3,1}\) and \(\xi^\alpha \widehat{\phi} \in W^{\N +3,1}\) for \(\alpha=0,1, \cdots, \N+1\),
			
			\item \(\widehat{\psi}\in W^{ \N +3,2}\) and \(\xi^\alpha \widehat{\phi} \in W^{\alpha+2,2}\) for \(\alpha=0,1, \cdots, \N+1\).
		\end{enumerate}
		We also assume \(\sigma_\alpha(\psi,\phi)<\infty\) and \(\tau_\alpha(\psi,\phi)<\infty\) for all \(0 \leq \alpha \leq \N +1\). Then for any \(x,y\in \mathbb{R}\) such that \(x \neq y\), 
		every \(\alpha\)-th partial derivative of wavelet frame kernel \(K_{\psi,\phi}\) defined in (\ref{WaveletFrameKernel}) is bounded as follows: 
		\[\lvert \partial^\alpha_y K_{\psi,\phi}(x,y) \rvert \leq \frac{1}{\lvert x-y \rvert^{\alpha+1} }\left(\max\limits_{0 \leq \alpha \leq  \N+1} \CZ_{\alpha}(\psi,\phi)\right), \quad \forall\, 0 \leq \alpha \leq \N+1,\]		
		where 
		\begin{equation}
			\CZ_{\alpha}(\psi,\phi) := \kappa_\alpha(A) \sigma_\alpha(\psi,\phi)^{1/(\alpha+2)} \tau_\alpha(\psi,\phi)^{(\alpha+1)/(\alpha+2)}
			\label{eq:CZ_2,alpha}
		\end{equation}
		with a constant \(\kappa_\alpha(A) = A(2A^\alpha + \sum_{k=0}^{\alpha-1} A^k)/(A^{\alpha+1} -1)\) (here, \(\sum_{k=0}^{-1} A^k:=0\)).
		\label{thm:CZ kernel smoothness}
	\end{theorem}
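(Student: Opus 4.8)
\emph{Proof sketch.} The plan is to split the kernel by scale, writing $K_{\psi,\phi} = \sum_{j\in\mathbb{Z}} K_j$ with the single-scale kernel
\[ K_j(x,y) := A^j\sum_{k\in\mathbb{Z}}\psi(A^j x - k)\,\overline{\phi(A^j y - k)}, \]
to bound $|\partial_y^\alpha K_j|$ at each scale by two competing estimates, and then to sum over $j$ after balancing them. Differentiating termwise (a step justified below) gives
\[ \partial_y^\alpha K_j(x,y) = A^{j(\alpha+1)}\sum_{k\in\mathbb{Z}}\psi(A^j x - k)\,\overline{\phi^{(\alpha)}(A^j y - k)}. \]
First I would convert the $k$-sum to the frequency side. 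Applying the Poisson summation formula to the function $u\mapsto \psi(A^j x - u)\overline{\phi^{(\alpha)}(A^j y - u)}$ yields
\[ \sum_{k}\psi(s-k)\overline{\phi^{(\alpha)}(t-k)} = \sum_{l\in\mathbb{Z}} e^{-2\pi i t l}\, I_l(s-t),\qquad I_l(R) := \int_{\mathbb{R}}\widehat{\psi}(\xi)\,\overline{\widehat{\phi^{(\alpha)}}(\xi+l)}\, e^{2\pi i R\xi}\, d\xi, \]
where $s=A^j x$, $t=A^j y$, $R=A^j(x-y)$ and $\widehat{\phi^{(\alpha)}}(\eta)=(2\pi i\eta)^\alpha\widehat{\phi}(\eta)$. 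The $L^1$- and $L^2$-Sobolev hypotheses (a)–(b), together with the assumed finiteness of $\sigma_\alpha$ and $\tau_\alpha$, are exactly what legitimize this identity, the termwise differentiation, and the interchanges of the $k$- and $l$-sums with the integral, for each $0\le\alpha\le\N+1$.

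Next I would estimate $I_l(R)$ in two ways. Taking absolute values gives $|I_l(R)|\le \|\widehat{\psi}\,\overline{\widehat{\phi^{(\alpha)}}(\cdot+l)}\|_{L^1}$, while integrating by parts $\alpha+2$ times (the boundary terms vanishing by the $W^{\alpha+2,1}$ control on the product) gives $|I_l(R)|\le (2\pi|R|)^{-(\alpha+2)}\|(\widehat{\psi}\,\overline{\widehat{\phi^{(\alpha)}}(\cdot+l)})^{(\alpha+2)}\|_{L^1}$. Summing over $l$ (using $|e^{-2\pi i tl}|=1$) and reindexing by $\xi\mapsto\xi-l$, $l\mapsto-l$ to match the definitions, the two sums collapse \emph{exactly} to $\sigma_\alpha(\psi,\phi)$ and $\tau_\alpha(\psi,\phi)$; here the normalizing factors $(2\pi)^\alpha$ and $(2\pi)^{-2}$ built into $\sigma_\alpha,\tau_\alpha$ absorb precisely the constants coming from $\widehat{\phi^{(\alpha)}}=(2\pi i\,\cdot)^\alpha\widehat{\phi}$ and from the integration by parts. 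This produces the scalewise bound
\[ |\partial_y^\alpha K_j(x,y)| \le A^{j(\alpha+1)}\min\!\left(\sigma_\alpha(\psi,\phi),\ \frac{\tau_\alpha(\psi,\phi)}{(A^j|x-y|)^{\alpha+2}}\right). \]

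Finally I would sum over $j$. The two estimates cross near the scale $A^{j}\approx (\tau_\alpha/\sigma_\alpha)^{1/(\alpha+2)}|x-y|^{-1}$; using the first bound for scales below the crossover and the second above, each piece is a geometric series in $A^j$. Summing the two series and substituting the crossover scale yields the geometric mean $\sigma_\alpha^{1/(\alpha+2)}\tau_\alpha^{(\alpha+1)/(\alpha+2)}$ together with the factor $|x-y|^{-(\alpha+1)}$, and the accumulated geometric-series constants (with the split taken at the integer scale nearest the crossover) assemble into $\kappa_\alpha(A)$. This gives $|\partial_y^\alpha K_{\psi,\phi}(x,y)|\le \CZ_\alpha(\psi,\phi)\,|x-y|^{-(\alpha+1)}$, and bounding $\CZ_\alpha$ by $\max_{0\le\alpha\le\N+1}\CZ_\alpha(\psi,\phi)$ yields the claim.

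I expect the main obstacles to be twofold. The first is rigorously justifying the Poisson summation and the passage of $\partial_y^\alpha$ through the double sum (and the accompanying $l$-sum/integral interchanges) purely from the stated Sobolev hypotheses, since these ensure the required decay of $\psi,\phi$ and of their transforms and the vanishing of the integration-by-parts boundary terms. The second, and more delicate for the purposes of this paper, is carrying out the $j$-summation carefully enough to recover the \emph{exact} constant $\kappa_\alpha(A)$ rather than a qualitative $O(1)$ factor, because the explicit, computable constant is what later makes the condition $\Mp<1$ checkable.
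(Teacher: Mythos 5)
Your proposal is correct and follows essentially the same route as the paper: your Poisson-summation step is exactly the paper's Proposition~\ref{prop:estimation of alpha partial of K_0} (which proves that identity by periodizing $\partial_y^\alpha K_0(x+t,y+t)$ and summing its Fourier coefficients, yielding the same two competing bounds $\sigma_\alpha$ and $\tau_\alpha/|x-y|^{\alpha+2}$), and your crossover-scale summation over $j$ with the exact constant $\kappa_\alpha(A)$ is precisely the content of the paper's Lemma~\ref{lemma:appendix1}. The obstacles you flag (justifying termwise differentiation and the interchange of sums, and tracking the explicit constant) are exactly what the paper's auxiliary Lemmas~\ref{lemma:W^{n,1}}, \ref{lemma:AbsolutelyConvergenceContinuityK0} and \ref{lemma:AbsoluteConvergenceEstimationK} handle.
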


	\begin{theorem}
		Let \(0<p \leq 1\) be fixed and let \(\N :=\Np\). Assume that \(\psi,\phi\in L^2\) satisfy 
		\begin{equation*}
			\lvert \widehat{\psi}^{(\N+1)}(\xi) \rvert \lesssim \begin{cases}\lvert \xi \rvert^\epsilon, & \lvert \xi \rvert \leq 1,\\ \lvert \xi \rvert^{-\epsilon-\N-3/2}, & \lvert \xi \rvert \geq 1,\end{cases} \quad \lvert \widehat{\phi}^{(\N+1)}(\xi) \rvert \lesssim \begin{cases}\lvert \xi \rvert ^\epsilon, & \lvert \xi \rvert \leq 1,\\ \lvert \xi \rvert^{-\epsilon-2\N-5/2}, & \lvert \xi\rvert \geq 1\end{cases}
		\end{equation*}
		for some \(\epsilon>0\), and the following additional conditions:
		\begin{enumerate}[(a)]
			\item \(\widehat{\psi}, \widehat{\phi} \in W^{\N+1,2}\),
			\item \( \int \psi(x) x^\alpha dx=  \int \phi(x) x^\alpha dx= 0\), \(\forall \,0 \leq \alpha \leq \N\).
		\end{enumerate}
		If \(f\in L^2\) has compact support and \(\int f(x) x^\beta dx = 0 \) for all \(0 \leq \beta \leq \N\), then the wavelet frame operator \(U\) satisfies the following vanishing moment condition:
		\begin{equation*}
			\int  Uf(x) x^\alpha dx= 0, \quad \forall \,0 \leq \alpha \leq \N.
		\end{equation*}
		\label{thm:CZ kernel vanishing}
	\end{theorem}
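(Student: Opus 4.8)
The plan is to transfer the vanishing‑moment identity to a statement about $\widehat{Uf}$ at the origin. First I would check that all the moments in play are well defined: since $\widehat{\psi},\widehat{\phi}\in W^{\N+1,2}$, one has $(1+|x|)^{\N+1}\psi,(1+|x|)^{\N+1}\phi\in L^2$, so a Cauchy–Schwarz estimate gives $x^{\alpha}\psi,x^{\alpha}\phi\in L^1$ for $0\le\alpha\le\N$. Consequently the moment hypothesis (b) is equivalent to $\widehat{\psi}^{(\alpha)}(0)=\widehat{\phi}^{(\alpha)}(0)=0$ for $0\le\alpha\le\N$, i.e. $\widehat{\psi}$ and $\widehat{\phi}$ vanish to order $\N+1$ at the origin. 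Because $f$ is compactly supported, $\widehat{f}$ is entire with all derivatives bounded, and its vanishing moments give $\widehat{f}^{(\alpha)}(0)=0$ for $0\le\alpha\le\N$, so $\widehat{f}(\xi)=O(|\xi|^{\N+1})$ near $0$. The goal then becomes to prove $\widehat{Uf}^{(\alpha)}(0)=0$ for $0\le\alpha\le\N$; once $x^{\alpha}Uf\in L^1$ is known, this is equivalent to the theorem via $\widehat{Uf}^{(\alpha)}(0)=(-2\pi i)^{\alpha}\int x^{\alpha}Uf\,dx$.

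Next I would periodize scale by scale. Writing $U=\sum_{j}U_{j}$ with $U_{j}f=\sum_{k}\langle f,\phi_{jk}\rangle\psi_{jk}$, a Poisson‑summation computation yields
\[
\widehat{Uf}(\xi)=\sum_{j,m\in\mathbb{Z}}\widehat{\psi}(A^{-j}\xi)\,\overline{\widehat{\phi}(A^{-j}\xi+m)}\,\widehat{f}(\xi+mA^{j}).
\]
As an algebraic cross‑check in physical space, the change of variables $u=A^{j}x-k$ and the binomial theorem give
\[
\int \psi_{jk}(x)\,x^{\alpha}\,dx=A^{-j(\alpha+1/2)}\sum_{\beta=0}^{\alpha}\binom{\alpha}{\beta}k^{\alpha-\beta}\int \psi(u)\,u^{\beta}\,du=0,
\]
every summand vanishing because $\beta\le\alpha\le\N$ and $\psi$ has vanishing moments up to order $\N$; thus each single term of the series for $Uf$ contributes zero, and the whole content of the theorem is the justification of exchanging integration against the unbounded weight $x^{\alpha}$ with the infinite sum.

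On the frequency side this exchange is transparent termwise: applying $\partial_{\xi}^{\alpha}$ to a single summand by the Leibniz rule and evaluating at $\xi=0$ produces, in every resulting term, a factor $A^{-j\beta}\widehat{\psi}^{(\beta)}(0)$ with $0\le\beta\le\alpha\le\N$, which is $0$ by the first paragraph. Hence every term of the differentiated series vanishes at the origin. This is exactly the general‑$\N$ analogue of the identity $\widehat{Uf}(0)=\widehat{\psi}(0)\sum_{j,m}\overline{\widehat{\phi}(m)}\,\widehat{f}(mA^{j})=0$ used for $p=1$ in \cite{[BuiLaug]WFBonLH}, where $\widehat{\psi}(0)=0$ already kills each summand.

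The principal obstacle is therefore analytic rather than algebraic: I must show that the double series and each of its $\xi$‑derivatives up to order $\N$ converge uniformly on a neighborhood of $0$, so that $\widehat{Uf}$ is $C^{\N}$ there and may be differentiated termwise. The fine scales $j\to+\infty$ are harmless, since $\widehat{\psi}(A^{-j}\xi)=O((A^{-j}|\xi|)^{\N+1})$ decays geometrically while the decay of $\widehat{\phi}$ at infinity makes each $m$‑sum absolutely convergent. The delicate region is the coarse scales $j\to-\infty$: there the bounds for $A^{-j\beta}\widehat{\psi}^{(\beta)}(A^{-j}\xi)$ and $A^{-j\beta}\widehat{\phi}^{(\beta)}(A^{-j}\xi)$, obtained by integrating the hypothesized decay of $\widehat{\psi}^{(\N+1)},\widehat{\phi}^{(\N+1)}$ down to $\beta\le\N$, produce a growth of order $|\xi|^{-\beta_{1}-\beta_{2}}$ as $\xi\to0$, and this is precisely where the vanishing of $\widehat{f}$ to order $\N+1$ at the origin (the moment hypothesis on $f$) is essential: the factor $\widehat{f}^{(\beta_3)}(\xi)=O(|\xi|^{\N+1-\beta_3})$ supplies the compensating power, giving a net bound $O(|\xi|^{\N+1-\alpha})$ that is uniformly controlled for $\alpha\le\N$; the $m\neq0$ terms are handled by the same mechanism together with the geometric decay $A^{j(\epsilon+1/2)}$ in $|j|$. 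I expect this coarse‑scale estimate to be the hardest step. Finally, the integrability $x^{\alpha}Uf\in L^{1}$ needed for the reduction follows from the kernel representation of Theorem~\ref{Result: L2boundedness} combined with the decay $|Uf(x)|\lesssim|x|^{-\N-2}$ for $x$ away from $\operatorname{supp}f$, obtained by subtracting the order‑$\N$ Taylor polynomial of $K_{\psi,\phi}(x,\cdot)$ and using the moments of $f$ (the required smoothness of the kernel being the content of Theorem~\ref{thm:CZ kernel smoothness}).
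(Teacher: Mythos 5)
Your frequency-side route is genuinely different from the paper's: you reduce the claim to \(\widehat{Uf}^{(\alpha)}(0)=0\) via a Poisson-summation expansion of \(\widehat{Uf}\) and termwise Leibniz differentiation, whereas the paper stays in physical space, showing under exactly the stated hypotheses that the analysis operator \(t_\phi\) maps the weighted space \(K^{\N+1,2}_*\) boundedly into \(l^{\N+1,2}\) and the synthesis operator \(s_\psi\) maps \(l^{\N+1,2}\) back into \(K^{\N+1,2}_*\) (Propositions~\ref{prop:SynthesisVanishingBdd} and~\ref{prop:AnalysisVanishingBdd}, resting on the generalized Hardy inequality, Lemma~\ref{lemma:Generalized Hardy's Inequality}); this yields \(Uf=(s_\psi\circ t_\phi)f\in K^{\N+1,2}_*\), hence \(x^{\N+1}Uf\in L^2\), hence \(x^\alpha Uf\in L^1\) for \(\alpha\le\N\), and the moments then vanish by a Fubini interchange using only the moments of \(\psi\). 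Your algebraic mechanism (each term killed at \(\xi=0\) by \(\widehat{\psi}^{(\beta)}(0)=0\)) is sound and is indeed the higher-order analogue of the \(p=1\) identity.

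The genuine gap is in your final step, and it is not a removable formality. Your reduction requires \(x^\alpha Uf\in L^1\) --- smoothness of \(\widehat{Uf}\) near the origin does not imply integrability of the moments, so this cannot be bypassed --- and your only argument for it invokes the kernel representation of Theorem~\ref{Result: L2boundedness} together with the kernel smoothness of Theorem~\ref{thm:CZ kernel smoothness}. But those results require \(\widehat{\psi},\widehat{\phi}\in W^{2,1}\), respectively \(\widehat{\psi}\in W^{\N+3,1}\cap W^{\N+3,2}\), \(\xi^\alpha\widehat{\phi}\in W^{\N+3,1}\), \(\xi^\alpha\widehat{\phi}\in W^{\alpha+2,2}\) and finiteness of \(\sigma_\alpha,\tau_\alpha\) --- hypotheses strictly stronger than, and not implied by, those of the present statement (\(\widehat{\psi},\widehat{\phi}\in W^{\N+1,2}\) plus decay of the \((\N+1)\)-st derivatives and vanishing moments). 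So as written you prove only a weaker theorem carrying extra smoothness assumptions, not the stated one; and the missing integrability is precisely the substance of the paper's Propositions~\ref{prop:SynthesisVanishingBdd}--\ref{prop:AnalysisVanishingBdd} (note that the strong decay exponent \(-\epsilon-2\N-5/2\) imposed on \(\widehat{\phi}^{(\N+1)}\) exists exactly to make that weighted analysis bound work), so it cannot be dismissed as routine. A secondary weakness is that the step you yourself identify as hardest --- uniform convergence near \(0\) of the differentiated double series, including recovering decay of \(\widehat{\phi}\) and its lower-order derivatives from the hypothesis on \(\widehat{\phi}^{(\N+1)}\) and controlling the \(m\)-sums at coarse scales --- is only sketched, not carried out.
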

	
	\subsection{Auxiliary Results and Proof of Theorem~\ref{thm:CZ kernel smoothness}}
	\label{subsec: Necessities for Thm4}
	
	In this subsection, we present Proposition~\ref{prop:estimation of alpha partial of K_0} and prove Theorem~\ref{thm:CZ kernel smoothness} by using the proposition. In Appendix~\ref{appx : lemmas needed in Sec2}, we place some lemmas used to show Proposition~\ref{prop:estimation of alpha partial of K_0} and Theorem~\ref{thm:CZ kernel smoothness}. 
	
	We define the series \(K_0\) as
	\[K_0(x,y) := \sum\limits_{k\in \mathbb{Z}} \psi(x-k) \overline{\phi(y-k)},\]
	so that the wavelet frame kernel \(K_{\psi,\phi}\) in (\ref{WaveletFrameKernel}) can be written as
	\[K_{\psi,\phi}(x,y) = \sum\limits_{j \in\mathbb{Z}} A^{j} K_0(A^j x, A^j y)\]
	with a dilation factor \(A>1\).
	
	The following proposition gives the estimation for \(\partial^\alpha_y K_0(x,y)\), and it is a generalization of Lemma 4.7 and 4.8 in \cite{[BuiLaug]WFBonLH}, which handles the case \(p=1\). Our proof of Proposition \ref{prop:estimation of alpha partial of K_0} is similar to the proof of Lemma 4.7 and 4.8 in \cite{[BuiLaug]WFBonLH}, but shows how to handle the case \(0 < p <1\) properly. We use Lemma~\ref{lemma:W^{n,1}} and \ref{lemma:AbsolutelyConvergenceContinuityK0} in Appendix~\ref{appx : lemmas needed in Sec2} to show the proposition.
	
	\begin{proposition}
		Let \(0<p\leq1\) be fixed and let \(\Np = \lfloor 1/p-1 \rfloor\). Assume that \(\psi, \phi \in L^2 \). For any fixed \(0 \leq \alpha \leq \Np +1\), suppose that \(\widehat{\psi},\widehat{\phi},  \cdots, \xi^\alpha\widehat{\phi} \in W^{2,1}\) and \(\widehat{\psi}, \xi^\alpha \widehat{\phi}\in W^{\alpha+2,2}\), and that \(\sigma_\alpha(\psi,\phi)\) and \(\tau_\alpha(\psi,\phi)\) in (\ref{eq:sigma_alpha}) and (\ref{eq:tau_alpha}), respectively, are finite. Then for all \(x,y\in\mathbb{R}\) such that \(x\neq y\), we have
		\[\lvert \partial^\alpha_y K_0(x,y)\rvert \leq \min \left\{ \sigma_\alpha(\psi,\phi), \frac{\tau_\alpha(\psi,\phi)}{\lvert x-y \rvert ^{\alpha+2}} \right\}.\]
		\label{prop:estimation of alpha partial of K_0}
	\end{proposition}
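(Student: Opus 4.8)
The plan is to reduce this pointwise estimate to a Poisson summation identity on the Fourier side and then read off the two bounds separately: the \(\sigma_\alpha\) bound by a crude triangle-inequality estimate, and the \(\tau_\alpha\) bound by integrating by parts \(\alpha+2\) times to manufacture the decay in \(|x-y|\). First I would differentiate the defining series for \(K_0\) term by term in \(y\). Using the absolute and uniform convergence supplied by Lemma~\ref{lemma:AbsolutelyConvergenceContinuityK0} (together with the hypotheses \(\xi^\beta\widehat{\phi}\in W^{2,1}\) for \(0\le\beta\le\alpha\), which force \(\phi^{(\beta)}\) to decay fast enough for the \(k\)-sums to converge), one obtains
\[\partial_y^\alpha K_0(x,y)=\sum_{k\in\mathbb{Z}}\psi(x-k)\,\overline{\phi^{(\alpha)}(y-k)}.\]
Viewing the summand as \(\atom(k)\) for the function \(\atom(t):=\psi(x-t)\overline{\phi^{(\alpha)}(y-t)}\) and applying Poisson summation, I would write \(\partial_y^\alpha K_0(x,y)=\sum_{l\in\mathbb{Z}}\widehat{\atom}(l)\). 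A short Fourier computation, using \(\widehat{\phi^{(\alpha)}}(\zeta)=(2\pi i\zeta)^\alpha\widehat{\phi}(\zeta)\) and resolving the resulting delta in the time variable, yields the explicit modulated integral
\[\widehat{\atom}(l)=e^{-2\pi i l y}\int_{\mathbb{R}}\widehat{\psi}(\xi)\,\bigl(-2\pi i(\xi+l)\bigr)^\alpha\,\overline{\widehat{\phi}(\xi+l)}\,e^{2\pi i\xi(x-y)}\,d\xi.\]
Abbreviating \(G_l(\xi):=(-2\pi i)^\alpha\,\widehat{\psi}(\xi)\,(\xi+l)^\alpha\,\overline{\widehat{\phi}(\xi+l)}\), this reads \(\widehat{\atom}(l)=e^{-2\pi i l y}\int G_l(\xi)e^{2\pi i\xi(x-y)}\,d\xi\), which is the key representation for both estimates.

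For the first bound I would simply estimate \(|\widehat{\atom}(l)|\le(2\pi)^\alpha\int|\xi+l|^\alpha|\widehat{\psi}(\xi)|\,|\widehat{\phi}(\xi+l)|\,d\xi\); after the substitution \(\eta=\xi+l\), summing over \(l\) and reindexing \(l\mapsto-l\), this collapses exactly to \(\sum_l|\widehat{\atom}(l)|\le\sigma_\alpha(\psi,\phi)\), hence \(|\partial_y^\alpha K_0(x,y)|\le\sigma_\alpha(\psi,\phi)\). For the second bound I would integrate by parts \(\alpha+2\) times in \(\xi\), each step transferring a derivative from \(e^{2\pi i\xi(x-y)}\) onto \(G_l\) at the cost of a factor \(\bigl(2\pi i(x-y)\bigr)^{-1}\), to obtain
\[\int G_l(\xi)e^{2\pi i\xi(x-y)}\,d\xi=\frac{(-1)^{\alpha+2}}{\bigl(2\pi i(x-y)\bigr)^{\alpha+2}}\int G_l^{(\alpha+2)}(\xi)e^{2\pi i\xi(x-y)}\,d\xi.\]
Taking absolute values and using \(\|G_l^{(\alpha+2)}\|_{L^1}=(2\pi)^\alpha\bigl\|(\xi^\alpha\overline{\widehat{\phi}}\,\widehat{\psi}(\cdot-l))^{(\alpha+2)}\bigr\|_{L^1}\) (again via \(\eta=\xi+l\)), then summing over \(l\) and reindexing, the factors \((2\pi)^\alpha\) and \((2\pi)^{\alpha+2}\) combine into the prefactor \(1/4\pi^2\), delivering \(|\partial_y^\alpha K_0(x,y)|\le\tau_\alpha(\psi,\phi)/|x-y|^{\alpha+2}\). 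Combining the two estimates gives the claimed minimum.

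The main obstacle is not the formal computation but the justification of every interchange, and above all that the \(\alpha+2\) integrations by parts leave no boundary contributions. The boundary terms vanish provided \(G_l^{(j)}\to0\) at \(\pm\infty\) for \(0\le j\le\alpha+1\), which follows once \(G_l^{(j)}\in W^{1,1}\); securing \(G_l\in W^{\alpha+2,1}\) in turn requires expanding \(G_l^{(\alpha+2)}\) by the Leibniz rule and verifying that every term lies in \(L^1\). Here the \(L^2\)-Sobolev hypotheses \(\widehat{\psi},\xi^\alpha\widehat{\phi}\in W^{\alpha+2,2}\) do the decisive work: each Leibniz summand \(\widehat{\psi}^{(j)}\cdot(\xi^\alpha\overline{\widehat{\phi}})^{(\alpha+2-j)}(\cdot+l)\) is a product of two \(L^2\) functions, hence lies in \(L^1\) by Cauchy--Schwarz, the product bookkeeping being exactly the content of Lemma~\ref{lemma:W^{n,1}}. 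The \(W^{2,1}\) hypotheses, by contrast, are what license the termwise differentiation of \(K_0\) and the pointwise validity of Poisson summation, while the assumed finiteness of \(\sigma_\alpha(\psi,\phi)\) and \(\tau_\alpha(\psi,\phi)\) guarantees the \(l\)-summability needed to push both estimates through the Poisson sum.
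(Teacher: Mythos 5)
Your proposal is correct and follows essentially the same route as the paper: the paper's proof is precisely the Poisson-summation argument you describe, carried out by periodizing \(F(t):=\partial_y^\alpha K_0(x+t,y+t)\), showing \(\sum_{l}\lvert c_F(l)\rvert\le\sigma_\alpha(\psi,\phi)<\infty\), and using continuity of \(F\) to conclude \(\partial_y^\alpha K_0(x,y)=\sum_{l}c_F(l)\), after which the \(\sigma_\alpha\) bound is the triangle inequality and the \(\tau_\alpha\) bound comes from \((\alpha+2)\)-fold integration by parts licensed by \(\widehat{\psi},\xi^\alpha\widehat{\phi}\in W^{\alpha+2,2}\). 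The only cosmetic difference is that the paper's Fourier coefficient carries the shift on \(\widehat{\psi}\) rather than on \(\widehat{\phi}\), which is equivalent to your form after the substitution \(\eta=\xi+l\) and the reindexing \(l\mapsto-l\) that you already perform, so your estimates reproduce \(\sigma_\alpha\) and \(\tau_\alpha\) exactly as theirs do.
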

	
	\begin{proof}
		Let us fix \(0 < p \leq 1\) and \(0\leq \alpha \leq \Np+1\). From the assumptions \(\widehat{\psi},\widehat{\phi},\cdots, \xi^\alpha\widehat{\phi} \in W^{2,1}\), we obtain the continuity of \(\psi, \phi, \cdots, \phi^{(\alpha)}\) and decay conditions \(\lvert \psi(x) \rvert, \lvert \phi(x) \rvert, \cdots, \lvert \phi^{(\alpha)}(x) \rvert \lesssim 1/(1+\lvert x \rvert^2)\) by Lemma~\ref{lemma:W^{n,1}}. Hence, by Lemma~\ref{lemma:AbsolutelyConvergenceContinuityK0}, we can write \(\partial_y^\alpha K_0(x,y)\) as a converging series. Then, we define a periodic function \(F(t)\) with period \(1\) as
		\begin{equation}
			F(t) := \partial^\alpha_y K_0(x+t,y+t) = \sum\limits_{k\in \mathbb{Z}} \psi(x+t-k) \,\overline{\phi^{(\alpha)}(y+t-k)}.
			\label{B-PeriodicF(t)}
		\end{equation}
		Since \(\psi,\phi,\cdots, \phi^{(\alpha)}\) are continuous, \(F\) is continuous, by Lemma \ref{lemma:AbsolutelyConvergenceContinuityK0} again.  
		For \(l\in \mathbb{Z}\), the \(l\)-th Fourier coefficient of \(F\) is given as 
		\begin{equation*}
			c_F(l) = \displaystyle\int_0^1 \sum\limits_{k\in \mathbb{Z}} \psi(x+t-k) \, \overline{\phi^{(\alpha)}(y+t-k)} e^{- 2\pi i t l} dt .
		\end{equation*}
		Since \(\phi^{(\alpha)}\) is bounded and \(\lvert \psi(x) \rvert \lesssim 1/(1+\lvert x \rvert^2)\), the series in the integrand converges uniformly. 
		So, we have 
		\begin{eqnarray*}
			c_F(l) & = & \int_{\mathbb{R}} \psi(x+t) \,\overline{\phi^{(\alpha)}(y+t)} e^{-2\pi i t l} dt\\ 
			& = & (-2\pi i)^{\alpha} e^{2\pi i x l} \displaystyle\int e^{2\pi i \xi (x-y)} \xi^\alpha \overline{\widehat{\phi}(\xi)} \widehat{\psi}\left(\xi+ l\right)   d\xi. 
		\end{eqnarray*}
		By summing the absolute value of the coefficients \(c_F(l)\) over \(l\in\mathbb{Z}\), we have
		\[
		\sum\limits_{l\in\mathbb{Z}} \lvert c_F(l)  \rvert \leq (2\pi)^\alpha \sum\limits_{l\in\mathbb{Z}} \left\| \xi^\alpha \overline{\widehat{\phi} (\cdot)} \widehat{\psi} \left(\cdot+ l \right) \right\|_{L^1} = \sigma_\alpha(\psi,\phi)<\infty.
		\]
		Also, one can show that \(\lvert F(t) \rvert \lesssim 1/(1+\lvert t \rvert^2)\) holds. Since \(F\) is continuous with \(\sum_{l\in\mathbb{Z}} \lvert c_F(l)\rvert<\infty\), the Fourier series of \(F\) converges pointwise and
		\(F(0) =\sum_{l\in\mathbb{Z}} c_F(l)\). Then, we have \(\partial^\alpha_y K_0(x,y) = \sum_{l\in\mathbb{Z}} c_F(l)\) from (\ref{B-PeriodicF(t)}).
		Thus we have an estimation for the absolute value of  \(\partial_y^\alpha K_0\) as
		\begin{equation*}
			\lvert \partial_y^\alpha K_0(x,y) \rvert \leq \sum\limits_{l\in\mathbb{Z}} \lvert c_F(l) \rvert \leq (2\pi)^{\alpha} \sum\limits_{l\in\mathbb{Z}} \left\| \xi^\alpha \overline{\widehat{\phi}(\cdot)} \widehat{\psi}\left(\cdot + l \right) \right\|_{L^1},
		\end{equation*}
		where the last inequality comes from the above inequality. 
		
		Since \(\xi^\alpha \widehat{\phi} , \widehat{\psi} \in W^{\alpha+2,2}\), we can use the integration by parts \((\alpha+2)\)-times for each \(c_F(l)\) in \(\partial_y^\alpha K_0(x,y) = \sum_{l\in\mathbb{Z}} c_F(l)\). This gives another estimation for the absolute value of \(\partial_y^\alpha K_0\) as
		\begin{equation*}
			\lvert \partial^\alpha_y K_0(x,y)\rvert   \leq   \left( \frac{1}{4\pi^2} \sum\limits_{l\in\mathbb{Z}} \left\| \left( \xi^\alpha \overline{\widehat{\phi}(\cdot)} \widehat{\psi}\left(\cdot+ l \right)   \right)^{(\alpha+2)} \right\|_{L^ 1 }\right) \frac{1}{\lvert x-y \rvert^{\alpha+2}}
		\end{equation*}
		
		Therefore, for each \(0 \leq \alpha \leq \Np+1\), we conclude that
		\[\lvert \partial^\alpha_y K_0(x,y)\rvert \leq \min \left\{ \sigma_\alpha(\psi,\phi), \frac{\tau_\alpha(\psi,\phi)}{\lvert x-y\rvert^{\alpha+2}} \right\} .\]
	\end{proof}
	
	Based on the estimation on \(\partial^\alpha_y K_0(x,y)\) shown in Proposition~\ref{prop:estimation of alpha partial of K_0}, we prove Theorem~\ref{thm:CZ kernel smoothness} by additionally using Lemma~\ref{lemma:W^{n,1}}, \ref{lemma:appendix1} and \ref{lemma:AbsoluteConvergenceEstimationK} in Appendix~\ref{appx : lemmas needed in Sec2}.
	
	\begin{proof}[Proof of Theorem~\ref{thm:CZ kernel smoothness}]
		We first show that, for each fixed \(0\leq \alpha \leq \N +1\), if \(\widehat{\psi}, \widehat{\phi},  \cdots, \xi^\alpha\widehat{\phi} \in W^{\alpha+2,1}\) and \(\widehat{\psi}, \xi^\alpha \widehat{\phi}\in W^{\alpha+2,2}\), then for all \(x \neq y\), 
			\[\lvert \partial^\alpha_y K_{\psi,\phi}(x,y) \rvert \leq \frac{\CZ_{\alpha}(\psi,\phi)}{\lvert x-y \rvert^{\alpha+1} }.\]
			To see this, we first invoke Lemma \ref{lemma:W^{n,1}} and obtain the decay conditions  \(\lvert \psi(x) \rvert, \lvert \phi(x) \rvert, \cdots, \lvert \phi^{(\alpha)}(x)\rvert\) \( \lesssim 1/(1+\lvert x\rvert^{\alpha+2})\) and the continuity of \(\psi, \phi, \cdots, \phi^{(\alpha)}\) from the assumptions \(\widehat{\psi}, \widehat{\phi}, \cdots, \xi^\alpha \widehat{\phi}\in W^{\alpha+2,1}\). Then, by  Lemma~\ref{lemma:AbsoluteConvergenceEstimationK}, \(\partial_y^\alpha K_{\psi,\phi}(x,y)\) exists and can be written as \(\partial^\alpha_y K_{\psi,\phi}(x,y)  =  \sum_{j\in\mathbb{Z}} A^{j(\alpha+1)} \partial^\alpha_y K_0(A^j x, A^j y)\). Next, we use Proposition \ref{prop:estimation of alpha partial of K_0} to get \(\lvert \partial^\alpha_y K_0(x,y)\rvert \leq \min \left\{ \sigma_\alpha(\psi,\phi), \tau_\alpha(\psi,\phi)/{\lvert x-y \rvert^{\alpha+2}} \right\}\) from the assumptions \(\widehat{\psi}, \xi^\alpha \widehat{\phi} \in W^{\alpha+2,2}\). By taking \(\sigma = \sigma_\alpha(\psi,\phi), \,\tau = \tau_\alpha(\psi,\phi)\) and \(l=\alpha\) in Lemma~\ref{lemma:appendix1}, we obtain the above bound for \(\lvert \partial_y^\alpha K_{\psi,\phi} (x,y) \rvert\).
			
			Now, since the given regularity assumptions of Theorem~\ref{thm:CZ kernel smoothness} imply the above regularity assumptions for every \(0 \leq \alpha \leq \N+1\), the desired bound of Theorem~\ref{thm:CZ kernel smoothness} is obtained by taking the maximum value of \(\CZ_\alpha(\psi,\phi)\) over \(0 \leq \alpha \leq \N+1\), and this completes the proof.
	\end{proof}
	
	\subsection{Auxiliary Results and Proof of Theorem~\ref{thm:CZ kernel vanishing}}
	\label{subsec: Necessities for Thm5}
	
	In this subsection, we present Proposition~\ref{prop:SynthesisVanishingBdd} and \ref{prop:AnalysisVanishingBdd}, and prove Theorem~\ref{thm:CZ kernel vanishing} using them. A more technical lemma for showing Proposition~\ref{prop:AnalysisVanishingBdd} is in Appendix~\ref{appx : lemmas needed in Sec2}.
	
	We start by defining some spaces and operators and by stating propositions. Let \(0<p \leq1\) and \(\Np=\left\lfloor 1/p -1 \right\rfloor\) as before. Let us define the space \(K^{\Np+1,2}\), which can be thought of as the Fourier transform of the Sobolev space \(W^{\Np+1,2}\), as follows:
	\begin{equation*}
		K^{\Np+1,2} := \left\{f\in L^2: \left\| f \right\|_{K^{\Np+1,2}}:=\int (1+x^{2\Np+2}) \lvert f(x) \rvert ^2 dx< \infty\right\}.
	\end{equation*}
	We further define the space \(K^{\Np+1,2}_*\) with the vanishing moment condition
	\begin{equation*}
		K^{\Np+1,2}_* := \left\{f\in K^{\Np+1,2}: \int f(x) x^\alpha dx = 0, \,\forall \, 0 \leq \alpha \leq \Np\right\},
	\end{equation*}
	and the corresponding sequence space \(l^{\Np+1,2}\) as the set of all sequences \(c=\{c_{jk}\}\) such that \[\left\| c \right\|_{l^{\Np+1,2}}^2 := \sum\limits_{j,k\in\mathbb{Z}} \left( 1+ A^{-2j(\Np+1)} \left(1+k^{2(\Np+1)}\right)\right) c_{jk}^2 <\infty\]
	for \(A>1\). 
	These spaces are the generalization of the corresponding spaces studied in \cite{[BuiLaug]WLPMHC} for the case when \(\Np=0\).
	
	We now recall the wavelet analysis and synthesis operators for \(L^2\) and \(l^2:=l^2(\mathbb{Z}\times \mathbb{Z})\).
	
	\begin{definition}
		Let \(\psi,\phi\in L^2\) satisfy the decay condition (\ref{eq:BLdecay}).
		\begin{enumerate}[(a)]
			
			\item The \textit{(wavelet) analysis operator} \(t := t_\phi:L^2 \to l^2\) (associated with an analyzer \(\phi\)) is a bounded linear map defined by
			\[f \mapsto t(f) = \{\left<f,\phi_{jk}\right>\}_{j,k\in\mathbb{Z}}.\]
			\item The \textit{(wavelet) synthesis operator} \(s := s_\psi:l^2 \to L^2\) (associated with a synthesizer \(\psi\)) is a bounded linear map defined by
			\[c=\{c_{jk}\} \mapsto s(c)=\sum\limits_{j,k\in\mathbb{Z}} c_{jk} \psi_{jk}.\]
		\end{enumerate}
		\label{def:synthesis,analysis operator}		
	\end{definition}

	The boundedness of these operators is proved in \cite{[BuiLaug]FSF,[BuiLaug]WLPMHC}, used in our proof of Proposition~\ref{prop:SynthesisVanishingBdd} and \ref{prop:AnalysisVanishingBdd} below, and stated separately in the following lemmas for a clearer presentation.
	
	\begin{lemma}[\cite{[BuiLaug]FSF,[BuiLaug]WLPMHC}]
		Assume that \(\Phi \in L^2 \) satisfies the decay condition (\ref{eq:BLdecay}). Let \(\phi(x) = \widecheck{\Phi} (-x)\). Then the analysis operator \(t: L^2 \to l^2\) is bounded and linear.
		\label{lemma: analyzer L2 bddness}
	\end{lemma}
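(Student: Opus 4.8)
The plan is to verify that $W_A(\phi)$ is a Bessel sequence, since linearity of $t$ is immediate from the linearity of the inner product; the entire content is the bound $\sum_{j,k\in\mathbb Z}|\langle f,\phi_{jk}\rangle|^2\le B\,\|f\|_{L^2}^2$ with $B<\infty$. First I would record that the hypothesis $\phi(x)=\widecheck{\Phi}(-x)$ gives $\widehat{\phi}(\xi)=\Phi(-\xi)$, so $\widehat{\phi}$ inherits the decay estimate \eqref{eq:BLdecay} from $\Phi$ (the bound depends only on $|\xi|$ and is therefore insensitive to the reflection). Thus it suffices to prove the Bessel bound assuming that $\widehat{\phi}$ itself satisfies \eqref{eq:BLdecay}.

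The computation I would carry out is the classical frequency-side unfolding. Using Plancherel together with $\widehat{\phi_{jk}}(\xi)=A^{-j/2}e^{-2\pi ikA^{-j}\xi}\widehat{\phi}(A^{-j}\xi)$, for each fixed $j$ the numbers $\{\langle f,\phi_{jk}\rangle\}_k$ are, up to the factor $A^{j/2}$, the Fourier coefficients of the $1$-periodization of $u\mapsto \widehat{f}(A^ju)\overline{\widehat{\phi}(u)}$. Parseval on the circle then gives $\sum_k|\langle f,\phi_{jk}\rangle|^2 = A^j\int_0^1\bigl|\sum_{m}\widehat{f}(A^j(u+m))\overline{\widehat{\phi}(u+m)}\bigr|^2\,du$. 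Expanding the square, writing $m'=m+q$, unfolding the periodization back to $\mathbb{R}$, and substituting $\xi=A^ju$ yields the identity
\[\sum_{j,k\in\mathbb Z}|\langle f,\phi_{jk}\rangle|^2 = \sum_{q\in\mathbb Z}\sum_{j\in\mathbb Z}\int_{\mathbb R}\widehat{f}(\xi)\,\overline{\widehat{f}(\xi+A^jq)}\,\overline{\widehat{\phi}(A^{-j}\xi)}\,\widehat{\phi}(A^{-j}\xi+q)\,d\xi.\]

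For the diagonal term $q=0$ the integrand is $|\widehat{f}(\xi)|^2\sum_j|\widehat{\phi}(A^{-j}\xi)|^2$, and I would bound $\Xi(\xi):=\sum_j|\widehat{\phi}(A^{-j}\xi)|^2$ uniformly: $\Xi$ is invariant under $\xi\mapsto A\xi$, hence determined by its values on the compact set $[1,A]$ bounded away from the origin, where \eqref{eq:BLdecay} forces geometric decay of the tails in $j$ and thus a finite, $\xi$-independent bound. For the off-diagonal terms $q\neq0$ I would apply the Cauchy--Schwarz inequality twice---once in $\xi$ and once in $j$, after the change of variables $\xi\mapsto\xi-A^jq$ that symmetrizes the two $\widehat{\phi}$ factors---to bound the $q$-block by $\|f\|_{L^2}^2\,[\,\|\beta_q\|_\infty\|\beta_{-q}\|_\infty\,]^{1/2}$, where $\beta_q(\xi):=\sum_j|\widehat{\phi}(A^{-j}\xi)|\,|\widehat{\phi}(A^{-j}\xi+q)|$. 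This is the standard almost-orthogonality (Schur-type) estimate for wavelet systems, and it in fact absorbs the diagonal case as well (taking $q=0$).

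The hard part will be the summability in $q$, i.e.\ showing $\sum_{q\in\mathbb Z}[\,\|\beta_q\|_\infty\|\beta_{-q}\|_\infty\,]^{1/2}<\infty$; everything else is bookkeeping. Here I would use \eqref{eq:BLdecay} quantitatively: splitting the sum defining $\beta_q(\xi)$ according to whether $|A^{-j}\xi|\le1$ or $\ge1$, and observing that for large $|q|$ at least one of the arguments $A^{-j}\xi$, $A^{-j}\xi+q$ must be large, one obtains a uniform bound of the form $\beta_q(\xi)\lesssim (1+|q|)^{-1-2\epsilon}$. Since $\epsilon>0$, the series $\sum_q(1+|q|)^{-1-2\epsilon}$ converges, producing a finite Bessel constant $B$ and completing the proof. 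I would emphasize that this is exactly where the exponent $-\epsilon-\tfrac12$ in the high-frequency part of \eqref{eq:BLdecay} enters: the extra $\epsilon$ is precisely what upgrades borderline summability into genuine convergence.
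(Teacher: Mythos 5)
Your setup is correct and standard: the reduction to a Bessel bound, the observation that $\widehat{\phi}(\xi)=\Phi(-\xi)$ inherits \eqref{eq:BLdecay}, the periodization/Parseval identity, the uniform bound on the diagonal term, and the double Cauchy--Schwarz reduction to $\sum_{q}\bigl[\|\beta_q\|_\infty\|\beta_{-q}\|_\infty\bigr]^{1/2}$ are all fine. The gap is in the step you yourself flagged as the hard part: the claimed estimate $\beta_q(\xi)\lesssim(1+|q|)^{-1-2\epsilon}$ is false under \eqref{eq:BLdecay}. Your heuristic ``for large $|q|$ at least one of the arguments $A^{-j}\xi$, $A^{-j}\xi+q$ must be large'' makes only \emph{one} of the two factors small; at the transitional scale $j$ where $|A^{-j}\xi|\approx 1$ the factor $|\widehat{\phi}(A^{-j}\xi)|$ is merely $O(1)$ while its companion is $\approx|q|^{-\epsilon-1/2}$, so the sharp uniform bound is only $\beta_q(\xi)\lesssim(1+|q|)^{-\epsilon-1/2}$ (take $\widehat{\phi}(\xi)=\min(|\xi|^{\epsilon},|\xi|^{-\epsilon-1/2})$ and $\xi=1$ to see this is attained). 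Consequently $\sum_{q}\bigl[\|\beta_q\|_\infty\|\beta_{-q}\|_\infty\bigr]^{1/2}\lesssim\sum_q (1+|q|)^{-\epsilon-1/2}$ diverges whenever $\epsilon\le 1/2$, and your argument proves the lemma only for $\epsilon>1/2$. This is precisely the classical Daubechies almost-orthogonality criterion, whose hypothesis is strictly stronger than \eqref{eq:BLdecay}; the whole point of the Bui--Laugesen condition is that the high-frequency decay $|\xi|^{-1/2-\epsilon}$ is square-summable over integer translates but \emph{not} summable, which is exactly what defeats the $\beta_q$ route.

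The fix (and, in essence, what the cited proof in \cite{[BuiLaug]FSF,[BuiLaug]WLPMHC} does) is to avoid expanding in $q$ altogether and instead apply a weighted Cauchy--Schwarz (Schur-type) estimate directly to the periodized sum: with $w(v):=\max\bigl(|v|^{\epsilon},|v|^{-\epsilon}\bigr)$,
\begin{equation*}
\Bigl|\sum_{m}\widehat{f}(A^j(u+m))\overline{\widehat{\phi}(u+m)}\Bigr|^2
\le \Bigl(\sum_{m}|\widehat{f}(A^j(u+m))|^2\,w(u+m)^{-1}\Bigr)\Bigl(\sum_{m}|\widehat{\phi}(u+m)|^2\,w(u+m)\Bigr).
\end{equation*}
The second factor is uniformly bounded because $|\widehat{\phi}(v)|^2 w(v)\lesssim |v|^{\epsilon}$ near the origin and $\lesssim |v|^{-1-\epsilon}$ at infinity, which \emph{is} summable over integer translates; unfolding the first factor and summing over $j$ produces $\int|\widehat{f}(\xi)|^2\sum_j w(A^{-j}\xi)^{-1}\,d\xi$, and $\sum_j w(A^{-j}\xi)^{-1}=\sum_j\min\bigl((A^{-j}|\xi|)^{\epsilon},(A^{-j}|\xi|)^{-\epsilon}\bigr)\le 2/(1-A^{-\epsilon})$ uniformly in $\xi\neq 0$. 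This gives a finite Bessel constant for every $\epsilon>0$. The structural lesson is that under \eqref{eq:BLdecay} one must arrange for the \emph{square} $|\widehat{\phi}|^2$ to appear in every sum over translates, which the weighted inequality does automatically and the unweighted $\beta_q$ machinery does not.
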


	\begin{lemma}[\cite{[BuiLaug]FSF,[BuiLaug]WLPMHC}]
		Assume that \(\Psi \in L^2\) satisfies the decay condition (\ref{eq:BLdecay}). Let \(\psi(x) = \widecheck{\Psi}(-x)\). Then the synthesis operator \(s:l^2 \to L^2\) is bounded and linear, with unconditional convergence of the series.
		\label{lemma: synthesis L2 bddness}
	\end{lemma}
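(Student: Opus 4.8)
The plan is to obtain the boundedness of \(s=s_\psi\) by duality with the analysis operator, and the unconditional convergence from the Bessel property of \(W_A(\psi)\). First I would record how the hypothesis on \(\Psi\) transfers to \(\psi\): writing \(\widehat{\psi}(\xi)=\int \widecheck{\Psi}(-x)e^{-2\pi i x\xi}\,dx\) and substituting \(u=-x\) gives \(\widecheck{\widecheck{\Psi}}(\xi)=\Psi(-\xi)\), so \(\widehat{\psi}(\xi)=\Psi(-\xi)\). Since the bound in \eqref{eq:BLdecay} depends only on \(\lvert\xi\rvert\), the generator \(\psi\) itself satisfies the decay hypothesis of Lemma~\ref{lemma: analyzer L2 bddness}, now read with analyzer equal to \(\psi\) (i.e.\ taking \(\Phi:=\Psi\), \(\phi:=\psi\)). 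Hence the analysis operator \(t_\psi:L^2\to l^2\), \(f\mapsto\{\langle f,\psi_{jk}\rangle\}\), is bounded; equivalently, \(W_A(\psi)\) is a Bessel sequence with bound \(B:=\lVert t_\psi\rVert^2\).

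Next I would identify \(s_\psi\) as the Hilbert-space adjoint \(t_\psi^*\). For \(c\in l^2\) and \(f\in L^2\),
\[
\langle t_\psi f, c\rangle_{l^2}=\sum_{j,k}\langle f,\psi_{jk}\rangle\,\overline{c_{jk}}=\Big\langle f,\ \sum_{j,k}c_{jk}\psi_{jk}\Big\rangle_{L^2},
\]
so that \(t_\psi^*c=\sum_{j,k}c_{jk}\psi_{jk}=s_\psi(c)\). Linearity is immediate, and boundedness follows at once with \(\lVert s_\psi\rVert=\lVert t_\psi\rVert\).

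For the unconditional convergence I would use only the Bessel bound. For any finite index set \(F\), duality and Cauchy--Schwarz give
\[
\Big\lVert\sum_{(j,k)\in F}c_{jk}\psi_{jk}\Big\rVert_{L^2}=\sup_{\lVert g\rVert_{L^2}=1}\Big\lvert\sum_{(j,k)\in F}c_{jk}\langle\psi_{jk},g\rangle\Big\rvert\le B^{1/2}\Big(\sum_{(j,k)\in F}\lvert c_{jk}\rvert^2\Big)^{1/2},
\]
where the last step uses the Bessel inequality \(\sum_{j,k}\lvert\langle\psi_{jk},g\rangle\rvert^2\le B\lVert g\rVert^2\). Since \(\sum_{j,k}\lvert c_{jk}\rvert^2<\infty\), the net of partial sums over finite subsets is Cauchy, which is exactly unconditional convergence in \(L^2\).

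The only genuinely technical point is the Bessel estimate itself, which I am importing through Lemma~\ref{lemma: analyzer L2 bddness}; were one to prove it from scratch, this would be the main obstacle. The route is to pass to the frequency side, where \(\langle f,\psi_{jk}\rangle\) is (up to the normalizing factor \(A^{-j/2}\)) a Fourier coefficient of the \(A^{j}\)-periodization of \(\widehat{f}(\xi)\,\overline{\widehat{\psi}(A^{-j}\xi)}\). Plancherel in \(k\) turns \(\sum_k\lvert\langle f,\psi_{jk}\rangle\rvert^2\) into an integral, and summing over \(j\) splits into the diagonal term \(\sum_j\lvert\widehat{\psi}(A^{-j}\xi)\rvert^2\) and off-diagonal terms weighted by \(\sum_{m\neq0}\lvert\widehat{\psi}(A^{-j}\xi)\rvert\,\lvert\widehat{\psi}(A^{-j}\xi+m)\rvert\). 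The decay \eqref{eq:BLdecay} makes the diagonal sum a convergent two-sided geometric-type series in \(j\); controlling the off-diagonal sum uniformly in \(\xi\) is the delicate part, and this is precisely where \(\epsilon>0\) is needed, so that the \(\lvert\xi\rvert^{-\epsilon-1/2}\) tail is summable against the shift parameter \(m\).
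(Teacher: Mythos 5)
You should note first that the paper contains no proof of Lemma~\ref{lemma: synthesis L2 bddness}: it is imported from \cite{[BuiLaug]FSF,[BuiLaug]WLPMHC}, and the text preceding it states explicitly that the boundedness of the analysis and synthesis operators ``is proved in'' those references and is merely ``stated separately in the following lemmas for a clearer presentation.'' Your argument is therefore being compared against a citation rather than a proof, and it is correct as a derivation of Lemma~\ref{lemma: synthesis L2 bddness} from Lemma~\ref{lemma: analyzer L2 bddness} (which is itself imported). The three steps are sound: the reflection identity \(\widehat{\psi}(\xi)=\Psi(-\xi)\) transfers the hypothesis on \(\Psi\) to \(\widehat{\psi}\) because the bounds in \eqref{eq:BLdecay} depend only on \(\lvert\xi\rvert\), so Lemma~\ref{lemma: analyzer L2 bddness} applied with \(\Phi:=\Psi\) yields the Bessel bound \(B=\lVert t_\psi\rVert^2\); the adjoint identity \(s_\psi=t_\psi^*\) gives linearity and boundedness with \(\lVert s_\psi\rVert=\lVert t_\psi\rVert\); and the finite-subset estimate, using only Cauchy--Schwarz and the Bessel inequality, makes the net of partial sums Cauchy, which is exactly unconditional convergence in \(L^2\). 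One small ordering blemish: the second equality in your adjoint computation pulls an infinite sum out of the inner product, which presupposes that \(\sum_{j,k}c_{jk}\psi_{jk}\) converges --- a fact you establish only afterwards. Since the finite-subset estimate relies solely on the Bessel bound from your first step, this is repaired by running that estimate first and then using the duality computation to identify the already-existing limit with \(t_\psi^*c\). What your route buys relative to the paper's: it shows the synthesis lemma need not be imported as an independent result, since it follows from the analyzer lemma by pure Hilbert-space duality; and your closing paragraph correctly isolates the only irreducibly technical ingredient --- the frequency-side off-diagonal estimate where \(\epsilon>0\) in \eqref{eq:BLdecay} is needed --- as living inside Lemma~\ref{lemma: analyzer L2 bddness}, which you, like the paper, cite rather than prove.
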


	The following proposition shows the synthesis operator \(s\) restricted to \(l^{\Np+1,2}\) is well-defined. It is proved using Lemma~\ref{lemma: synthesis L2 bddness}.
	
	\begin{proposition}
		Let \(0<p \leq 1\) be fixed and let \(\N:=\Np\). Assume that \(\psi \in L^2\) satisfies the following conditions: 
		\begin{enumerate}[(a)]
			\item \(\widehat{\psi} \in W^{\N+1,2}\), and \( \int \psi(x) x^\beta dx= 0\), \(\forall \, 0 \leq \beta \leq \N\),
			\item \(\displaystyle \lvert\widehat{\psi}^{(\N+1)}(\xi)\rvert \lesssim \begin{cases}\lvert \xi \rvert^\epsilon, & \lvert \xi \rvert \leq 1,
				\lvert \xi \rvert^{-\epsilon-\N-3/2}, & \lvert \xi \rvert \geq 1, \end{cases}\) \quad for some \(\epsilon>0.\)
		\end{enumerate}
		Then the restriction \(s \vert_{l^{\N+1,2}} :l^{\N+1,2} \to K^{\N+1,2}_{*}\)of the synthesis operator is well-defined, bounded and linear, with unconditional convergence of the series.
		\label{prop:SynthesisVanishingBdd}
	\end{proposition}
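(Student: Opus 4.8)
The plan is to exploit the weighted structure of the target space: since \(\|g\|_{K^{\N+1,2}}^2 = \|g\|_{L^2}^2 + \|x^{\N+1} g\|_{L^2}^2\), it suffices to bound \(\|s(c)\|_{L^2}\) and \(\|x^{\N+1} s(c)\|_{L^2}\) separately by a constant times \(\|c\|_{l^{\N+1,2}}\). The first bound will be immediate from Lemma~\ref{lemma: synthesis L2 bddness}, once we know \(\widehat{\psi}\) satisfies the decay condition~(\ref{eq:BLdecay}) (verified below): it gives \(\|s(c)\|_{L^2} \lesssim \|c\|_{l^2} \le \|c\|_{l^{\N+1,2}}\), the weight defining \(l^{\N+1,2}\) being bounded below by \(1\). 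The crux is therefore the weighted term \(x^{\N+1} s(c)\).

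For that term I would introduce the auxiliary functions \(\psi^{[m]}(x) := x^m \psi(x)\) for \(0 \le m \le \N+1\). Writing \(x = (x - A^{-j}k) + A^{-j}k\) and expanding by the binomial theorem yields the pointwise identity
\[
x^{\N+1} \psi_{jk}(x) = \sum_{m=0}^{\N+1} \binom{\N+1}{m} A^{-j(\N+1)} k^{\N+1-m}\, (\psi^{[m]})_{jk}(x),
\]
so that \(x^{\N+1} s(c) = \sum_{m=0}^{\N+1}\binom{\N+1}{m}\, s_{\psi^{[m]}}(c^{(m)})\), where \(c^{(m)}_{jk} := A^{-j(\N+1)} k^{\N+1-m} c_{jk}\) and \(s_{\psi^{[m]}}\) denotes the synthesis operator for \(\psi^{[m]}\). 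Each reweighted sequence lies in \(l^2\) with \(\|c^{(m)}\|_{l^2} \le \|c\|_{l^{\N+1,2}}\), because for \(0 \le m \le \N+1\) one has \(A^{-2j(\N+1)} k^{2(\N+1-m)} \le A^{-2j(\N+1)}(1 + k^{2(\N+1)})\), which is dominated by the weight in \(\|c\|_{l^{\N+1,2}}\). Applying Lemma~\ref{lemma: synthesis L2 bddness} to each of the finitely many \(s_{\psi^{[m]}}\) then gives \(\|x^{\N+1} s(c)\|_{L^2} \lesssim \|c\|_{l^{\N+1,2}}\).

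To invoke Lemma~\ref{lemma: synthesis L2 bddness} for \(s_{\psi^{[m]}}\) I must check that \(\psi^{[m]}\in L^2\) and that \(\widehat{\psi^{[m]}}\), a constant multiple of \(\widehat{\psi}^{(m)}\), satisfies~(\ref{eq:BLdecay}); this is the main technical obstacle. Membership \(\psi^{[m]}\in L^2\) is Plancherel combined with \(\widehat{\psi}\in W^{\N+1,2}\). For the decay I expect a downward induction on \(m\) starting at \(m=\N+1\), where the required estimate is precisely hypothesis~(b). For \(m\le\N\) the vanishing moment condition in~(a) translates into \(\widehat{\psi}^{(m)}(0)=0\); using the absolute continuity furnished by \(\widehat{\psi}\in W^{\N+1,2}\), I would write \(\widehat{\psi}^{(m)}(\xi)=\int_0^\xi \widehat{\psi}^{(m+1)}(t)\,dt\) near the origin and \(\widehat{\psi}^{(m)}(\xi)=-\int_\xi^{\infty}\widehat{\psi}^{(m+1)}(t)\,dt\) for large \(\xi\), and integrate the inductive decay of \(\widehat{\psi}^{(m+1)}\). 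This produces \(|\widehat{\psi}^{(m)}(\xi)|\lesssim|\xi|^{\epsilon+1}\) for \(|\xi|\le1\) and \(|\widehat{\psi}^{(m)}(\xi)|\lesssim|\xi|^{-\epsilon-m-1/2}\) for \(|\xi|\ge1\), both of which imply~(\ref{eq:BLdecay}) (and the exponents stay integrable throughout the induction).

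It remains to identify \(s(c)\) as an element of \(K^{\N+1,2}_*\) and to record unconditional convergence. A change of variables with the binomial theorem shows that each \(\psi_{jk}\) inherits the vanishing moments of \(\psi\), i.e. \(\int \psi_{jk}(x)\,x^\alpha\,dx = 0\) for \(0\le\alpha\le\N\). By Cauchy--Schwarz the moment functionals \(g \mapsto \int g(x)\,x^\alpha\,dx\) are continuous on \(K^{\N+1,2}\) for \(0\le\alpha\le\N\), since \(\int x^{2\alpha}/(1+x^{2\N+2})\,dx<\infty\) exactly when \(\alpha\le\N\). As the partial sums of \(s(c)\) converge in \(K^{\N+1,2}\) and annihilate these moments, so does \(s(c)\), placing it in \(K^{\N+1,2}_*\). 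Finally, unconditional convergence in \(K^{\N+1,2}\) follows from the unconditional \(L^2\)-convergence asserted in Lemma~\ref{lemma: synthesis L2 bddness} for \(s\) itself and for each \(s_{\psi^{[m]}}\), transported through the finite binomial decomposition.
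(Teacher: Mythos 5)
Your proof is correct and takes essentially the same route as the paper's: the same splitting of the \(K^{\N+1,2}\)-norm into \(\left\| s(c)\right\|_{L^2}\) and \(\left\| x^{\N+1}s(c)\right\|_{L^2}\), the same binomial decomposition with the auxiliary functions \(x^m\psi\) (the paper's \(\eta_i\)) and the reweighted sequences \(A^{-j(\N+1)}k^{\N+1-m}c_{jk}\), the same appeal to Lemma~\ref{lemma: synthesis L2 bddness}, and the same Cauchy--Schwarz argument against the weight for the vanishing moments. The only differences are refinements: you spell out the downward induction showing each \(\widehat{\psi}^{(m)}\) satisfies the decay condition (\ref{eq:BLdecay}) (which the paper leaves implicit when it invokes the lemma with \(\Psi^{(i)}\)), and you phrase the moment step via continuity of the moment functionals on \(K^{\N+1,2}\) and convergence of partial sums rather than the paper's terse \(L^1\)-finiteness argument.
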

	
	\begin{proof}
		Let \(c\in l^{\N+1,2}\). In order to show \(s(c) \in K^{\N+1,2}\), it suffices to show that \(\left\| s(c) \right\|_{L^2}\) , \(\left\| x^{\N+1} s(c) \right\|_{L^2} <\infty\). Let \(\Psi(-\xi):=\widehat{\psi}(\xi)\). Then, since \(\Psi \in L^2\) and it has the decay condition (\ref{eq:BLdecay}), we get \(s(c) \in L^2\) by Lemma \ref{lemma: synthesis L2 bddness} immediately. 
		Next, since 
		\(
		x^{\N+1} = A^{-j(\N+1)} \sum_{i=0}^{\N+1} \binom{\N+1}{i} (A^jx-k)^{i} k^{\N+1-i},
		\)
		we have
		\begin{equation*}
			x^{\N+1} s(c) =  \sum\limits_{i=0}^{\N+1} \binom{\N+1}{i} \sum\limits_{j,k\in\mathbb{Z}} \left( A^{-j(\N+1)} k^{\N+1-i} c_{jk} \right) (\eta_i)_{jk}(x),
		\end{equation*}
		where \(\eta_i := x^i\psi\). By taking \(\Psi\) in Lemma~\ref{lemma: synthesis L2 bddness} as \(\Psi^{(i)}\) and noting that \(\Psi^{(i)}=\widehat{\eta_i}\), we have
		\begin{equation*}
			\left\| \sum\limits_{j,k\in\mathbb{Z}} \left(A^{-j(\N+1)} k^{\N+1-i} c_{jk}\right) (\eta_i)_{jk} \right\|_{L^2}  \lesssim \left\| A^{-j(\N+1)} k^{\N+1-i} c_{jk} \right\|_{l^2}. 
		\end{equation*}
		Since the right-hand side of the above inequality is bounded by \(\left\| c \right\|_{l^{\N+1,2}}\), we get
		\(
		\left\| x^{\N+1} s(c) \right\|_{L^2} \lesssim \left\| c \right\|_{l^{\N+1,2}}
		\).
		
		Finally, fix \(0 \leq \alpha \leq \N\). Then, since \(\left\| x^{\alpha} s(c)\right\|_{L^1} \lesssim \left\| s(c) \right\|_{K^{\N+1,2}}<\infty\) 
		from the assumption that \(\int \psi(x) x^\beta dx =0 \) for all \(0 \leq \beta \leq \N\), we see 
		that \(\int s(c)(x) x^\alpha dx = 0\). Therefore, \(s(c) \in K^{\N+1,2}_*\), which completes the proof.
	\end{proof}
	
	The next proposition shows the analysis operator \(t\) restricted to \(K^{\Np+1,2}_{*}\) is well-defined. It is proved by using Lemma~\ref{lemma: analyzer L2 bddness}, and Lemma~\ref{lemma:Generalized Hardy's Inequality} in Appendix~\ref{appx : lemmas needed in Sec2}.

	\begin{proposition}
		Let \(0<p \leq 1\) be fixed and let \(\N:=\Np\).  Assume that \(\phi\in L^2\) satisfies the following conditions: 
		\begin{enumerate}[(a)]
			\item \(\widehat{\phi} \in W^{\N+1,2}\), and \( \int \phi(x) x^\beta dx = 0\), \(\forall \, 0 \leq \beta \leq \N\),
			\item \(\displaystyle \lvert\widehat{\phi}^{(\N+1)}(\xi)\rvert \lesssim \begin{cases}\lvert \xi \rvert^\epsilon, & \lvert\xi\rvert \leq 1,
				\lvert\xi \rvert^{-\epsilon-2\N-5/2}, & \lvert\xi \rvert \geq 1,\end{cases}\) \quad for some \(\epsilon>0\).
		\end{enumerate}
		Then the restriction \(t \vert_{K^{\N+1,2}} :K^{\N+1,2}_* \to l^{\N+1,2}\) of the analysis operator is well-defined, bounded and linear.
		\label{prop:AnalysisVanishingBdd}
	\end{proposition}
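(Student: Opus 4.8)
The plan is to estimate the weighted sequence norm $\|t(f)\|_{l^{\N+1,2}}^2$ directly against $\|f\|_{K^{\N+1,2}}^2$ for $f\in K^{\N+1,2}_*$, writing $c_{jk}:=\langle f,\phi_{jk}\rangle$ and splitting the weight $1+A^{-2j(\N+1)}(1+k^{2(\N+1)})$ into three pieces: the unweighted part $\sum_{j,k}|c_{jk}|^2$, the scale-weighted part $\sum_{j,k}A^{-2j(\N+1)}|c_{jk}|^2$, and the position-weighted part $\sum_{j,k}A^{-2j(\N+1)}k^{2(\N+1)}|c_{jk}|^2$. The unweighted part is immediately $\lesssim\|f\|_{L^2}^2\le\|f\|_{K^{\N+1,2}}^2$ by the $L^2$-boundedness of the analysis operator (Lemma~\ref{lemma: analyzer L2 bddness}), after checking that $\phi$, and each modified analyzer $x^i\phi$ with $0\le i\le\N+1$, satisfies the decay condition \eqref{eq:BLdecay}; this follows by integrating the $(\N+1)$-th--derivative bounds in hypothesis (b) and using the moment condition (a), noting that $\widehat{x^i\phi}=c_i\,\widehat{\phi}^{(i)}$ inherits the required $|\xi|^\epsilon$ decay near the origin (in particular, the hypothesis forces $\widehat{\phi}^{(\N+1)}(\xi)\lesssim|\xi|^\epsilon$ there).

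Next I would treat the position-weighted part by the same ``centering'' device used in the proof of Proposition~\ref{prop:SynthesisVanishingBdd}. Writing $x_{jk}:=A^{-j}k$ for the center of $\phi_{jk}$ and expanding $x^{\N+1}$ about $x_{jk}$, the identity $(x-x_{jk})^i\phi_{jk}=A^{-ji}(\eta_i)_{jk}$ with $\eta_i:=x^i\phi$ gives
\[
x_{jk}^{\N+1}c_{jk}=\langle x^{\N+1}f,\phi_{jk}\rangle-\sum_{i=1}^{\N+1}\binom{\N+1}{i}A^{-ji}x_{jk}^{\N+1-i}\langle f,(\eta_i)_{jk}\rangle .
\]
Since $x^{\N+1}f\in L^2$ (as $f\in K^{\N+1,2}$), the first term contributes $\lesssim\|x^{\N+1}f\|_{L^2}^2$ by Lemma~\ref{lemma: analyzer L2 bddness}; and because $A^{-2ji}x_{jk}^{2(\N+1-i)}=A^{-2j(\N+1)}k^{2(\N+1-i)}$, each remaining term is a sum of the same shape but with a strictly smaller power of $k$. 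Iterating the centering device on these lower-order sums (now expanding $x^{l}f$ in place of $f$, which again lies in $L^2$ for $l\le\N+1$) reduces the entire position- and scale-weighted parts to finitely many \emph{pure scale-weighted} sums of the form $\sum_{j,k}A^{-2jm}\,|\langle x^{l}f,(x^i\phi)_{jk}\rangle|^2$ with $0\le m\le\N+1$.

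The heart of the proof is therefore the estimate $\sum_{j,k}A^{-2jm}|\langle g,\zeta_{jk}\rangle|^2\lesssim\|f\|_{K^{\N+1,2}}^2$, where $g=x^l f$ and $\zeta=x^i\phi$. Here I would split over scales. For fine scales $j\ge0$ the factor $A^{-2jm}\le1$, so the sum is dominated by $\sum_{j,k}|\langle g,\zeta_{jk}\rangle|^2\lesssim\|g\|_{L^2}^2\lesssim\|f\|_{K^{\N+1,2}}^2$, again by Lemma~\ref{lemma: analyzer L2 bddness}. The coarse scales $j<0$, where $A^{-2jm}$ grows without bound, form the main obstacle. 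To control them I would exploit that $g=x^lf$ inherits enough vanishing moments from $f\in K^{\N+1,2}_*$ (orders $0$ through $\N-l$, precisely $m$ of them in the relevant reduction $m=\N+1-l$): Taylor-expanding $\zeta_{jk}$ to order $m-1$ and using the moment cancellation of $g$ produces a gain of order $A^{jm}$ that exactly offsets $A^{-jm}$, leaving a localized integral of $x^m g$ against the remainder. Square-summing these remainders over translations $k$ and coarse scales $j<0$ is precisely a setting for the Generalized Hardy inequality (Lemma~\ref{lemma:Generalized Hardy's Inequality}), which yields a bound by $\|x^{m}g\|_{L^2}^2\lesssim\|f\|_{K^{\N+1,2}}^2$.

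Combining the three parts gives $\|t(f)\|_{l^{\N+1,2}}\lesssim\|f\|_{K^{\N+1,2}}$, proving that $t|_{K^{\N+1,2}_*}$ is well-defined and bounded, with linearity clear. I expect the delicate points to be bookkeeping the vanishing-moment count through the iterated centering, so that $x^lf$ always retains exactly enough moments to match the scale weight $A^{-2jm}$, and arranging the coarse-scale sum so that the Generalized Hardy inequality applies cleanly; the fine-scale and $L^2$ parts are routine consequences of Lemma~\ref{lemma: analyzer L2 bddness}.
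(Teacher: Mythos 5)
Your reduction is the same as the paper's: the three-way split of the weight, the binomial/centering expansion (your identity is exactly the paper's expansion of \(k^{\N+1}=(A^jx-(A^jx-k))^{\N+1}\), just organized so that residual powers of \(k\) survive and force an iteration the paper avoids), and the moment bookkeeping \(m=\N+1-l\) are all correct and match the paper's proof of Proposition~\ref{prop:AnalysisVanishingBdd}. The genuine gap is in the step you yourself call the heart of the proof: the coarse-scale estimate of \(\sum_{j<0,k}A^{-2jm}\lvert\langle g,\zeta_{jk}\rangle\rvert^2\). After Taylor-expanding \(\zeta_{jk}\) to order \(m-1\) and using the \(m\) vanishing moments of \(g\), what remains is
\[
A^{-jm}\langle g,\zeta_{jk}\rangle=\frac{A^{j/2}}{m!}\int x^m g(x)\,\overline{\zeta^{(m)}(\theta_{jk}(x))}\,dx,
\]
where \(\theta_{jk}(x)\) is an intermediate point depending on \(x\), \(j\), and \(k\). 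This is \emph{not} an inner product of a fixed \(L^2\) function against dilates/translates of a fixed analyzer, so Lemma~\ref{lemma: analyzer L2 bddness} cannot be invoked to square-sum it over \(k\); and a \(k\)-uniform bound on each remainder diverges when summed over \(k\in\mathbb{Z}\), so some almost-orthogonality in \(k\) must be produced and you never say how. Moreover, Lemma~\ref{lemma:Generalized Hardy's Inequality} is a statement about a single function (\(\lVert f/x^l\rVert_{L^2}\lesssim\lVert f^{(l)}\rVert_{L^2}\) under vanishing conditions at the origin); it is not a device for square-summing Taylor remainders over a lattice of scales and translations, so the assertion that your coarse-scale sum ``is precisely a setting'' for that lemma is unsupported.

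The paper closes exactly this gap with an exact identity in place of a Taylor remainder, and this is the one idea your sketch is missing. Since \(g=x^l f\) has \(m\) vanishing moments, \(\widecheck{g}\) vanishes to order \(m\) at the origin, so the generalized Hardy inequality applies \emph{on the Fourier side}: \(\lVert\widecheck{g}/\xi^m\rVert_{L^2}\lesssim\lVert(\widecheck{g})^{(m)}\rVert_{L^2}\lesssim\lVert x^m g\rVert_{L^2}\). Hence \(G:=\widehat{\left(\widecheck{g}/\xi^m\right)}\in L^2\) with \(G^{(m)}=g\) weakly, and integration by parts gives, for \emph{all} \(j,k\) simultaneously,
\[
A^{-jm}\langle g,\zeta_{jk}\rangle=(-1)^m\bigl\langle G,(\zeta^{(m)})_{jk}\bigr\rangle .
\]
The weighted sum thereby becomes an unweighted analysis sum for the fixed analyzer \(\zeta^{(m)}\) applied to the fixed function \(G\), and Lemma~\ref{lemma: analyzer L2 bddness} (with \(\Phi\) there taken as \(\xi^m\) times the corresponding derivative of \(\widehat{\phi}\), whose decay condition (\ref{eq:BLdecay}) is exactly what your hypothesis (b) is calibrated to guarantee) gives the bound \(\lesssim\lVert G\rVert_{L^2}^2\lesssim\lVert x^{\N+1}f\rVert_{L^2}^2\) in one stroke, with no fine/coarse splitting and no iteration. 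If you replace your coarse-scale argument with this antiderivative identity, your outline turns into the paper's proof.
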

	
	\begin{proof}
		Let \(f\in K^{\N+1,2}_*\). To show \(t(f) = \{\left<f, \phi_{jk} \right>\} \in l^{\N+1,2}\), it suffices to show
		\begin{equation}
			\sum\limits_{j,k\in\mathbb{Z}} \lvert \left< f, \phi_{jk} \right> \rvert ^2 \lesssim \left\| f \right\|_{L^2}^2,
			\label{eq:AnalysisVanishingCond1}
		\end{equation}
		\begin{equation}
			\sum\limits_{j,k\in\mathbb{Z}} A^{-2j(\N+1)} \lvert \left< f, \phi_{jk} \right> \rvert^2 \lesssim \left\| x^{\N+1} f \right\|_{L^2}^2,\quad \text{and}
			\label{eq:AnalysisVanishingCond2}
		\end{equation}
		\begin{equation}
			\sum\limits_{j,k\in\mathbb{Z}} A^{-2j(\N+1)} k^{2(\N+1)} \lvert \left<f,\phi_{jk} \right> \rvert^2 \lesssim \left\| x^{\N+1} f \right\|_{L^2}^2.
			\label{eq:AnalysisVanishingCond3}
		\end{equation}
		Let \( \Phi(-\xi):=\widehat{\phi}(\xi)\). 
		Since \(\Phi \in L^2\) with the decay condition (\ref{eq:BLdecay}), the inequality (\ref{eq:AnalysisVanishingCond1}) is immediate from Lemma \ref{lemma: analyzer L2 bddness}.
		
		Next, let \(F(\xi):=\widecheck{f}(\xi)\). Since \(F\in W^{\N+1,2}\), \(\left\|F/ \xi^{\N+1} \right\|_{L^2} \leq c_1 \left\| F^{(\N+1)} \right\|_{L^2}\) by Lemma~\ref{lemma:Generalized Hardy's Inequality}, and this implies \(F/\xi^{\N+1} \in L^2\). 
		Then, we have \(g:=\widehat{\left( F/\xi^{\N+1} \right)} \in L^2\) and \(g^{(\N+1)} = f\) holds weakly. From 
		\begin{equation*}
			A^{-j(\N+1)} \left<f,\phi_{jk} \right> = (-1)^{\N+1} \left< g, \left(\phi^{(\N+1)}\right)_{jk} \right>
		\end{equation*}
		and by Lemma~\ref{lemma: analyzer L2 bddness} with \(\Phi\) there as \(\xi^{\N+1} \Phi\), we get the inequality (\ref{eq:AnalysisVanishingCond2}) since 
		\begin{equation*}
			\sum\limits_{j,k\in\mathbb{Z}} A^{-2j(\N+1)} \lvert \left<f,\phi_{jk} \right> \rvert^2 \leq c_2 \lVert g \rVert_{L^2}^2\le c_1^2 c_2 \lVert x^{\N+1} f \rVert_{L^2}^2.
		\end{equation*}
		
		Finally, from
		\begin{equation*}
			k^{\N+1} = (A^j x -(A^j x - k))^{\N+1} = \sum\limits_{i=0}^{\N+1} \binom{\N+1}{i} (A^jx)^{\N+1-i} (-(A^jx-k))^{i},
		\end{equation*}		
		to get the inequality (\ref{eq:AnalysisVanishingCond3}), it suffices to show that for any fixed \(0 \leq i \leq \N+1\), 
		\begin{eqnarray}
			\sum\limits_{j,k\in\mathbb{Z}} A^{-2j(\N+1)} (A^j x)^{2(\N+1-i)} (A^jx-k)^{2i}\lvert \left<f, \phi_{jk} \right> \rvert^2 \leq \widetilde{c}_i\left\| x^{\N+1} f \right\|_{L^2}^2
			\label{eq:AnalysisVanishingCond3_auxiliary}
		\end{eqnarray}
		for some \(\widetilde{c}_i>0\). For \(i=0\), since \(\Phi \in L^2\) and it has the decay condition (\ref{eq:BLdecay}), the above inequality holds due to Lemma~\ref{lemma: analyzer L2 bddness}. So, we may assume that \(0 < i \leq \N+1\) and fix \(i\). Note that
		\begin{equation*}
			A^{-j(\N+1)} (A^jx)^{\N+1-i} (A^jx-k)^i \left<f, \phi_{jk} \right> = A^{-ij} \left< x^{\N+1-i} f, (A^jx-k)^i \phi_{jk} \right>.
		\end{equation*}
		Also, note that \(x^{\N+1-i} f \in K^{i,2}\). Let \(F(\xi):= \widecheck{\left(x^{\N+1-i} f \right)}(\xi)\). Then \(F \in W^{i,2}\) and by Lemma~\ref{lemma:Generalized Hardy's Inequality}, \(\left\| F/\xi^i \right\|_{L^2} \leq c_3 \left\| F^{(i)} \right\|_{L^2}\). Thus, \(F/\xi^i \in L^2\). Then, we have \(g := \widehat{F/\xi^i} \in L^2\) and \(g^{(i)} = x^{\N+1-i} f\) holds weakly. Let \(\eta := x^i \phi\). Then,
		\begin{equation*}
			A^{-ij} \left< x^{\N+1-i} f , (A^jx-k)^i \phi_{jk} \right> = A^{-ij} \left< g^{(i)}, \eta_{jk} \right> = (-1)^i \left<g, (\eta^{(i)})_{jk} \right>.
		\end{equation*}
		Thus, by Lemma~\ref{lemma: analyzer L2 bddness} with \(\Phi\) there as \( \xi^i \Phi^{(i)}\) and by noting that \( \xi^i \Phi^{(i)}=\widehat{\eta^{(i)}}\), we see that the left-hand side of (\ref{eq:AnalysisVanishingCond3_auxiliary}) is bounded by
		\begin{equation*}
			\sum\limits_{j,k\in\mathbb{Z}} \Bigl\lvert \left<g, (\eta^{(i)})_{jk} \right>\Bigr\rvert ^2 \leq c_4\left\| g \right\|_{L^2}^2\le c_3^2 c_4 \left\| x^{\N+1} f \right\|_{L^2}^2. 
		\end{equation*}
		Therefore, we have the inequality (\ref{eq:AnalysisVanishingCond3_auxiliary}) for fixed \(i\), with \(\widetilde{c}_i=c_3^2 c_4\). This completes the proof.
	\end{proof}

	By using Proposition~\ref{prop:SynthesisVanishingBdd} and \ref{prop:AnalysisVanishingBdd}, let us prove Theorem~\ref{thm:CZ kernel vanishing}. 
	
	\begin{proof}[Proof of Theorem~\ref{thm:CZ kernel vanishing}]
		Let \(\psi\) and \(\phi\) satisfy all the assumptions in Theorem~\ref{thm:CZ kernel vanishing}. Since all assumptions are the union of all the assumptions in Proposition~\ref{prop:SynthesisVanishingBdd} and \ref{prop:AnalysisVanishingBdd}, we see that the composition \(s\circ t \) is bounded and linear on \(K^{\N+1,2}_*\). Since the wavelet frame operator \(U\) satisfies \(Uf = (s \circ t)(f)\) for every \(f\in K^{\N+1,2}_*\), \(U\) is bounded on \(K^{\N+1,2}_*\). 
		
		Let \(f\) be a function in \(L^2\) with compact support, and satisfy \(\int f(x) x^\gamma dx = 0\) for every \(0 \leq \gamma \leq \N\). Then \(f\in K^{\N+1,2}_*\), and thus \(Uf\in K^{\N+1,2}_*\) as well. Let \(0\leq \alpha \leq \N\) be fixed. Since 
		\begin{equation*}
			\lVert x^\alpha Uf \rVert_{L^1} \leq \left(\int \frac{x^{2\alpha}}{1+x^{2\N+2}} dx\right)^{1/2} \left\| \sqrt{1+x^{2\N+2}} Uf \right\|_{L^2} \lesssim \left\| Uf \right\|_{K^{\N+1,2}}<\infty,
		\end{equation*}
		by the assumption that \(\int \psi(x) x^\beta dx =0 \) for all \(0 \leq \beta \leq \N\), we have
		\begin{equation*}
			\int Uf(x) x^\alpha dx = \int f(y) \left( \sum\limits_{j,k\in\mathbb{Z}} A^j \overline{\phi(A^j y -k)} \int \psi(A^jx-k) x^\alpha dx \right) dy =0,
		\end{equation*}
		which completes the proof.
	\end{proof}

	\section{Boundedness of Calder\'{o}n-Zygmund Operator on $H^p(\mathbb{R})$ with an Explicit Bound}
	\label{sec:Bdd of U on Hardy}
	
	\subsection{Main Result}
	\label{subsec:bdd of CZO}
	
	Among the known equivalent definitions of Hardy space \(H^p=H^p(\mathbb{R})\), we adopt the definition of Hardy space with the building block called ``atom'' \cite{[GarciaFrancia]WNI,[GilbertHogan]SmoothMolecularDecomp,[Grafakos]MFA,[Meyer]CZM}. Recall that \(\Np= \left\lfloor 1/p-1 \right\rfloor\). 
	
	\begin{definition}
		For \(0<p\leq 1\), a real-valued function \(\atom \in L^2\) is called a \textit{\((p,2)\)-atom} (or simply, an \textit{atom}) if it satisfies the following conditions:
		\begin{enumerate} [(a)]
			\item supp \(\atom \subset I\) for some bounded interval \(I\),
			\item \(\left\|\atom \right\|_{L^2} \leq \lvert I \rvert^{1/2-1/p}\), and 
			\item \(\displaystyle\int \atom(x) x^\alpha dx = 0\), \(\forall \, 0\leq \alpha \leq \Np \).
		\end{enumerate}
	\label{def:atom}
	\end{definition}
	
	\begin{definition}
		For \(0<p \leq1\), the \textit{(atomic) Hardy space} \(H^p\) is defined by
		\[
		\begin{array}{lclclcl}
			H^p &:= & \{ f\in \mathcal{S}' & : & f= \sum_{k=0}^{\infty} \lambda_k \atom_k \text{ (in the sense of tempered distribution)}\text{ for some }\\
			[8pt] 
			& & & & (p,2)\text{-atoms } \atom_k \text{ and }\sum_{k=0}^{\infty} \lvert \lambda_k \rvert ^p <\infty\},
		\end{array}
		\]
		and the quasi-norm \(\left\| f\right\|_{H^p}\) is defined as \( \inf \Biggl\{ \left( \sum_{k=0}^{\infty} \lvert \lambda_k \rvert^p\right)^{1/p} \Biggr\}\), where the infimum is taken over all possible atomic decompositions of \(f\in H^p\).
		\label{def:AtomicHardy}
	\end{definition}
	
	The Hardy space \(H^p\) is a complete space with \(\left\| \cdot \right\|_{H^p}\) for \(0<p \leq 1\) \cite{[Bownik]AniHardy,[Grafakos]MFA}. We will show that the Calder\'{o}n-Zygmund operator \(Z\) (see Definition~\ref{def: Calderon-Zygmund}) has a bounded extension to \(H^p\). This is an extension of the result in \(H^1\) space studied in \cite{[BuiLaug]WFBonLH}. 
	
	Before stating our \(H^p\) extension result, we introduce some numbers used in its statement and throughout the paper. Let \(0<p \leq 1\) be fixed and let \(\N:=\N_p\). Let  \(b\) denote any number larger than \(2/p\) and let \(\delta\) be defined as
	\begin{equation}
		\delta:= \delta(b): =  0.5\cdot \left((2 \N+3)/(2b) + \sqrt{4+((2\N+3)/(2b))^2}\right).
		\label{eq:b and delta}
	\end{equation}
		We also reserve the letter \(\Calphamax\) to denote 
				\begin{equation}
			\Calphamax:= \Calphamax(\N):= \max_{0 \leq \alpha \leq \N} \Calpha
			\label{eqn : g}
		\end{equation}
		where \(\Calpha\) are the values described in the following lemma from \cite{[GarciaFrancia]WNI}.
		
		\begin{lemma}[\cite{[GarciaFrancia]WNI}]
			Let \(N\in \mathbb{N}\) and \(R>0\) be fixed. Let \(0 \le \alpha \le N\) be a fixed integer. For each \(k\in \mathbb{N}_0\), there exists a unique polynomial $g_\alpha^k(x)$ of degree at most \(N\) such that
			\begin{align}
			\frac{1}{\lvert E_k\rvert}\int_{E_k} g_\alpha^k(x) x^\beta dx=
			\begin{cases}
			1, \quad &\text{if }\beta=\alpha, \\
			0, \quad &\text{if } 0 \le \beta \le N, \; \beta \neq \alpha,
			\end{cases}
			\label{phiConditionEk}
			\end{align}
			where \(E_0:=\{x\in \mathbb{R} : \lvert x \rvert \le R\}\) and 
			\begin{equation}
			E_k:=\{x\in \mathbb{R} : 2^{k-1}R < \lvert x \rvert \le 2^k R\}, \quad k=1,2,\cdots.
			\label{eq:E_k}
			\end{equation}
			Furthermore, for $G_\alpha^k(x):=g_\alpha^k(x)\chi_{E_k}(x)$, there exists a constant \(\Calpha\) independent of \(k\) and \(R\) such that
			\begin{equation}
			\lvert G_\alpha^k(x) \rvert \le \Calpha (2^k R)^{-\alpha}, \quad \text{for every } x \in \mathbb{R}.
			\label{eq:Calpha}
			\end{equation}
			\label{lemma : polynomial G}
		\end{lemma}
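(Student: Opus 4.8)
The plan is to establish existence and uniqueness first by a finite-dimensional duality argument, and then to obtain the uniform bound \eqref{eq:Calpha} by a dilation (scaling) argument that reduces every $E_k$ to one of two fixed reference sets.

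For existence and uniqueness, I would fix $k$ and work on the $(N+1)$-dimensional space $\mathcal{P}_N$ of real polynomials of degree at most $N$. Since $\lvert E_k\rvert>0$, the bilinear form
\[
\langle p,q\rangle_k := \frac{1}{\lvert E_k\rvert}\int_{E_k} p(x)\,q(x)\,dx
\]
is a genuine inner product on $\mathcal{P}_N$: it is symmetric, and $\langle p,p\rangle_k=0$ forces $p\equiv 0$ on a set of positive measure, hence $p\equiv 0$. Because $1,x,\dots,x^N$ is a basis of $\mathcal{P}_N$, the $N+1$ functionals $p\mapsto\langle p,x^\beta\rangle_k$, $0\le\beta\le N$, are linearly independent and form a basis of the dual space; equivalently, the moment (Gram) matrix with entries $\frac{1}{\lvert E_k\rvert}\int_{E_k}x^{m+\beta}\,dx$ is positive definite, hence invertible. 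Condition \eqref{phiConditionEk} is exactly the requirement that $g_\alpha^k$ be the $\alpha$-th element of the basis of $\mathcal{P}_N$ dual to $\{\langle\,\cdot\,,x^\beta\rangle_k\}_\beta$, which therefore exists and is unique.

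For the bound I would exploit scaling. Writing $\rho_k:=2^k R$ for the outer radius of $E_k$ and substituting $x=\rho_k u$, the dilation carries $E_k$ to a set independent of $R$: to $\Omega_0:=\{\lvert u\rvert\le 1\}$ when $k=0$, and to $\Omega_1:=\{\tfrac12<\lvert u\rvert\le 1\}$ for every $k\ge 1$, with $\lvert E_k\rvert=\rho_k\lvert\Omega\rvert$. On each of these two fixed reference sets $\Omega\in\{\Omega_0,\Omega_1\}$ let $\widetilde{g}_\alpha$ be the unique polynomial of degree $\le N$ with $\frac{1}{\lvert\Omega\rvert}\int_\Omega\widetilde{g}_\alpha(u)\,u^\beta\,du=\delta_{\alpha\beta}$ for $0\le\beta\le N$, furnished by the previous paragraph; crucially $\widetilde{g}_\alpha$ depends only on $\alpha$, $N$, and the choice of reference set, not on $k$ or $R$. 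A change of variables then shows that $x\mapsto\rho_k^{-\alpha}\,\widetilde{g}_\alpha(x/\rho_k)$ satisfies \eqref{phiConditionEk}, since $\frac{1}{\lvert E_k\rvert}\int_{E_k}\rho_k^{-\alpha}\widetilde{g}_\alpha(x/\rho_k)\,x^\beta\,dx=\rho_k^{\beta-\alpha}\delta_{\alpha\beta}$, which equals $1$ for $\beta=\alpha$ and $0$ otherwise. By the uniqueness already established, $g_\alpha^k(x)=\rho_k^{-\alpha}\,\widetilde{g}_\alpha(x/\rho_k)$.

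The estimate then follows immediately: setting $\Calpha:=\max\{\sup_{\overline{\Omega_0}}\lvert\widetilde{g}_\alpha\rvert,\ \sup_{\overline{\Omega_1}}\lvert\widetilde{g}_\alpha\rvert\}$, which is finite (a continuous function on a compact set) and independent of $k$ and $R$, one gets, for $x\in E_k$,
\[
\lvert G_\alpha^k(x)\rvert=\lvert g_\alpha^k(x)\rvert=\rho_k^{-\alpha}\,\lvert\widetilde{g}_\alpha(x/\rho_k)\rvert\le \Calpha\,(2^k R)^{-\alpha},
\]
and the bound holds trivially for $x\notin E_k$ since $G_\alpha^k$ vanishes there. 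The only real subtlety — and the step I would treat most carefully — is the reduction to a reference configuration: the constant must not depend on $R$ or $k$, and this is exactly what the dilation $x=\rho_k u$ guarantees, together with the observation that there are only the two limiting shapes $\Omega_0$ (for $k=0$) and $\Omega_1$ (for all $k\ge 1$), so that taking a maximum over this finite family yields a legitimate constant $\Calpha$.
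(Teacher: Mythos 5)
Your proposal is correct, and it actually supplies a complete argument for a statement the paper never proves: the authors quote Lemma~\ref{lemma : polynomial G} directly from \cite{[GarciaFrancia]WNI} and only illustrate it for \(N=1\) in Appendix~\ref{appx : example of Lemma9}, where \(g_0^k\equiv 1\) and \(g_1^k\) is computed explicitly. Your two steps are sound: the Gram matrix of \(1,x,\dots,x^N\) with respect to \(\langle p,q\rangle_k=\frac{1}{\lvert E_k\rvert}\int_{E_k}pq\,dx\) is positive definite because a nonzero polynomial cannot vanish on a set of positive measure, which gives existence and uniqueness of the dual basis element \(g_\alpha^k\); and the dilation \(x=\rho_k u\), \(\rho_k=2^kR\), sends \(E_k\) onto one of exactly two reference sets (\(\Omega_0\) for \(k=0\), \(\Omega_1\) for \(k\ge 1\)) with \(\lvert E_k\rvert=\rho_k\lvert\Omega\rvert\), so the identity \(\frac{1}{\lvert E_k\rvert}\int_{E_k}\rho_k^{-\alpha}\widetilde{g}_\alpha(x/\rho_k)x^\beta dx=\rho_k^{\beta-\alpha}\delta_{\alpha\beta}\) combined with uniqueness yields \(g_\alpha^k(x)=\rho_k^{-\alpha}\widetilde{g}_\alpha(x/\rho_k)\) and hence the bound \eqref{eq:Calpha} with \(\Calpha\) depending only on \(\alpha\) and \(N\). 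This scaling structure is exactly what the paper's Appendix~\ref{appx : example of Lemma9} exhibits concretely (e.g.\ \(g_1^k(x)=(12/(7(2^kR)^2))x\) is \(\rho_k^{-1}\widetilde{g}_1(x/\rho_k)\) up to the homogeneity in \(x\)), so your argument is the natural generalization of, and consistent with, what the paper records; it also matches the standard proof in the cited reference. One cosmetic point: since the reference polynomials for \(\Omega_0\) and \(\Omega_1\) differ, it would be cleaner to write \(\widetilde{g}_\alpha^{(0)}\) and \(\widetilde{g}_\alpha^{(1)}\) and take \(\Calpha=\max\bigl\{\sup_{\overline{\Omega_0}}\lvert\widetilde{g}_\alpha^{(0)}\rvert,\ \sup_{\overline{\Omega_1}}\lvert\widetilde{g}_\alpha^{(1)}\rvert\bigr\}\), which is what you intend but your notation slightly blurs.
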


		Clearly, \(\Calphamax\) depends on \(\N\), and is independent of other parameters including~\(R\) in Lemma~\ref{lemma : polynomial G} when \(\N\) is fixed. Some more details about values \(\Calpha\) and \(\Calphamax\) are given via an example in Appendix~\ref{appx : example of Lemma9}. 
		

	\begin{theorem}
		Let \(0<p \leq 1\) be fixed and let \(\N:= \Np\). Suppose \(Z\) is a Calder\'{o}n-Zygmund operator with a constant \(\gCZ\). Then for any fixed \(b>2/p\) and \(\zeta \geq \delta\) with \(\delta=\delta(b)\) as in (\ref{eq:b and delta}), \(Z\) has a bounded extension to \(H^p\) satisfying
		\[
		\left\| Z\right\|_{H^p\to H^p}^p  \leq  C_1 \zeta^{p\left(1/p-1/2\right)} \left\| Z \right\|_{L^2\to L^2}^p + C_2 \gCZ^p,
		\]
		where \(C_1\) and  \(C_2:=C_2(b,\zeta)\) are given as follows:
		\[
		C_1 := \left( 1+ \Calphamax \left( \N +1 \right)\right)^p,
		\]
		\begin{eqnarray*}
			C_2(b,\zeta)& := & \left\{\frac{2^{\N+3}}{(\N+1)! \sqrt{2\N+3}}\right\}^p \left\{ \frac{1}{2} \left( \frac{1}{(\zeta + 1)^{2\N+3}} + \frac{1}{(\zeta -1)^{2\N +3}}\right) \zeta^{\frac{2}{p}-1}\right\}^{p/2} \nonumber \\
			&  &  \times \left\{ 2\cdot (2b-1)^{-p/2} C_1 + 3 \Calphamax^p \sum\limits_{0 \leq \alpha \leq \N }  \left(\frac{2 + 2^{-\alpha}}{b-\alpha-1} \right)^p\right\}.
		\end{eqnarray*}
		Here, \(\Calphamax\) is a constant defined as in~(\ref{eqn : g}).
		\label{thm:CZO has Hp bddness}
	\end{theorem}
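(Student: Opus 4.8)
The plan is to reduce everything to a uniform estimate on a single atom and then to analyze $Zh$ annulus by annulus as a ``molecule'' whose atomic decomposition is built from the Garc\'{i}a-Cuerva--Rubio de Francia polynomials of Lemma~\ref{lemma : polynomial G}. First I would record the reduction: since $\|\cdot\|_{H^p}^p$ is $p$-subadditive and the quasi-norm of Definition~\ref{def:AtomicHardy} is an infimum over all atomic decompositions, for $f=\sum_k\lambda_k h_k$ one has $\|Zf\|_{H^p}^p\le\sum_k|\lambda_k|^p\|Zh_k\|_{H^p}^p$; hence it suffices to prove $\|Zh\|_{H^p}^p\le C_1\zeta^{p(1/p-1/2)}\|Z\|_{L^2\to L^2}^p+C_2\gCZ^p$ for every $(p,2)$-atom $h$ of Definition~\ref{def:atom}, the passage from finite atomic sums to all of $H^p$ being justified by the $L^2\to L^2$ boundedness of $Z$ together with a density argument. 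By translation I may center the supporting interval $I$ of $h$ at the origin.

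Next I would set up the geometric decomposition. Using the inflation factor $\zeta$, I take $E_0$ to be the $\zeta$-dilate of a ball containing $I$, so that $\mathrm{supp}\,h\subset E_0$, and let $E_k$, $k\ge 1$, be the dyadic annuli of (\ref{eq:E_k}) with $R$ the radius of $E_0$; thus $\mathbb{R}=\bigcup_{k\ge0}E_k$ and $Zh=\sum_{k\ge0}(Zh)\chi_{E_k}$. The key structural facts about $Zh$ are three. (i) By the vanishing-moment condition (\ref{CZCond3}) of Definition~\ref{def: Calderon-Zygmund}, applied to the atom $h$ (which kills polynomials of degree $\le\N$), $Zh$ itself has vanishing moments up to order $\N$. (ii) On the near region $E_0$ the $L^2$-boundedness gives $\|(Zh)\chi_{E_0}\|_{L^2}\le\|Z\|_{L^2\to L^2}\|h\|_{L^2}$. (iii) On the far region, for $x\notin\mathrm{supp}\,h$ I write $Zh(x)=\int \K(x,y)h(y)\,dy$ and subtract the degree-$\N$ Taylor polynomial of $\K(x,\cdot)$ at the center of $I$; this is permissible precisely because $h$ annihilates polynomials of degree $\le\N$, and the Taylor remainder (whence the $(\N+1)!$) combined with the smoothness bound (\ref{CZCond2}) at order $\N+1$ yields the decay $|Zh(x)|\lesssim \gCZ\,R^{\,\N+2-1/p}\,|x|^{-(\N+2)}$. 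Facts (i)--(iii) say exactly that $Zh$ is a molecule adapted to $I$.

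The core of the argument is then to convert this molecule into an atomic decomposition with explicit coefficients. On each annulus $E_k$ I correct $(Zh)\chi_{E_k}$ by subtracting $\sum_{0\le\alpha\le\N}c_\alpha^k\,G_\alpha^k$, with the coefficients $c_\alpha^k$ read off from the moments of $Zh$ over $E_k$ so that the resulting piece has all vanishing moments up to order $\N$ and is therefore, after normalization, a genuine $(p,2)$-atom; the reproducing property (\ref{phiConditionEk}) makes this choice canonical, while the global vanishing moments from (i) guarantee that the correction terms assemble consistently and recover $Zh$. The uniform bound (\ref{eq:Calpha}), namely $|G_\alpha^k|\le\Calpha(2^kR)^{-\alpha}\le\Calphamax(2^kR)^{-\alpha}$, controls every correction and explains the appearance of $\Calphamax$ and of the factor $1+\Calphamax(\N+1)$ (one main piece plus $\N+1$ corrections) in $C_1$. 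Summing the $p$-th powers of the normalization constants splits into the two advertised contributions: the near part produces $C_1\zeta^{p(1/p-1/2)}\|Z\|_{L^2\to L^2}^p$, since $|E_0|^{1/p-1/2}\|h\|_{L^2}\lesssim\zeta^{1/p-1/2}$ by the size condition on $h$; the far part produces $C_2\gCZ^p$, where the weighted $L^2$-estimate of the decay in (iii) over the far region yields the integral $\int_{\text{far}}|x|^{-(2\N+4)}\,dx$, hence the factors $\tfrac12\big((\zeta+1)^{-(2\N+3)}+(\zeta-1)^{-(2\N+3)}\big)$ and $(2\N+3)^{-p/2}$, while the $b$-dependent sums $(2b-1)^{-p/2}$ and $\sum_{0\le\alpha\le\N}\big((2+2^{-\alpha})/(b-\alpha-1)\big)^p$ arise from pairing the corrections $G_\alpha^k$ against the $b$-weight. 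The requirement $b>2/p$ is exactly what makes these sums finite, and choosing $\zeta\ge\delta(b)$ as in (\ref{eq:b and delta}) balances the near and far contributions.

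I expect the main obstacle to be the constant bookkeeping in this last conversion step rather than any single inequality: one must verify simultaneously that each corrected annular piece has the correct support, the correct $L^2$-size of order $|E_k|^{1/2-1/p}$, and \emph{all} $\N+1$ vanishing moments, that the polynomial corrections telescope so that $\sum_k$ reproduces $Zh$ exactly (which is where the global moment condition (\ref{CZCond3}) is indispensable), and that the far-field weighted sums collapse to precisely the stated $C_2(b,\zeta)$. A secondary but genuine subtlety is the atoms-to-$H^p$ passage in the reduction step, which cannot rest on uniform boundedness over atoms alone and must be closed using the $L^2$-continuity of $Z$.
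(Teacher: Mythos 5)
Your outline reproduces the paper's three-step molecular architecture (Proposition~\ref{prop: CZO maps atoms to molecule}: atom goes to molecule; Proposition~\ref{prop: Molecule is in Hp}: molecule decomposed into atoms on the annuli \(E_k\) via the polynomials \(G_\alpha^k\) of Lemma~\ref{lemma : polynomial G}; then a density argument, which the paper closes via the dense class \(\Theta_{\N}\) and a Cauchy-sequence argument). But there is a genuine gap at the single point where all of the \(b\)-dependence of the theorem enters: your step (iii). You claim the far-field bound \(\lvert Zh(x)\rvert\lesssim \gCZ R^{\N+2-1/p}\lvert x\rvert^{-(\N+2)}\), i.e.\ decay of the fixed exponent \(\N+2\). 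That is what the kernel condition (\ref{CZCond2}) gives directly, but it is not the molecule condition the rest of the proof needs. The stated constant \(C_2(b,\zeta)\) contains \((2b-1)^{-p/2}\) and \(\sum_{0\le\alpha\le\N}\bigl((2+2^{-\alpha})/(b-\alpha-1)\bigr)^p\), and in the paper these arise by integrating the molecule bound \(\lvert Zh(x)\rvert\le C_3\gCZ\lvert I\rvert^{b-1/p}\lvert x-y_0\rvert^{-b}\) --- decay exponent \(b>2/p\), which is strictly larger than \(\N+2\) --- over the annuli: the norms \(\lVert M_k\rVert_{L^2}\) produce \((2b-1)^{-1/2}\), and the moment sums \(N_\alpha^k\) produce \((b-\alpha-1)^{-1}\). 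With decay exponent \(\N+2\) there is no ``\(b\)-weight'' anywhere in the molecule condition, so your assertion that the \(b\)-dependent sums ``arise from pairing the corrections \(G_\alpha^k\) against the \(b\)-weight'' is inconsistent with your own step (iii). Carried out honestly, your computation yields constants depending only on \(\N\) and \(\zeta\), hence a different and strictly weaker statement: in particular it loses the property, stressed in the paper's remark after the theorem, that the bound improves as \(b\to\infty\) (since \(C_2(b,\zeta)\to0\) while your constants would stay fixed).

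The missing idea is the upgrade of the decay exponent from \(\N+2\) to \(b\), and this is precisely what \(\delta(b)\) in (\ref{eq:b and delta}) exists for. In Proposition~\ref{prop: CZO maps atoms to molecule} the atom-side integral is computed exactly: \(\int_I \lvert y-y_0\rvert^{2\N+2}\lvert x-y\rvert^{-(2\N+4)}dy = \tfrac{4}{(2\N+3)\lvert I\rvert}\,g(\lvert w\rvert)\) with \(w=2(x-y_0)/\lvert I\rvert\) and \(g(s)=\tfrac1{2s}\bigl((s+1)^{-(2\N+3)}+(s-1)^{-(2\N+3)}\bigr)\); one then writes \(\sqrt{g(\lvert w\rvert)}=\sqrt{w^{2b}g(\lvert w\rvert)}\cdot\lvert w\rvert^{-b}\) and uses the claimed monotonicity of \(s\mapsto s^{2b}g(s)\) beyond \(\delta\) to freeze the first factor at \(\zeta\). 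This is where the factors \(\tfrac12\bigl((\zeta+1)^{-(2\N+3)}+(\zeta-1)^{-(2\N+3)}\bigr)\), \(\sqrt{2\N+3}\), and \(2^{\N+3}/(\N+1)!\) in \(C_2\) actually come from --- the exact integral over the support \(I\) of the atom, not from a ``weighted \(L^2\)-estimate of the decay over the far region'' as you state (that far-region integral would give only \(\zeta^{-(2\N+3)}\), without the \((\zeta\pm1)\) structure). In your write-up \(\delta\) never does any work --- ``balances the near and far contributions'' is not a step --- so the proposal as written cannot reach the stated constants. Your near-region accounting (the factor \(1+\Calphamax(\N+1)\) in \(C_1\), the term \(\zeta^{p(1/p-1/2)}\lVert Z\rVert_{L^2\to L^2}^p\)) and the telescoping of the \(G_\alpha^k\) corrections using the global vanishing moments do match the paper.
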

				
	\begin{nonumremark} 
In Theorem~\ref{thm:CZO has Hp bddness}, when \(p\), \(\N\) and \(\Calphamax\) are fixed, the constant \(C_1\) is a fixed number containing no variables, and \(C_2=C_2(b,\zeta)\) depends only on parameters~\(b\) and~\(\zeta\). Hence, the above bound 
			\[
				C_1 \zeta^{p\left(1/p-1/2\right)} \left\| Z \right\|_{L^2\to L^2}^p + C_2 \gCZ^p
			\]				
can be improved by taking sufficiently large \(b\). To see this, note that as the parameter \(b\) increases (and \(\zeta\) is fixed), the value of \(C_2\) decreases, so the second term, i.e. \(C_2 \gCZ^p\), decreases, whereas the first term stays the same. 
		\qed
	\end{nonumremark}
			

	\subsection{Auxiliary Results and Proof of Theorem~\ref{thm:CZO has Hp bddness}}
	\label{subsec: Necessities for Thm6}

	In this subsection, we present two propositions and the proof of Theorem~\ref{thm:CZO has Hp bddness} based on these.
	
	To show the boundedness of a Calder\'{o}n-Zygmund operator on \(H^p\) for any fixed \(0<p \leq 1\), we follow a classical approach for the atomic Hardy space \cite{[Bownik]AniHardy,[GarciaFrancia]WNI}. This approach consists of three steps. First, one shows that the operator maps atoms to ``molecules'' and then, in the second step, shows that every ``molecule'' belongs to the Hardy space. In the final step, by using the results from the previous steps and an atomic decomposition, it is shown that a Calder\'{o}n-Zygmund operator is bounded on \(H^p\). We follow this approach in such a way that a bound of the operator norm from \(H^p\) to \(H^p\) is a computable number. For a restricted case of \(p=1\), similar results are obtained in \cite{[BuiLaug]WFBonLH}.
	
	The following proposition corresponds to the first step, which shows that a Calder\'{o}n-Zygmund operator maps an atom to the function, with the specific conditions given below. This function is the so-called ``molecule'' \cite{[GarciaFrancia]WNI,[Kyr]WCMS}.
	
	\begin{proposition}
		Let \(0<p \leq 1\) be fixed and \(\N:=\Np\). Let \(Z\) be a Calder\'{o}n-Zygmund operator with a constant \(\gCZ\). Also, let \(\atom\) be an atom supported in a bounded interval~\(I\) centered at \(y_0\in\mathbb{R}\). Then for any fixed \(b>2/p\) and \(\zeta \geq \delta\) with \(\delta=\delta(b)\) as in (\ref{eq:b and delta}), we have \(Z \atom\in L^2\) and
		\[\lvert (Z\atom)(x)\rvert \leq \frac{C_3 \gCZ \lvert I\rvert^{b-1/p}}{\lvert x-y_0\rvert^b},\quad x\notin\zeta I,\]
		where \(C_3:=C_3(b,\zeta)\) is the constant defined by
		\begin{equation}
			C_3(b,\zeta) := \frac{2^{\N+3}}{(\N+1)! \sqrt{2\N+3}} \left\{\frac{1}{2\zeta}\left(\frac{1}{(\zeta+1)^{2\N +3}} + \frac{1}{(\zeta -1)^{2\N+3}}\right)\right\}^{1/2} \left(\frac{\zeta}{2}\right)^b.
			\label{eq:C_3}
		\end{equation}	
		\label{prop: CZO maps atoms to molecule}
	\end{proposition}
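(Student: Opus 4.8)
The plan is to carry out the classical ``atom-to-molecule'' estimate for a Calder\'on--Zygmund operator, but keeping every constant explicit. The membership $Z\atom\in L^2$ is immediate, since $Z$ is bounded on $L^2$ and $\atom\in L^2$ with $\|\atom\|_{L^2}\le|I|^{1/2-1/p}$ by Definition~\ref{def:atom}(b). For the pointwise estimate I fix $x\notin\zeta I$; as $\zeta\ge\delta>1$ this forces $x\notin I=\operatorname{supp}\atom$, so the integral representation of Definition~\ref{def: Calderon-Zygmund} gives $(Z\atom)(x)=\int_I\K(x,y)\,\atom(y)\,dy$. The decisive first move is to use the vanishing moments of the atom (Definition~\ref{def:atom}(c)): since $\int\atom(y)\,y^\beta\,dy=0$ for $0\le\beta\le\N$, I may subtract from $\K(x,\cdot)$ its degree-$\N$ Taylor polynomial centered at $y_0$ without altering the integral, so that
\[
(Z\atom)(x)=\int_I\Bigl[\K(x,y)-\sum_{\beta=0}^{\N}\frac{\partial_y^\beta\K(x,y_0)}{\beta!}(y-y_0)^\beta\Bigr]\atom(y)\,dy.
\]

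The next step is to bound the bracketed Taylor remainder. By Taylor's theorem it equals $\frac{1}{(\N+1)!}\partial_y^{\N+1}\K(x,\xi)(y-y_0)^{\N+1}$ for some $\xi$ lying on the segment joining $y_0$ and $y$, hence inside $I$; the smoothness bound~(\ref{CZCond2}) with $\alpha=\N+1$ then controls the remainder by $\frac{\gCZ}{(\N+1)!}\frac{|y-y_0|^{\N+1}}{|x-\xi|^{\N+2}}$. Since $\xi\in I$ while $x\notin\zeta I$, I would bound $|x-\xi|$ from below in terms of $|x-y_0|$ and $|I|$, the geometry of $x\notin\zeta I$ being exactly what makes the edge distances $|x-y_0|\pm|I|/2$ (equivalently the factors $\zeta\pm1$) enter. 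Applying the Cauchy--Schwarz inequality in $y$ over $I$, I control one factor by $\|\atom\|_{L^2}\le|I|^{1/2-1/p}$ and evaluate the other explicitly via $\int_I|y-y_0|^{2\N+2}\,dy=\frac{|I|^{2\N+3}}{2^{2\N+2}(2\N+3)}$; this moment integral is the origin of the $\frac{1}{\sqrt{2\N+3}}$ and the powers of $2$ in~(\ref{eq:C_3}), while the factor $\frac{1}{(\N+1)!}$ comes from the Taylor remainder.

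Assembling these ingredients produces an estimate of the form $|(Z\atom)(x)|\lesssim\gCZ\,|I|^{\N+2-1/p}\,(\text{distance})^{-(\N+2)}$, which I would then recast in the required scale-invariant form using the inequality $|x-y_0|>\tfrac{\zeta}{2}|I|$ valid for $x\notin\zeta I$; this passage to the exponent $b$ is what generates the factor $(\zeta/2)^b$ in~(\ref{eq:C_3}), and collecting all numerical constants gives $C_3(b,\zeta)$. I expect the conceptual structure to be routine; the real work, and the main obstacle, is the constant bookkeeping---choosing the lower bound for $|x-\xi|$ sharply and uniformly over the Taylor segment and over all $x\notin\zeta I$ so as to reproduce the precise symmetric combination $\frac{1}{2\zeta}\bigl((\zeta+1)^{-(2\N+3)}+(\zeta-1)^{-(2\N+3)}\bigr)$ under the square root, and carrying the exponent conversion so that the stated $C_3(b,\zeta)$ emerges with no loss.
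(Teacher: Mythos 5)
Your argument tracks the paper's proof through its first half: $Z\atom\in L^2$ from $L^2$-boundedness, the kernel representation for $x\notin\zeta I$, subtraction of the degree-$\N$ Taylor polynomial of $\K(x,\cdot)$ at $y_0$ using the atom's vanishing moments, the Lagrange remainder involving $\partial_y^{\N+1}\K$, and Cauchy--Schwarz against $\|\atom\|_{L^2}\le |I|^{1/2-1/p}$. The proposal fails at the final step, and the failure is not a matter of bookkeeping.

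As you say, your ingredients yield $|(Z\atom)(x)|\lesssim \gCZ\,|I|^{\N+2-1/p}\,|x-y_0|^{-(\N+2)}$, and you then propose to ``recast'' this with exponent $b$. That recasting is impossible. One always has $b>\N+2$: indeed $2/p-(\N+2)=(1/p-1-\N)+(1/p-1)\ge 0$, so $b>2/p\ge \N+2$. Hence passing from your estimate to the claimed one requires
\[
\left(\frac{|x-y_0|}{|I|}\right)^{b-\N-2}\le C \quad\text{uniformly for } x\notin\zeta I,
\]
with strictly positive exponent $b-\N-2$; the left side tends to infinity as $|x-y_0|\to\infty$, and the inequality $|x-y_0|>\tfrac{\zeta}{2}|I|$ bounds it from below, not above. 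Pointwise decay of order $\N+2$ --- which is all that hypothesis (\ref{CZCond2}), controlling only $\N+1$ derivatives of the kernel, can ever give --- can be weakened to any slower rate but never upgraded to the faster rate $b$. (A separate, smaller deviation: by bounding $|x-\xi|^{-(\N+2)}$ uniformly and pulling it out before Cauchy--Schwarz, you also forfeit the symmetric combination in $C_3$; the paper instead keeps $|x-y|^{-(2\N+4)}$ inside the integral, via $|x-y|\le 2|x-\tilde y|$, and evaluates $\int_I |y-y_0|^{2\N+2}|x-y|^{-(2\N+4)}\,dy$ exactly, which is the source of $(\zeta+1)^{-(2\N+3)}+(\zeta-1)^{-(2\N+3)}$.)

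You should also know that the obstacle you hit is intrinsic to the statement, not to your route. The paper's proof crosses the same gap only by asserting that $s\mapsto s^{2b}g(s)$, with $g(s)=\frac{1}{2s}\left(\frac{1}{(s+1)^{2\N+3}}+\frac{1}{(s-1)^{2\N+3}}\right)$, is decreasing on $[\delta,\infty)$; this is false, since $s^{2b}g(s)\sim s^{2b-(2\N+4)}\to\infty$ as $s\to\infty$, precisely because $b>\N+2$. Moreover the conclusion itself fails in general: for $p=1$ the Hilbert transform satisfies Definition~\ref{def: Calderon-Zygmund} (with $\Np=0$), and the atom $\atom(y)=\frac{\sqrt{3}}{2}\,y\,\chi_{[-1,1]}(y)$ has $\int y\,\atom(y)\,dy\neq 0$, so $(Z\atom)(x)=c\,x^{-2}+O(|x|^{-3})$ with $c\neq 0$, which is incompatible with a bound $C|x|^{-b}$, $b>2$, for large $|x|$. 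So the step you deferred as ``the real work'' is exactly the point at which the claimed estimate outruns the hypotheses; it is obtainable (by your route or the paper's) only for $|x-y_0|$ comparable to $\zeta|I|$, not on all of $(\zeta I)^{c}$.
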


	\begin{proof}
		Fix \(x\notin \zeta I\). Since \(Z\) is a Calder\'{o}n-Zygmund operator defined with kernel \(\K\) and since \(\atom \in L^2\) is an atom having vanishing moments, we have
		\begin{equation*}
			\lvert Z \atom(x) \rvert = \biggl\lvert \int_I \left( \K(x,y) - \sum\limits_{0 \leq \alpha \leq \N} \frac{(y-y_0)^\alpha}{\alpha!} \partial_y^\alpha \K(x,y_0) \right) \atom(y) dy \biggr\rvert.
		\end{equation*}
		Then, \(
		\lvert Z\atom(x) \rvert \leq (1/(\N+1)!)\int_I \lvert \partial_y^{\N+1} \K (x,\tilde{y}) \rvert \lvert y-y_0\rvert^{\N+1} \lvert \atom(y) \rvert dy
		\)
		for some \(\tilde{y}\) between \(y\) and \(y_0\).
		From the condition (\ref{CZCond2}) of the Calder\'{o}n-Zygmund operator and the fact that \(\lvert x-y \rvert \leq 2 \lvert x-\tilde{y} \rvert\), we have, with the constant \(\gCZ\),
		\begin{eqnarray*}
			\lvert Z\atom(x) \rvert & \leq &\frac{2^{\N+2} \gCZ}{(\N+1)!} \displaystyle\int_I \frac{\lvert y-y_0 \rvert^{\N+1}}{\lvert x-y \rvert^{\N+2}} \lvert \atom(y) \rvert dy \\
			& \leq & \frac{2^{\N+2} \gCZ}{(\N+1)!} \left(\int_I \frac{\lvert y-y_0 \rvert ^{2(\N+1)}}{\lvert x-y \rvert^{2(\N+2)}} dy \right)^{1/2} \left\|\atom\right\|_{L^2}.
		\end{eqnarray*}
		Let \(w=2(x-y_0)/\lvert I \rvert\) and \(z=2(y-y_0)/\lvert I \rvert\). Then by using the change of variable \(t=w-z\), the integral in the last term of the above inequality is computed as
		\begin{equation*}
			\int_I \frac{\lvert y-y_0 \rvert^{2\N+2}}{\lvert x-y \rvert^{2\N+4}} dy = \frac{2}{\lvert I \rvert} \displaystyle\int_{-1}^1 \frac{z^{2\N+2}}{(w-z)^{2\N+4}} dz = \frac{2}{\lvert I \rvert} \frac{2}{2\N+3} g\left(\lvert w \rvert \right)
		\end{equation*}
		where \(g(\lvert w \rvert) := (1/(\lvert w \rvert + 1)^{2\N+3} + 1/(\lvert w \rvert-1)^{2\N+3})/2\lvert w \rvert\).
		From this and the fact that \(\left\| \atom \right\|_{L^2} \leq \lvert I \rvert^{1/2 - 1/p}\), we have the estimation 
		\begin{equation*}
			\lvert Z \atom(x) \rvert \leq 2^{\N+2} \gCZ \frac{2\lvert I \rvert^{-1/p}}{(\N+1)! \sqrt{2\N+3}} \sqrt{w^{2b} g(\lvert w \rvert)} \frac{1}{\lvert w \rvert^b}.
			\label{EstimationZaFFFINAL}
		\end{equation*}
		Since \(x\notin \zeta I\), from the definition of \(w\), we have \(\lvert w\rvert \geq \zeta \geq \delta\). Also, since \(s \mapsto s^{2b} g(s)\) is decreasing for \(s \geq \delta\), it follows that \(
		\sqrt{w^{2b} g(\lvert w \rvert)}  \leq \sqrt{\zeta^{2b} g(\zeta)}.
		\)
		By using this inequality, we obtain the stated bound for \(\lvert Z \atom(x) \rvert\).
	\end{proof}
	
	The following proposition corresponds to the second step, which shows that if the function has a sufficient decay with sufficient vanishing moments as given below, it belongs to the \(H^p\) space. The proof uses lemmas in Appendix~\ref{appx: Necessities for Thm6}.
	
	\begin{proposition}
		Let \(0<p \leq 1\) be fixed and \(\N:=\Np\). Suppose \(M\in L^2\) and \(I\) is a bounded interval centered at \(y_0\in \mathbb{R}\). For any fixed \(b>2/p\) and \(\zeta \geq \delta\) where \(\delta= \delta(b)\) is defined in (\ref{eq:b and delta}), if
		\begin{equation}
			\lvert M(x) \rvert \leq \frac{C_M \lvert I \rvert ^{b-1/p}}{\lvert x-y_0 \rvert^b},\quad x\notin \zeta I,
			\label{MBddCondition}
		\end{equation}
		for some \(C_M>0\) and 
		\begin{equation}
			\displaystyle\int_{\mathbb{R}} M(x) x^\alpha dx =0, \quad \forall \, 0 \leq \alpha \leq \N,
			\label{MVanishingCondition}
		\end{equation}
		then \(M\in H^p\) with
		\begin{equation*}
			\left\| M \right\|_{H^p}^p \leq C_1 (\zeta \lvert I \rvert)^{p \left( 1/p - 1/2\right)} \left\| M \right\|_{L^2}^p +  C_M^p C_4
		\end{equation*}
		where \(C_1=\left( 1+ \Calphamax \left( \N +1 \right)\right)^p\), as before, with \(\Calphamax\) as in (\ref{eqn : g}), and \(C_4:=C_4(b,\zeta)\) is defined by 
		\begin{equation}
			C_4(b,\zeta):= 2^{bp}  \zeta^{1-bp} \left( 2\cdot (2b-1)^{-p/2} C_1  + 3 \Calphamax^p \sum\limits_{0 \leq \alpha \leq \N }  \left(\frac{2 + 2^{-\alpha}}{b-\alpha-1} \right)^p \right).
			\label{eq:C_4}
		\end{equation}
		\label{prop: Molecule is in Hp}
	\end{proposition}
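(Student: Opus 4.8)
The plan is to realize $M$ as an $H^p$-convergent sum of normalized atoms and then read the quasi-norm bound directly off Definition~\ref{def:AtomicHardy}. I would follow the classical molecular-decomposition argument for the atomic Hardy space (as in \cite{[GarciaFrancia]WNI,[Bownik]AniHardy}): cut $M$ into annular pieces about its center and repair the vanishing moments destroyed by the cut using the correction polynomials supplied by Lemma~\ref{lemma : polynomial G}. Since $0<p\le1$, the quasi-norm is $p$-subadditive, so once $M=\sum_k\lambda_k a_k$ with each $a_k$ an atom in the sense of Definition~\ref{def:atom}, one has $\|M\|_{H^p}^p\le\sum_k|\lambda_k|^p$, and the whole problem reduces to estimating the coefficients $\lambda_k$.

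Concretely, after translating so that $y_0=0$, I would set $R:=\zeta\lvert I\rvert/2$ and use the annuli $E_k$ of (\ref{eq:E_k}), so that $E_0=\zeta I$ is exactly the region where the pointwise decay (\ref{MBddCondition}) is \emph{not} assumed, while (\ref{MBddCondition}) holds on $E_k$ for $k\ge1$. Writing $m_\alpha^k:=\int_{E_k}M(x)x^\alpha\,dx$ and the tail sums $s_\alpha^k:=\sum_{j\ge k}m_\alpha^j$ (which satisfy $s_\alpha^0=0$ by (\ref{MVanishingCondition}) and $s_\alpha^k\to0$), I would define the polynomial corrections $P_k:=-\sum_{\alpha=0}^{\N}s_\alpha^{k+1}\,G_\alpha^{k+1}/\lvert E_{k+1}\rvert$ and set $A_k:=M\chi_{E_k}+P_{k-1}-P_k$. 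Because $\int G_\alpha^{k}x^\beta\,dx=\lvert E_k\rvert\,\delta_{\alpha\beta}$ for $0\le\beta\le\N$ by the normalization in Lemma~\ref{lemma : polynomial G}, a direct check gives $\int A_k(x)x^\beta\,dx=m_\beta^k-s_\beta^k+s_\beta^{k+1}=0$, so each $A_k$ has all vanishing moments up to order $\N$ and is supported in $E_k\cup E_{k+1}$; the telescoping $\sum_k(P_{k-1}-P_k)=0$ then yields $M=\sum_kA_k$. Each $A_k$ is therefore a multiple $\lambda_ka_k$ of an atom, with $\lambda_k\asymp\|A_k\|_{L^2}\,\lvert E_k\cup E_{k+1}\rvert^{1/p-1/2}$.

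The estimates then split via $p$-subadditivity $\|A_k\|_{L^2}^p\le\|M\chi_{E_k}\|_{L^2}^p+\|P_{k-1}\|_{L^2}^p+\|P_k\|_{L^2}^p$. For the central index $k=0$ I would bound $\|M\chi_{E_0}\|_{L^2}\le\|M\|_{L^2}$ and absorb the $(\N+1)$ correction terms (each controlled through $\lvert G_\alpha^k\rvert\le\Calpha(2^kR)^{-\alpha}$ of (\ref{eq:Calpha})) into the factor $C_1=(1+\Calphamax(\N+1))^p$; together with the support size $\lvert E_0\cup E_1\rvert\asymp\zeta\lvert I\rvert$ this produces the first summand $C_1(\zeta\lvert I\rvert)^{p(1/p-1/2)}\|M\|_{L^2}^p$. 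For $k\ge1$ I would insert the decay (\ref{MBddCondition}) to estimate both $\|M\chi_{E_k}\|_{L^2}$ and, via Cauchy--Schwarz, the moments $\lvert m_\alpha^k\rvert$ and hence the tail sums $\lvert s_\alpha^k\rvert$ entering $P_{k-1},P_k$. Summing the resulting geometric series in $k$ is where the hypothesis $b>2/p$ is decisive: since $\N=\lfloor1/p-1\rfloor$ gives $b>2/p\ge2\N+2>\alpha+1$ for every $\alpha\le\N$, the exponent $b-\alpha-1$ is positive, the series $\sum_k 2^{-k(b-\alpha-1)}$ converge, and they produce exactly the factors $(b-\alpha-1)^{-1}$ and $(2b-1)^{-1}$ appearing in $C_4$ of (\ref{eq:C_4}).

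The main obstacle I expect is bookkeeping rather than conceptual: tracking every numerical constant so that the two correction terms $P_{k-1},P_k$, which straddle adjacent annuli and carry the tail moments, recombine after summation over $k$ into precisely the quantity $C_M^pC_4$, with its sum $\sum_{0\le\alpha\le\N}\bigl((2+2^{-\alpha})/(b-\alpha-1)\bigr)^p$ and prefactor $2^{bp}\zeta^{1-bp}$. Getting these constants sharp requires keeping the support radii, the normalizations $1/\lvert E_k\rvert$, and the bounds $\Calpha(2^kR)^{-\alpha}$ aligned throughout, and cleanly separating the contribution that feeds the $C_1\|M\|_{L^2}^p$ term from the one that feeds the $C_M^pC_4$ term.
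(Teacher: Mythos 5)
Your decomposition is, after regrouping, identical to the paper's: with the paper's notation $P_k^{\mathrm{paper}}=\sum_\alpha m_\alpha^k G_\alpha^k$ (moment projection of $M\chi_{E_k}$ onto $E_k$), $N_\alpha^k=\sum_{j\ge k+1}\int_{E_j}M(x)x^\alpha dx$ and $h_\alpha^k=G_\alpha^{k+1}/\lvert E_{k+1}\rvert-G_\alpha^k/\lvert E_k\rvert$, your atom satisfies $A_k=(M\chi_{E_k}-P_k^{\mathrm{paper}})+\sum_{0\le\alpha\le\N}N_\alpha^k h_\alpha^k$, since $s_\alpha^k=m_\alpha^k+s_\alpha^{k+1}$. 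So the annular cut, the Lemma~\ref{lemma : polynomial G} corrections, the use of (\ref{MVanishingCondition}) through $s_\alpha^0=0$, and the geometric series governed by $b-\alpha-1>0$ are exactly the paper's; your telescoping $\sum_k(P_{k-1}-P_k)=0$ is the paper's Abel summation of $\sum_k P_k$ written differently. Your moment check $\int A_k x^\beta dx=m_\beta^k-s_\beta^k+s_\beta^{k+1}=0$ and the support claim are correct.

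The one genuine defect is the constants, and it is structural rather than ``bookkeeping'': the proposition asserts the specific values $C_1$ and $C_4$, and your grouping into a \emph{single} atom per annulus cannot produce them. Your $A_k$ is supported in $E_k\cup E_{k+1}$, so its atomic normalization must use $\lvert E_k\cup E_{k+1}\rvert^{1/p-1/2}$; but $\lvert E_0\cup E_1\rvert=2\zeta\lvert I\rvert$ and $\lvert E_k\cup E_{k+1}\rvert=3\lvert E_k\rvert$ for $k\ge1$, so the term fed by $\lVert M\rVert_{L^2}$ acquires an extra factor $2^{1-p/2}$ relative to $C_1(\zeta\lvert I\rvert)^{p(1/p-1/2)}\lVert M\rVert_{L^2}^p$, and the decay-fed terms acquire $3^{1-p/2}$, no matter how carefully you track constants inside your setup. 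The paper avoids this precisely by keeping the two families separate: the pieces $M\chi_{E_k}-P_k^{\mathrm{paper}}$ are normalized on $E_k$ alone (Lemma~\ref{claim:M_k-P_k atom}), and only the difference functions $h_\alpha^k$ pay the $\lvert E_k\cup E_{k+1}\rvert$ price, which is where the $3^{1/p}$ in (\ref{eqn:claimresult2}) comes from. Since the explicit values of $C_1$ and $C_4$ are the point of the proposition (they propagate into Theorem~\ref{thm:CZO has Hp bddness}, Theorem~\ref{Theorem:Invertibility of U} via $\Mp$, and the numerical verification for the Mexican hat in Section~\ref{sec:Example}), you should split $A_k$ back into those two pieces before normalizing; with that single change your argument becomes the paper's proof, and the stated bound follows verbatim. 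If only the qualitative conclusion $M\in H^p$ with a bound of this shape were wanted, your version would suffice as is, at the cost of a bounded inflation of the constants.
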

	
	\begin{proof}
		We may assume that \(y_0=0\) in (\ref{MBddCondition}) by using an appropriate translation. We use collections of sets \(\{E_k\}_{k\in \mathbb{N}_0}\) and of functions \(\{G_\alpha^k\}_{0 \leq \alpha \leq \N, k\in \mathbb{N}_0}\) as in Lemma~\ref{lemma : polynomial G}, with \(N\) and \(R\) chosen as \(\N\) and \(\zeta \lvert I \rvert/2\), respectively. Let \(M\in L^2\) satisfy all the assumptions in the proposition. For each \(k\in \mathbb{N}_0\), we define two functions needed for the proof by using the set \(E_k\) and functions \(\{G_\alpha^k\}_{0\leq \alpha \leq \N}\) as follows: 		
		\begin{equation}
			M_k(x) :=M(x)\chi_{E_k}(x),
			\label{eqn:M_k}
		\end{equation}
		\begin{equation}
			P_k(x):=\sum\limits_{0 \leq \alpha \leq \N } m_\alpha^k G_\alpha^k(x)
			\label{eqn:P_k},
		\end{equation} 
		where \(m_\alpha^k := (1/\lvert E_k \rvert) \int M_k(x) x^\alpha dx\).
		
		Since \(M_k\) and \(P_k\) are supported in \(E_k\) for each \(k\in \mathbb{N}_0\), and the sets \(\{E_k\}\) are disjoint, \(M\) can be written as 
		\[M(x) = \sum\limits_{k=0}^\infty M_k(x) = \sum\limits_{k=0}^\infty \left( M_k(x)-P_k(x)\right) + \sum\limits_{k=0}^\infty P_k(x). \]
		Since \(\left\| M\right\|_{H^p}^p\le \left\| \sum_{k=0}^\infty \left(M_k-P_k\right)\right\|_{H^p}^p+ \left\| \sum_{k=0}^\infty P_k\right\|_{H^p}^p\),  to obtain \(\left\| M\right\|_{H^p}^p\), we will calculate \(\left\| \sum_{k=0}^\infty \left(M_k-P_k\right)\right\|_{H^p}^p\) and \(\left\| \sum_{k=0}^\infty P_k\right\|_{H^p}^p\)by decomposing them as \(\left(p,2\right)\)-atoms. 
				
		For \(k \in \mathbb{N}_0\), let \(\lambda_k\) satisfy
		\begin{equation}
			\lambda_k\le \begin{cases}
				\left(1+\Calphamax \left( \N +1 \right) \right) (\zeta \lvert I \rvert)^{1/p-1/2}\left\|M \right\|_{L^2} ,  & k=0,\\
				C_M \left(1+\Calphamax \left( \N +1\right)\right)(2^{k-1} \zeta)^{1/p-b} \left(\frac{2^{2b}-2}{2b-1}\right)^{1/2}, & k\ge 1,
			\end{cases}
		\label{eqn:claimresult1}
		\end{equation} 
Then since \((M_k-P_k)/\lambda_k\) is a \(\left(p,2\right)\)-atom by Lemma~\ref{claim:M_k-P_k atom}, we have
		\[
		\left\|\sum\limits_{k=0}^\infty \left(M_k-P_k\right) \right\|_{H^p}^p = \left\| \sum\limits_{k=0}^\infty \lambda_k \left(\frac{M_k-P_k}{\lambda_k}\right) \right\|_{H^p}^p \leq \sum\limits_{k=0}^\infty \lvert \lambda_k \rvert^p.
		\]
		Since \(\sum_{k=0}^\infty 2^{(1-bp)k}=1/(1-2^{1-bp})< 2\) for \(b>2/p\), with (\ref{eqn:claimresult1}), this gives
		\begin{eqnarray}
			\left\|\sum\limits_{k=0}^\infty \left(M_k-P_k\right) \right\|_{H^p}^p & \leq & (1+\Calphamax \left(\N +1 \right))^p  \nonumber \\ 
			&{ \times }& \left( (\zeta \lvert I \rvert)^{1-p/2}\left\|M \right\|_{L^2}^p+2 C_M^p \zeta^{1-bp} 2^{bp} (2b-1)^{-p/2} \right). 
			\label{HpNormofMk-Pk}
		\end{eqnarray}
		
		From the fact that \(G_\alpha^k\) is supported in \(E_k\) and the definition of \(P_k\) in (\ref{eqn:P_k}), we have
		\begin{eqnarray*}
			\sum\limits_{k=0}^\infty P_k(x) & = & \sum\limits_{k=0}^\infty \left( \sum\limits_{0 \leq \alpha \leq \N } m_\alpha^k G_\alpha^k(x)\right) = \sum\limits_{0 \leq \alpha \leq \N} \sum\limits_{k=0}^\infty \left(m_\alpha^k \lvert E_k \rvert \right) \left( \frac{1}{\lvert E_k \rvert}G_\alpha^k(x)\right)\\
			& = & \sum\limits_{0 \leq \alpha \leq \N} \sum\limits_{k=0}^\infty \left( \sum\limits_{j=k+1}^\infty m_\alpha^j \lvert E_j \rvert \right) \left( \frac{1}{\lvert E_{k+1}\rvert} G_\alpha^{k+1}(x) - \frac{1}{\lvert E_k \rvert} G_\alpha^k(x) \right)
		\end{eqnarray*}
		where the last equality holds due to \(\sum_{j=0}^\infty m_\alpha^j \lvert E_j \rvert = 0\).
		For \(0 \leq \alpha \leq \N\) and \(k\in\mathbb{N}_0\), let us define
		\begin{equation}
			N_\alpha^k := \sum\limits_{j=k+1}^\infty m_\alpha^j \lvert E_j \rvert \quad \text{ and }\quad h_\alpha^k(x) := \frac{1}{\lvert E_{k+1} \rvert} G_\alpha^{k+1}(x) - \frac{1}{\lvert E_k \rvert} G_\alpha^k(x),
			\label{eq: N and h}
		\end{equation}
 		and let \(\mu_\alpha^k\) satisfy 
		\begin{equation}
			0<\mu_\alpha^k \leq 3^{1/p} \Calphamax \left(1 + 2^{-\alpha-1} \right) (2^{k-1} \zeta \lvert I \rvert)^{1/p-\alpha-1}.		
		\label{eqn:claimresult2}
		\end{equation}
		Then, since \(h_\alpha^k/\mu_\alpha^k\) is a \(\left(p,2\right)\)-atom by Lemma~\ref{claim:P_k atom},
		\begin{equation*}
			\left\| \sum\limits_{k=0}^\infty P_k \right\|_{H^p}^p = \left\|\sum\limits_{0 \leq \alpha \leq \N } \sum\limits_{k=0}^\infty \left( N_\alpha^k \mu_\alpha^k \right) \,  \left(h_\alpha^k/\mu_\alpha^k\right) \right\|_{H^p}^p \leq  \sum\limits_{0 \leq \alpha \leq \N} \sum\limits_{k=0}^\infty \lvert N_\alpha^k \mu_\alpha^k\rvert^p.
		\end{equation*}
		Using the assumption (\ref{MBddCondition}), with \(\widetilde{E}_k:=\cup_{j=k+1}^\infty E_j\), we have
		\[
		\lvert N_{\alpha}^k \rvert = \Biggl\lvert \sum\limits_{j=k+1}^\infty \displaystyle\int M_j(x) x^\alpha dx \Biggr\rvert \leq  \int_{\widetilde{E}_k} \lvert M(x)\rvert \lvert x \rvert^\alpha dx 
		\leq C_M\lvert I \rvert^{b-1/p} \displaystyle\int_{\widetilde{E}_k} \lvert x \rvert^\alpha/ \lvert x \rvert^b dx.
		\]
		Since \(b>2/p\), \(0 \leq \alpha \leq \N=\left\lfloor 1/p -1 \right\rfloor\), we get \(b-\alpha-1>0\) and, by definition of \(E_j\), 	
		\begin{equation*}
			\lvert N_\alpha^k \rvert \leq C_M  \left( \frac{2}{b-\alpha-1} \right) (2^{k-1} \zeta)^{-b+\alpha+1} \lvert I \rvert^{\alpha+1-1/p}.
		\end{equation*}
		Using the bound for \(\mu_\alpha^k\), and the estimation \(\sum_{k=0}^\infty 2^{(1-bp)k}< 2\) again, this gives
		\begin{equation}
			\left\| \sum\limits_{k=0}^\infty P_k \right\|_{H^p}^p <  3 C_M^p 2^{bp} \zeta^{1-bp} \Calphamax^p \sum\limits_{0 \leq \alpha \leq \N} \left(\frac{2+ 2^{-\alpha}}{b-\alpha-1}  \right)^p.
			\label{HpNormofPk}
		\end{equation}
		Combining the bounds in (\ref{HpNormofMk-Pk}) and (\ref{HpNormofPk}) gives the stated bound for \(\left\| M\right\|_{H^p}^p\).
	\end{proof}

	The following proof of Theorem~\ref{thm:CZO has Hp bddness} corresponds to the final step, which combines the previous propositions to show that the Calder\'{o}n-Zygmund operator \(Z\) is bounded on \(H^p\). In the proof, we use an argument from \cite{[Bownik]AniHardy}.

	\begin{proof}[Proof of Theorem~\ref{thm:CZO has Hp bddness}]	
	Let \(Z\) be the Calder\'{o}n-Zygmund operator with a constant \(\gCZ\). 
	
	For each \((p,2)\)-atom \(\atom\) supported in an interval~\(I\) centered at \(y_0\), by Proposition~\ref{prop: CZO maps atoms to molecule},  we see that \(Z\atom\in L^2\) and 
	\begin{equation*}
		\lvert (Z\atom)(x)\rvert \leq \frac{C_3 \gCZ \lvert I \rvert^{b-1/p}}{\lvert x-y_0 \rvert^b},\quad \text{for } x\in\left(\zeta I\right)^c
	\end{equation*}
	where \(C_3=C_3(b,\zeta)\) is as in (\ref{eq:C_3}), and \(b>2/p\) and \(\zeta \geq \delta=\delta(b)\) are fixed.

Since \(\atom\) is an atom with sufficient vanishing moments, \(Z \atom\) satisfies the vanishing moment condition \(\int_{\mathbb{R}} Z \atom(x) \,x^\alpha dx = 0\) for all \(0 \leq \alpha \leq \N\) by the condition (\ref{CZCond3}) of the Calder\'{o}n-Zygmund operator. Hence, by Proposition~\ref{prop: Molecule is in Hp} with \(M:=Z\atom\), and \(C_M:= C_3 \gCZ\), we have \(Z\atom\in H^p\) with 
	\begin{equation}
		\left\| Z \atom \right\|_{H^p}^p \leq C_1 \zeta^{p(1/p-1/2)} \lVert Z \rVert_{L^2\to L^2}^p + C_3^p C_4 \gCZ^p,
\label{ZaIsBoundedNew}
	\end{equation}
	where the size condition of the atom \(\atom\) is used to replace \(\left\| Z \atom \right\|_{L^2}\) by \(\lVert Z \rVert_{L^2\to L^2}\). By cancelling out the terms \((\zeta/2)^{bp}\) in \(C_3^p\) and \((2/\zeta)^{bp}\) in \(C_4\), we see that \(C_2\) in the statement of Theorem~\ref{thm:CZO has Hp bddness} is in fact the same as \(C_3^p C_4\).
	
	To extend the bound of \(\left\| Z \atom \right\|_{H^p}\) in (\ref{ZaIsBoundedNew}) to that of \(\lVert Zf \rVert_{H^p}\), for any \(f\in H^p\), we consider the function space
	\begin{equation*}
		\Theta_{\N} := \left\{f\in L^2: f \text{ has a compact support, and } \int f(x)x^\alpha dx = 0, \forall 0 \leq \alpha \leq \N\right\}.
	\end{equation*}
	This is a well-known dense subspace of the Hardy space \(H^p\) \cite{[Bownik]AniHardy,[GarciaFrancia]WNI,[Stein]HA}. 	
		Let \(f\in\Theta_{\N}\). Since \(f\in H^p\), for any fixed \(\epsilon>0\), there exists an atomic decomposition such that \(f = \sum_{k=0}^{\infty} \lambda_k \atom_k\), where \(\atom_k\) are \((p,2)\)-atoms and \(\sum_{k=0}^{\infty}\lvert \lambda_k \rvert^p < (1+\epsilon) \left\| f \right\|_{H^p}^p< \infty\). 
		
		Let \(f_N = \sum_{k=0}^{N} \lambda_k \atom_k\), and let \(g_N:=Z(f_N)\). For \(N_1< N_2\), by using the bound in (\ref{ZaIsBoundedNew}) with \(C_3^p C_4\) replaced by \(C_2\), we have
		\[\left\| g_{N_2} - g_{N_1} \right\|_{H^p}^p = \left\| Z(f_{N_2} - f_{N_1}) \right\|_{H^p}^p \leq \sum\limits_{k=N_1+1}^{N_2} \lvert \lambda_k \rvert^p \left\| Z \atom_k \right\|_{H^p}^p\le \widetilde{C} \sum\limits_{k=N_1+1}^{N_2} \lvert \lambda_k \rvert^p,
		\]
		with \(\widetilde{C}:= C_1 \, \zeta^{p\left(1/p-1/2\right)} \left\|Z \right\|_{L^2\to L^2}^p + C_2 \gCZ^p\). Since \(\sum_{k=0}^{\infty}  \lvert \lambda_k \rvert^p < \infty\), \(g_N\) is a Cauchy sequence in \(H^p\), and thus converges to some element in \(H^p\). 
		
		Also, since \(Zf = \sum_{k=0}^{\infty} \lambda_k (Z \atom_k)\) converges in \(H^p\), for example, by the argument in Theorem~9.8 of \cite{[Bownik]AniHardy}, we conclude that \(g_N\) converges to \(Zf\) in \(H^p\). Thus, using (\ref{ZaIsBoundedNew}) again as above, we get
		\[
		\left\| Zf \right\|_{H^p}^p = \lim\limits_{N\to\infty} \left\| \sum\limits_{k=0}^N \lambda_k Z \atom_k \right\|_{H^p}^p \leq \widetilde{C} \sum\limits_{k=0}^\infty \lvert \lambda_k \rvert^p < \widetilde{C}(1+\epsilon) \left\| f \right\|_{H^p}^p.
		\]
		Since this holds for any arbitrary \(\epsilon>0\), 
		\(\left\| Zf \right\|_{H^p}^p \leq \widetilde{C} \left\| f \right\|_{H^p}^p\) for all  \(f\in\Theta_{\N}\). Furthermore, since \(\Theta_{\N}\) is dense in \(H^p\), we have the same bound for any \(f\in H^p\), i.e.,
		\begin{equation*}
			\left\| Zf \right\|_{H^p}^p \leq \widetilde{C} \left\| f \right\|_{H^p}^p, \quad \forall f \in H^p.
		\end{equation*}
		This completes the proof.
	\end{proof}

	\section{Boundedness and Invertibility of Wavelet Frame Operator on $H^p(\mathbb{R})$}
	\label{sec: Proof of Theorem2,3}
	
	This section consists of proofs of Theorem~\ref{Theorem:Bdd of U} and \ref{Theorem:Invertibility of U}. Recall that Theorem~\ref{Theorem:Bdd of U} gives sufficient conditions for the wavelet frame operator \(U\) to be a Calder\'{o}n-Zygmund operator, and have a bounded extension to \(H^p\). Sufficient conditions for the wavelet frame operator \(U\) to be invertible are given in Theorem~\ref{Theorem:Invertibility of U}.
	
	\begin{proof}[Proof of Theorem~\ref{Theorem:Bdd of U}]
		Let \(\psi,\phi\in L^2\) satisfy all the assumptions in Theorem~\ref{Theorem:Bdd of U}. Then the wavelet frame operator \(U = U_{\psi,\phi}\) is a Calder\'{o}n-Zygmund operator by Theorem~\ref{Result: L2boundedness},~\ref{thm:CZ kernel smoothness},~and \ref{thm:CZ kernel vanishing} with a constant 
		\begin{equation}
			\CZ(\psi,\phi):=\max\limits_{0 \leq \alpha \leq  \N+1} \CZ_{\alpha}(\psi,\phi)
			\label{eq:CZ_2}
		\end{equation}
		where \(\CZ_{\alpha}(\psi,\phi)\) is defined as in (\ref{eq:CZ_2,alpha}). Thus, by Theorem~\ref{thm:CZO has Hp bddness}, \(U\) is a bounded operator on \(H_p\) with the operator norm satisfying
			\[\left\|U \right\|_{H^p \to H^p}^p \leq C_1 \zeta^{p(1/p-1/2)} \left\| U \right\|_{L^2\to L^2}^p + C_2 \CZ(\psi,\phi)^p.\]
		Here, the values \(C_1\) and \(C_2 =C_2(b,\zeta)\) are the same as in Theorem~\ref{thm:CZO has Hp bddness}: 
		\begin{equation}
			C_1 = \left( 1+ \Calphamax \left( \N +1 \right)\right)^p,
			\label{eq:C_1}
		\end{equation}
		\begin{eqnarray}
			C_2(b,\zeta)& = & \left\{\frac{2^{\N+3}}{(\N+1)! \sqrt{2\N+3}}\right\}^p \left\{ \frac{1}{2} \left( \frac{1}{(\zeta + 1)^{2\N+3}} + \frac{1}{(\zeta -1)^{2\N +3}}\right) \zeta^{\frac{2}{p}-1}\right\}^{p/2} \nonumber \\
			&  &  \times \left\{ 2\cdot (2b-1)^{-p/2} C_1 + 3 \Calphamax^p \sum\limits_{0 \leq \alpha \leq \N }  \left(\frac{2 + 2^{-\alpha}}{b-\alpha-1} \right)^p\right\},
			\label{eq:C_2}
		\end{eqnarray}
		and they are finite for any fixed \(b>2/p\) and \(\zeta \geq \delta(b)\), with \(\delta(b)\) and \(\Calphamax\) given as in (\ref{eq:b and delta}) and (\ref{eqn : g}), respectively. Hence, the above upper bound for \(\left\|U \right\|_{H^p \to H^p}^p\) is finite for any fixed \(b\) and \(\zeta\), and this completes the proof.
	\end{proof}
	
	\begin{proof}[Proof of Theorem~\ref{Theorem:Invertibility of U}]
		Let \(\psi, \phi, \psi^*, \phi^* \in L^2\) satisfy all the assumptions of Theorem~\ref{Theorem:Invertibility of U}. Since \(\psi,\phi\) satisfy the decay condition (\ref{eq:BLdecay}), we see that \(U=s_\psi \circ t_\phi\) in \(L^2\) from Definition~\ref{def:synthesis,analysis operator}. Also, since \(\psi^*, \phi^* \in L^2\) are exact duals in \(L^2\), \(s_{\psi^*}\circ t_{\phi^*} = Id\) in \(L^2\). Then, by linearity of the operators, we have 
		\[ U- Id = s_\psi \circ t_\phi - s_{\psi^*} \circ t_{\phi^*} = s_{\psi-\psi^*} \circ t_{\phi} + s_{\psi^*} \circ t_{\phi-\phi^*} = U_{\psi-\psi^*,\phi} + U_{\psi^*,\phi-\phi^*}.\]
		Since \(\psi,\psi^*,\phi,\phi^* \) satisfy the decay condition (\ref{eq:BLdecay}) and the regularity condition, \(U-Id \) is a bounded operator on \(L^2\) by Theorem \ref{Result: L2boundedness}. Moreover, since they satisfy all the conditions in Theorem \ref{Theorem:Bdd of U}, the operator \(U-Id\) has a bounded extension to \(H^p\) with the following bound on the operator norm:
		\begin{eqnarray*}
			\left\| U-Id \right\|_{H^p \to H^p}^p & \leq & \left\| U_{\psi-\psi^*, \phi} \right\|_{H^p \to H^p}^p + \left\| U_{\psi^*,\phi-\phi^*} \right\|_{H^p \to H^p}^p \nonumber \\
			& \leq & C_1 \zeta^{p(1/p-1/2)} \left\| U_{\psi-\psi^*,\phi} \right\|_{L^2 \to L^2}^p + C_2(b,\zeta) \CZ(\psi-\psi^*,\phi)^p \nonumber \\ 
			& & + C_1 \eta^{p(1/p-1/2)} \left\| U_{\psi^*, \phi- \phi^*} \right\|_{L^2 \to L^2}^p + C_2(b,\eta) \CZ(\psi^*,\phi-\phi^*)^p,
		\end{eqnarray*}
		where \(\CZ(\cdot,\cdot)\) denotes the constant defined as in (\ref{eq:CZ_2}), and the constants \(C_1\) and \(C_2(b,\cdot)\) are the same as in the proof of Theorem~\ref{Theorem:Bdd of U} (c.f. (\ref{eq:C_1}) and (\ref{eq:C_2})) for \(b>2/p\) and \(\zeta, \eta \geq \delta(b)\), with \(\delta(b)\) given as in (\ref{eq:b and delta}).

		Now, let \(b>p/2\) be arbitrary but fixed. Then, \(\delta(b)\) is fixed as well and, by taking the infimum over all \(\zeta, \eta \geq \delta(b)\) in the above bound, we get
		\[\left\|U-Id\right\|_{H^p\to H^p}^p \leq \Mp,\]
		with \(\Mp\) defined as the resulting infimum value.
		The given assumption \(\Mp<1\) implies that \(\left\|U-Id\right\|_{H^p\to H^p}<1\), i.e., \(\psi\) and \(\phi\) are approximate duals in \(H^p\). Thus, the wavelet frame operator \(U\) is bijective on \(H^p\) by the Neumann series expansion \cite{NeumannArticle,NeumannBook}, and this completes the proof of Theorem~\ref{Theorem:Invertibility of U}. 
	\end{proof}

		\section{Wavelet Series Expansion in $H^p(\mathbb{R})$ }
		\label{sec:proof of corollary}
		
		In this section, we will show Theorem~\ref{Corollary} by using the previous results such as the boundedness and the invertibility of the wavelet frame operator \(U\), and two additional results. One of these additional results is Theorem~\ref{thm:Extension of S,T to TL}, and it says that each of the synthesis operator and the analysis operator has a bounded extension to the space related with \(H^p\). This is a generalization of the result for the dyadic case with \(A=2\) by Frazier and Jawerth \cite{[FJ]DD}, and this generalization is proved in \cite{[BownikHo]AnisotropicTLspace} for an arbitrary dilation factor \(A\in\mathbb{R}\), \(A>1\). The other additional result is Theorem~\ref{thm:HardyAadicTLEquiv}, and it gives the equivalence of the (non-dyadic) Triebel-Lizorkin space \(\dot{F}^{02}_p\) and the Hardy space \(H^p\) using a generalized Littlewood-Paley theory. One can show this result simply by adjusting the well-known assumptions for the dyadic case to the non-dyadic settings. 
		
		Below is the definition of the (non-dyadic) Triebel-Lizorkin space \(\dot{F}^{02}_p\) and the (non-dyadic) Triebel-Lizorkin sequence space \(\dot{f}^{02}_p\). Here, we use the notation \(\mathcal{S}'/\mathcal{P}\) for the space of tempered distributions modulo polynomials.

		\begin{definition}
			Let \(0<p \leq 1\).
			\begin{enumerate}[(a)]
				\item The \textit{(non-dyadic) Triebel-Lizorkin space} \(\dot{F}^{02}_p:=\dot{F}^{02}_p(\mathbb{R},A)\) is the collection of all \(f\in \mathcal{S}'/\mathcal{P}\) such that 
				\begin{equation*}
					\left\| f \right\|_{\dot{F}^{02}_p}:= \left\| \left( \sum\limits_{j \in\mathbb{Z}} \lvert f * \rho_{A,j} \rvert^2 \right)^{1/2} \right\|_{L^p} < \infty,
				\end{equation*}
				where \(\rho_{A,j}(\cdot)=A^j\rho(A^j\cdot)\) and \(\rho\) is a Schwartz function with its Fourier transform \(\widehat{\rho}\) supported in the annulus \(1/\sqrt{A} \leq \lvert \xi \rvert \leq A\), and \(\lvert \widehat{\rho}\rvert \geq C>0\) on the annulus \(1 \leq \lvert \xi \rvert \leq \sqrt{A}\).\label{def:TLspace}
				
				\item The \textit{(non-dyadic) Triebel-Lizorkin sequence space} \(\dot{f}^{02}_p:= \dot{f}^{02}_p(A)\) is the collection of all complex-valued sequences \(c=\{c_{jk}\}_{j,k\in\mathbb{Z}}\) such that 
				\begin{equation*}
					\left\| c \right\|_{\dot{f}^{02}_p}:= \left\| \left( \sum\limits_{j,k\in\mathbb{Z}} \left(\lvert c_{jk} \rvert A^{j/2} \chi_{I_{jk}}\right)^2\right)^{1/2} \right\|_{L^p}<\infty,
				\end{equation*}
				with the characteristic function \(\chi_{I_{jk}}\) on the interval 
		\(I_{jk} =[A^{-j} k, A^{-j} (k+1))\).\label{def:TLSequenceSpace}
			\end{enumerate}
			\label{def:TLspace,sequenceSpace}
		\end{definition}
		
		In \cite[Definition 5.1, Theorem 5.5 and 5.6]{[BownikHo]AnisotropicTLspace}, Bownik and Ho define functions such as the smooth synthesis molecule, say \(\psi_{BH}\), and the smooth analysis molecule, say \(\phi_{BH}\). With these functions, they show that each of the synthesis operator \(s_{\psi_{BH}}\) and the analysis operator \(t_{\phi_{BH}}\) has a bounded extension. We also remark that these results are initially proved in \cite{[FJ]DD} under the dyadic setting (i.e., \(A=2\)). 
		
		Under the assumptions given to our synthesizer \(\psi\) and analyzer \(\phi\) in Theorem~\ref{Theorem:Bdd of U}, if we add one more assumption (namely, \(\xi\hat{\psi} \in W^{\N+2,1}\)), we can easily show that these are precisely the smooth synthesis and analysis molecules of Bownik and Ho. Therefore, we obtain the following theorem by using the results in \cite{[BownikHo]AnisotropicTLspace}. We omit the proof of this theorem as it is straightforward.
		
		\begin{theorem}
			Let \(0<p \leq 1\) be fixed, and recall \(\Np= \lfloor 1/p-1 \rfloor\). Assume that \(\psi,\phi \in L^2\) satisfy all the conditions in Theorem \ref{Theorem:Bdd of U}, and further assume that \(\xi\widehat{\psi} \in W^{\Np+2,1}\). Then
			\begin{enumerate}[(a)]
				\item the wavelet analysis operator \(t_\phi:\dot{F}^{02}_p \to \dot{f}^{02}_p\) is a bounded operator, and
				\item the wavelet synthesis operator \(s_\psi:\dot{f}^{02}_p \to \dot{F}^{02}_p\) is a bounded operator.
			\end{enumerate}
			\label{thm:Extension of S,T to TL}
		\end{theorem}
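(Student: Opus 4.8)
The plan is to reduce the statement to the molecular synthesis/analysis machinery of Bownik and Ho \cite{[BownikHo]AnisotropicTLspace} (which, in the dyadic case $A=2$, is that of Frazier and Jawerth \cite{[FJ]DD}), by verifying that under the stated hypotheses the single generator $\psi$ is a smooth synthesis molecule and $\phi$ is a smooth analysis molecule for $\dot{F}^{02}_p$. Once this identification is made, part (b) follows from the boundedness of the synthesis operator and part (a) from the boundedness of the analysis operator (Theorems~5.5 and~5.6 of \cite{[BownikHo]AnisotropicTLspace}), applied to the wavelet families $\{\psi_{jk}\}$ and $\{\phi_{jk}\}$, whose dilation--translation structure matches the index intervals $I_{jk}=[A^{-j}k,\,A^{-j}(k+1))$ appearing in Definition~\ref{def:TLspace,sequenceSpace}\ref{def:TLSequenceSpace}.

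First I would recall the defining molecular conditions of \cite{[BownikHo]AnisotropicTLspace} specialized to $\dot{F}^{02}_p$ in dimension one. For this space the relevant parameter is $J=1/\min(1,p,2)=1/p$, so that the required number of vanishing moments is exactly $\lfloor J-1\rfloor=\Np=\N$, and the decay exponents are governed by $1/p$. Concretely, a smooth synthesis molecule must exhibit decay $(1+|x|)^{-\Lambda}$ with $\Lambda>1/p$, a first-order smoothness condition expressed through the decay of its derivative, and vanishing moments up to order $\N$; a smooth analysis molecule requires analogous but stronger decay and smoothness together with the same moment cancellation.

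Next I would pass from the frequency-side hypotheses to spatial estimates using Lemma~\ref{lemma:W^{n,1}}, which converts $\widehat{g}\in W^{n,1}$ into the pointwise decay $|g(x)|\lesssim (1+|x|)^{-n}$. Since $\widehat{\psi}\in W^{\N+3,1}$ by Theorem~\ref{Theorem:Bdd of U}(a), we obtain $|\psi(x)|\lesssim (1+|x|)^{-(\N+3)}$; and since $\xi\widehat{\psi}$ is, up to a constant, the Fourier transform of $\psi'$, the \emph{additional} hypothesis $\xi\widehat{\psi}\in W^{\N+2,1}$ yields $|\psi'(x)|\lesssim (1+|x|)^{-(\N+2)}$, which is precisely the estimate needed to certify the smoothness condition for the synthesis molecule. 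This explains why the extra assumption is imposed on $\psi$. For $\phi$, the conditions $\xi^\alpha\widehat{\phi}\in W^{\N+3,1}$ in Theorem~\ref{Theorem:Bdd of U}(a), taken with $\alpha=0$ and $\alpha=1$, give $|\phi(x)|,|\phi'(x)|\lesssim (1+|x|)^{-(\N+3)}$, while the remaining conditions on $\xi^\alpha\widehat{\phi}$ supply the enhanced decay demanded of an analysis molecule. The vanishing moments $\int\psi(x)x^\beta\,dx=\int\phi(x)x^\beta\,dx=0$ for $0\le\beta\le\N$ are exactly Theorem~\ref{Theorem:Bdd of U}(c). Finally, since $\N=\lfloor 1/p-1\rfloor$ forces $\N+2>1/p$ and hence $\N+3>1/p+1$, all decay exponents comfortably exceed the Bownik--Ho thresholds, so $\psi$ and $\phi$ qualify as the required molecules and the two boundedness theorems apply.

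The main obstacle is the bookkeeping of the \emph{asymmetric} molecular conditions: one must correctly match the derivative-decay and moment thresholds to the synthesis versus analysis role, and in particular verify that the single extra hypothesis $\xi\widehat{\psi}\in W^{\N+2,1}$ exactly closes the gap in the synthesis-molecule smoothness requirement, while the pre-existing, stronger conditions on $\phi$ already cover the more demanding analysis-molecule requirement. Beyond this matching of exponents, the argument is routine, which is why the paper states the result without a detailed proof.
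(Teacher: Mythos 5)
Your proposal takes essentially the same route as the paper: the paper's (omitted) proof likewise consists of verifying that, under the hypotheses of Theorem~\ref{Theorem:Bdd of U} together with the extra assumption \(\xi\widehat{\psi} \in W^{\Np+2,1}\), the generators \(\psi\) and \(\phi\) are precisely the smooth synthesis and analysis molecules of Bownik and Ho, and then invoking their boundedness results (Theorems~5.5 and~5.6 of \cite{[BownikHo]AnisotropicTLspace}). Your matching of hypotheses to molecular conditions — in particular that the added assumption on \(\xi\widehat{\psi}\) supplies, via Lemma~\ref{lemma:W^{n,1}}, the derivative decay certifying the synthesis-molecule smoothness, while the stronger conditions on \(\phi\) cover the analysis-molecule requirements — is exactly the verification the paper declares straightforward and omits.
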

		
		In the dyadic setting, it is well-known that the Hardy space and the dyadic Triebel-Lizorkin space are equivalent \cite{[Grafakos]MFA,[Stein]HA}. In the following theorem, we state that the Hardy space \(H^p\) in Definition~\ref{def:AtomicHardy} and the non-dyadic Triebel-Lizorkin space \(\dot{F}^{0 2}_p\), \(0<p \leq 1\), are equivalent. It can be shown by using the Littlewood-Paley theory under the non-dyadic setting. We omit the proof of this theorem as well because it results from simple modifications of the known techniques. 
		
		\begin{theorem}
			Let \(\rho\) be a Schwartz function whose Fourier transform is nonnegative, supported in the annulus \(1/\sqrt{A} \leq \lvert \xi \rvert \leq A\), equals to \(1\) on the smaller annulus \(1 \leq \lvert \xi \rvert \leq \sqrt{A}\), and satisfies 
			\[\sum\limits_{j\in\mathbb{Z}} \widehat{\rho}(A^{-j} \xi ) =1, \quad \forall \xi \neq 0.\]
			Let \(0<p \leq 1\) be fixed. Then, the following statements hold true.
		\begin{enumerate}[(a)]
		\item 
For all \(f\in H^p\), we have \(\left\| f \right\|_{\dot{F}^{02}_p} \lesssim \left\| f \right\|_{H^p}\).
			\item 
			Conversely, if a tempered distribution \(f\) satisfies \(\left\| f \right\|_{\dot{F}^{02}_p}<\infty\),
			then there exists a unique polynomial \(g\) such that \(f-g\in H^p\) and satisfies \(\left\| f-g \right\|_{H^p} \lesssim \left\| f \right\|_{\dot{F}^{02}_p}.\)
	    \end{enumerate}
			\label{thm:HardyAadicTLEquiv}
		\end{theorem}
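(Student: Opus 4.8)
The statement is the Littlewood--Paley characterization of \(H^p\) rephrased for a general dilation \(A>1\), so the plan is to reproduce the classical dyadic arguments of \cite{[Grafakos]MFA,[Stein]HA}, together with the \(\varphi\)-transform machinery of \cite{[FJ]DD,[BownikHo]AnisotropicTLspace}, in the \(A\)-adic setting; since every estimate below depends only on the scaling structure, the dyadic proofs transcribe with \(2\) replaced by \(A\). Writing \(S_\rho f := (\sum_{j\in\mathbb{Z}} |f*\rho_{A,j}|^2)^{1/2}\), so that \(\|f\|_{\dot{F}^{02}_p} = \|S_\rho f\|_{L^p}\), I would first record two structural facts about \(\rho\): because \(\widehat{\rho}\) is supported away from the origin, \(\rho\) is a Schwartz function all of whose moments vanish; and because \(\widehat{\rho}\ge 0\), is supported in the annulus, and satisfies \(\sum_j \widehat{\rho}(A^{-j}\xi)=1\), one has \(\sum_j |\widehat{\rho}(A^{-j}\xi)|^2 \le C\) for all \(\xi\), with a matching lower bound on \(\mathbb{R}\setminus\{0\}\), so that by Plancherel \(\|S_\rho f\|_{L^2} \approx \|f\|_{L^2}\).

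For part (a) I would estimate \(S_\rho\) on a single \((p,2)\)-atom \(\atom\) supported in an interval \(I\) centered at \(y_0\) and then sum. On the dilate \(\zeta I\), the \(L^2\) bound above together with H\"older's inequality and the size condition \(\|\atom\|_{L^2}\le |I|^{1/2-1/p}\) gives \(\int_{\zeta I}(S_\rho \atom)^p \lesssim 1\). Away from \(\zeta I\), I would exploit the vanishing moments of \(\atom\) up to order \(\N\) against a Taylor expansion of \(\rho_{A,j}(x-\cdot)\) about \(y_0\) to obtain pointwise decay of \(|\atom * \rho_{A,j}(x)|\), sum the resulting geometric series in \(j\), and integrate to get \(\int_{(\zeta I)^c}(S_\rho \atom)^p \lesssim 1\); hence \(\|S_\rho \atom\|_{L^p}^p \lesssim C\) uniformly in \(\atom\). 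Finally, since \(f\mapsto \{f*\rho_{A,j}\}_j\) is linear into \(\ell^2\), the triangle inequality in \(\ell^2\) gives \(S_\rho(\sum_k \lambda_k \atom_k) \le \sum_k |\lambda_k|\, S_\rho \atom_k\) pointwise, and the elementary inequality \((\sum b_k)^p \le \sum b_k^p\) for \(0<p\le 1\) yields \(\|S_\rho f\|_{L^p}^p \le \sum_k |\lambda_k|^p \|S_\rho \atom_k\|_{L^p}^p \lesssim \sum_k|\lambda_k|^p\); taking the infimum over atomic decompositions gives \(\|f\|_{\dot{F}^{02}_p}\lesssim \|f\|_{H^p}\).

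For part (b) I would invert the Littlewood--Paley decomposition. The resolution \(\sum_j \widehat{\rho}(A^{-j}\xi)=1\) means \(f=\sum_j \rho_{A,j}*f\) in \(\mathcal{S}'/\mathcal{P}\), and discretizing this Calder\'{o}n reproducing formula via the \(A\)-adic \(\varphi\)-transform of \cite{[FJ]DD,[BownikHo]AnisotropicTLspace} produces coefficients \(c=\{c_{jk}\}\) with \(\|c\|_{\dot{f}^{02}_p}\approx \|f\|_{\dot{F}^{02}_p}\) and a synthesis representation \(f=\sum_{j,k} c_{jk}\theta_{jk}\) in terms of smooth synthesis molecules \(\theta_{jk}\). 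Each \(\theta_{jk}\) carries the decay and vanishing moments of a molecule, so synthesis of a \(\dot{f}^{02}_p\) sequence lands in \(H^p\) with \(\|\cdot\|_{H^p}\lesssim \|c\|_{\dot{f}^{02}_p}\) (the molecular half of the \(\varphi\)-transform theory, analogous to Theorem~\ref{thm:Extension of S,T to TL} and to Proposition~\ref{prop: Molecule is in Hp}); call the resulting element \(\widetilde{f}\in H^p\). Since the reconstruction is canonical only modulo polynomials, \(g:=f-\widetilde{f}\) is a polynomial and \(\|f-g\|_{H^p}=\|\widetilde{f}\|_{H^p}\lesssim \|f\|_{\dot{F}^{02}_p}\). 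Uniqueness of \(g\) is immediate: if \(f-g_1\) and \(f-g_2\) both lie in \(H^p\), then \(g_1-g_2\in H^p\cap\mathcal{P}=\{0\}\), as no nonzero polynomial lies in \(H^p\) for \(0<p\le 1\).

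The forward direction aside, the real work is in part (b): one must justify convergence of the reproducing formula and of the synthesis series in \(\mathcal{S}'/\mathcal{P}\), verify that the discretized coefficients both control and are controlled by \(\|f\|_{\dot{F}^{02}_p}\), and---most delicately---pin down the representative \(\widetilde{f}\) so that the ambient polynomial ambiguity collapses to the single polynomial \(g\). These are precisely the points where the \(A\)-adic bookkeeping must be checked against the dyadic templates of \cite{[FJ]DD,[BownikHo]AnisotropicTLspace}, although none of them requires an idea beyond those references, which is why the proof may reasonably be omitted.
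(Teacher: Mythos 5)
Your proposal is correct and matches the paper's approach: the paper omits this proof entirely, asserting only that the equivalence follows from the classical dyadic Littlewood--Paley theory \cite{[Grafakos]MFA,[Stein]HA} and the $\varphi$-transform machinery \cite{[FJ]DD,[BownikHo]AnisotropicTLspace} via ``simple modifications'' to the $A$-adic setting, which is precisely the adaptation you outline. Your part (a) is the standard atomic square-function estimate, and your part (b) defers, exactly as the paper does, to the known $\varphi$-transform/molecular synthesis results of \cite{[FJ]DD,[BownikHo]AnisotropicTLspace} together with the fact that $H^p\cap\mathcal{P}=\{0\}$.
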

		
		By using Theorem~\ref{thm:Extension of S,T to TL}  and  \ref{thm:HardyAadicTLEquiv}, let us show Theorem~\ref{Corollary}. 
		
		\begin{proof}[Proof of Theorem~\ref{Corollary}]
			Let \(\psi, \phi\in L^2\) be functions satifying all the conditions in the theorem. Then the wavelet frame operator \(U=U_{\psi,\phi}\) is a bounded operator from the Hardy space \(H^p\) to itself by Theorem~\ref{Theorem:Bdd of U}, and the operator is invertible by Theorem \ref{Theorem:Invertibility of U}. Let \(f\in H^p\) be fixed. Then, \(U^{-1}f\) is in \(H^p\) and thus in \(\dot{F}^{02}_p\) by Theorem~\ref{thm:HardyAadicTLEquiv} (a). By Theorem \ref{thm:Extension of S,T to TL} (a), we have 
			\(t_{\phi} (U^{-1} f) = \left\{ \left< U^{-1} f, \phi_{jk} \right>\right\}_{j,k\in\mathbb{Z}} \in \dot{f}^{02}_p\). 
			Subsequently, by applying the synthesis operator \(s_\psi\) to the sequence \(t_{\phi}(U^{-1}f)\) and by using Theorem~\ref{thm:Extension of S,T to TL}~(b), we have
			\(s_{\psi}(t_{\phi}(U^{-1} f))\in \dot{F}^{02}_p\).
			Then, by Theorem \ref{thm:HardyAadicTLEquiv} (b), there is a unique polynomial \(g\) such that 
			\[s_\psi(t_{\phi}(U^{-1} f)) - g \in H^p.\]
			However, by definition of wavelet frame operator \(U\) and from the fact that  \(U^{-1} f \in H^p\), we see that \(U(U^{-1}f) = s_{\psi}(t_{\phi}(U^{-1} f))\). Since \(U\) is bounded from \(H^p\) to \(H^p\), we have \(U(U^{-1}f) \in H^p\), which in turn implies \(s_{\psi}(t_{\phi}(U^{-1} f) )\in H^p\). Therefore, we conclude that  the unique polynomial \(g\) must be \(0\) and that
			\[f = \sum\limits_{j,k\in\mathbb{Z}} \left<U^{-1} f,\phi_{jk} \right> \psi_{jk}.\]
			In particular, for any \(f\in H^p\), there exists \(\{c_{jk}\} = \left\{ \left< U^{-1}f, \phi_{jk} \right> \right\}_{j,k\in\mathbb{Z}} \in \dot{f}^{02}_p\) such that
			\(f = \sum\limits_{j,k\in\mathbb{Z}} c_{jk} \psi_{jk}\).
		\end{proof}
	
	\section{Example for $H^p$ with $1/2<p \leq 1$}
	\label{sec:Example}
	
	In this section we show that the Mexican hat function used for the space $H^1$ in \cite{[BuiLaug]WFBonLH} can also be used for the space $H^p$  with \(1/2< p \leq 1\). It will serve as an example illustrating our main results. 
	
	Let \(\psi(x) = (1-x^2) e^{-x^2/2}\) be the Mexican hat function, which will be our synthesizer. Let \(A=2\) and \(1/2< p \leq 1\) be fixed. Then \(\Np=\lfloor 1/p - 1 \rfloor =0\).
	
	We follow the setting of \cite[Chapter~6]{[BuiLaug]WFBonLH} in choosing the corresponding analyzer \(\phi\), and the exact duals \(\psi^*\) and \(\phi^*\) required in Theorem~\ref{Theorem:Invertibility of U}. In particular, the two functions \(\phi\) and \(\phi^*\) are chosen to be the same.
	
	Then, since \(\psi,\psi^*,\phi,\phi^* \) satisfy all the assumptions in Theorem~\ref{Theorem:Invertibility of U}, and since \(\phi^* = \phi\), the wavelet frame operator \(U_{\psi,\phi}\) satisfies 
	\begin{equation}
		\left\| U_{\psi,\phi}-Id \right\|_{H^p\to H^p}^p \leq \left\| U_{\psi-\psi^*,\phi} \right\|_{H^p \to H^p}^p \leq C_1 \zeta^{p(1/p-1/2)} \mathcal{U}^p + C_2 \CZ^p,
		\label{eqn : example_bound of Hp operator norm}
	\end{equation}
	with \(\zeta \geq \delta\) where \(\delta = 3/(4b) + \sqrt{1+ 9/({16b^2})}\) for \(b>2/p\), as seen from its proof, \(\mathcal{U}:=\lVert U_{\psi-\psi^*, \phi} \rVert_{L^2 \to L^2}\) and \(\CZ:= \CZ(\psi-\psi^*,\phi)\). Here, \(C_1=\left( 1+ \Calphamax_0 \right)^p\),
	\[ 
	C_2 = C_2(b,\zeta) = \left(\frac{8}{\sqrt{3}}\frac{(\zeta^2+3)^{1/2}}{(\zeta^2-1)^{3/2}}\right)^{p} \left(2 \cdot \left( 2b-1\right)^{-p/2} C_1 + 3 \cdot \left(\frac{3\Calphamax_0}{b-1}\right)^p \right) \zeta,
	\]
	where \(\Calphamax_0\) is a constant satisfying \eqref{eq:Calpha} with \(\alpha=0\).
	
	Since, for each \(k \in \mathbb{N}_0\), the constant function \(g_0^k(x)=1\) and the sets $E_k$ defined in \eqref{eq:E_k} satisfy 
		\[\frac{1}{\lvert E_k \rvert} \displaystyle \int_{E_k} g_0^k(x) dx = 1,\]
	we can take \(\Calphamax_0=1\). For this choice of \(\Calphamax_0\), \(C_1=2^p\). 

	Now using \(C_1=2^p\) and \(\Calphamax_0=1\), we see that the right-hand side of (\ref{eqn : example_bound of Hp operator norm}) is bounded by
	\begin{equation} 
		2^{1+1/p} \zeta^{1/p}\left(\frac{\mathcal{U}}{\zeta^{1/2}} + \frac{4}{\sqrt{3}}\frac{(\zeta^2+3)^{1/2}}{(\zeta^2-1)^{3/2}} \left( \frac{2\cdot 4^{1/p}}{(2b-1)^{1/2}}  + \frac{3\cdot 6^{1/p}}{b-1} \right) \CZ \right).
		\label{eq:Np bound 1}
	\end{equation}
	The estimation \(\mathcal{U}< 0.00026\) is given in \cite[p.405]{[BuiLaug]WFBonLH}. Since \(\Np =0\) and \(A=2\), from \eqref{eq:CZ_2} and \eqref{eq:CZ_2,alpha}, we see that 
	\[ \CZ = \CZ(\psi-\psi^*,\phi)= \max \left\{4 (\sigma_0\tau_0)^{1/2}, \frac{10}{3} (\sigma_1\tau_1^2)^{1/3}\right\},\]
	where \(\sigma_i:=\sigma_i(\psi-\psi^*,\phi)\) and \(\tau_i:=\tau_i(\psi-\psi^*,\phi)\) for \(i=0,1\). The estimation for \(\sigma_i\) and \(\tau_i\), \(i=0,1\), can be found in \cite[p.403-404]{[BuiLaug]WFBonLH}, and we have
	\[\sigma_0<0.000045, \quad \sigma_1<0.00022, \quad \tau_0<0.00086, \quad \tau_1<0.036,\]
	which results in \(\CZ<0.022\).
	
	Hence, for any \(\zeta\ge \delta = 3/(4b) + \sqrt{1+ 9/({16b^2})}\) with \(b>2/p\), the number in \eqref{eq:Np bound 1} is bounded by
	\[	2^{1+1/p} \zeta^{1/p}\left(\frac{0.00026}{\zeta^{1/2}} + \frac{4\cdot 0.022}{\sqrt{3}}\frac{(\zeta^2+3)^{1/2}}{(\zeta^2-1)^{3/2}} \left( \frac{2\cdot 4^{1/p}}{(2b-1)^{1/2}}  + \frac{3\cdot 6^{1/p}}{b-1} \right) \right).\]
	Let \(\zeta=5\) and \(b=250\). Then the required conditions \(\zeta \geq \delta\) and \(b>2/p\) are satisfied for all \(1/2<p \leq 1\). Once we substitute these values to the above estimation, the only variable in the resulting expression is \(p\) and this expression can be bounded by the value with \(p=1/2\). Therefore, we see that for all \(1/2<p \leq 1\), \(\left\| U_{\psi,\phi}-Id \right\|_{H^p\to H^p}\) is bounded by 
		\[ 2^3 5^2\left(\frac{0.00026}{5^{1/2}} + \frac{4\cdot 0.022}{\sqrt{3}}\frac{(5^2+3)^{1/2}}{(5^2-1)^{3/2}} \left( \frac{2\cdot 4^{2}}{(2\cdot 250-1)^{1/2}}  + \frac{3\cdot 6^{2}}{250-1} \right) \right)< 1.\]
		
	We conclude that \(U_{\psi,\phi}\) is bijective on \(H^p\), and since the Mexican hat function satisfies the additional regularity condition \(\xi \widehat{\psi} \in W^{2,1}\) in Theorem~\ref{Corollary} as well, every element in Hardy space \(H^p\) has a wavelet series expansion with a wavelet system \(\{\psi_{jk}\}\) using the Mexican hat function \(\psi\) for all \(1/2<p \leq 1\).

	\begin{appendices}
		\label{Appendix}
		\section{Lemmas for Section~\ref{sec:WFO is CZO}}
		\setcounter{lemma}{0}
		\renewcommand{\thelemma}{\Alph{section}.\arabic{lemma}}
		
		\label{appx : lemmas needed in Sec2}
		
		In this section, we place some lemmas needed in Section~\ref{sec:WFO is CZO}. Every result is straightforward from each mentioned reference, so we omit the proof. 
		
		\medskip
		
		The following lemma is an immediate extension of Lemma 4.3 in \cite{[BuiLaug]WFBonLH} for the case \(l\ge 2\).
		
		\begin{lemma}
			Let \(l\in \mathbb{N}\). If \(\widehat{\psi}\in W^{l,1}\), then \(\psi\) is continuous and \(\lvert \psi(x) \rvert\lesssim {1/(1+ \lvert x \rvert^l)}\).
			\label{lemma:W^{n,1}}
		\end{lemma}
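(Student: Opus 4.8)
The plan is to realize \(\psi\) as the inverse Fourier transform of \(\widehat{\psi}\) and then extract decay by transferring derivatives from the exponential onto \(\widehat{\psi}\) via integration by parts. Since \(\widehat{\psi}\in W^{l,1}\subset L^1\), the inversion formula
\[
\psi(x) = \int_{\mathbb{R}} \widehat{\psi}(\xi)\, e^{2\pi i x \xi}\, d\xi
\]
holds with an absolutely convergent integral. Continuity of \(\psi\) follows immediately: the integrand is continuous in \(x\) and dominated by the integrable function \(\lvert\widehat{\psi}\rvert\), so dominated convergence applies. The same bound also yields the uniform estimate \(\lvert \psi(x)\rvert \leq \lVert \widehat{\psi}\rVert_{L^1}\).

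For the decay, I would integrate by parts \(l\) times in the inversion integral, using \(\frac{d}{d\xi} e^{2\pi i x \xi} = (2\pi i x)\, e^{2\pi i x \xi}\). Each step moves one derivative onto \(\widehat{\psi}\) at the cost of a factor \(1/(2\pi i x)\), producing
\[
\psi(x) = \frac{(-1)^l}{(2\pi i x)^l} \int_{\mathbb{R}} \widehat{\psi}^{(l)}(\xi)\, e^{2\pi i x \xi}\, d\xi, \qquad x \neq 0,
\]
and hence \(\lvert \psi(x)\rvert \leq (2\pi \lvert x\rvert)^{-l}\,\lVert \widehat{\psi}^{(l)}\rVert_{L^1}\). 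Combining the two estimates gives \(\lvert\psi(x)\rvert \leq \min\{\lVert\widehat{\psi}\rVert_{L^1},\, C\lvert x\rvert^{-l}\}\), from which \(\lvert \psi(x)\rvert \lesssim 1/(1+\lvert x\rvert^l)\) follows by treating the regions \(\lvert x\rvert\leq 1\) and \(\lvert x\rvert\geq 1\) separately (on the former the uniform bound dominates, on the latter the power bound).

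The one point requiring care—and the step I would watch most closely—is the vanishing of the boundary terms at each integration by parts, i.e.\ that \(\widehat{\psi}^{(k)}(\xi)\to 0\) as \(\lvert\xi\rvert\to\infty\) for every \(0\leq k\leq l-1\). This is precisely where the full hypothesis \(\widehat{\psi}\in W^{l,1}\) is needed rather than just integrability of \(\widehat{\psi}\) and of its top derivative: each \(\widehat{\psi}^{(k)}\) with \(0\le k\le l-1\) lies in \(W^{1,1}\), hence agrees a.e.\ with an absolutely continuous function equal to the integral of its derivative; since both \(\widehat{\psi}^{(k)}\) and \(\widehat{\psi}^{(k+1)}\) are integrable, the limits at \(\pm\infty\) exist and must be \(0\). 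Once this is recorded, the integrations by parts are fully justified and the proof is complete.
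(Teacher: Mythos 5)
Your proof is correct, and it is essentially the argument the paper relies on: the paper omits the proof entirely, citing Lemma 4.3 of Bui--Laugesen and calling the case \(l\ge 2\) an ``immediate extension,'' and that underlying proof is exactly your Fourier-inversion-plus-iterated-integration-by-parts argument. You also correctly isolate and justify the one delicate point, namely that \(\widehat{\psi}^{(k)}\in W^{1,1}\) forces the boundary terms to vanish at infinity for each \(0\le k\le l-1\).
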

	
		The following lemma shows sufficient conditions to make the \(\alpha\)-th partial derivative \(\partial_y^\alpha K_0(x,y)\) of \(K_0(x,y)\) absolutely convergent and continuous for any fixed \(\alpha\in \mathbb{N}_{0}\). The restricted cases, \(\alpha=0\) and \(1\), of this lemma are proved in \cite{[BuiLaug]WFBonLH}. 
		
		\begin{lemma}
			Let \(\alpha \in \mathbb{N}_0\). Assume that \(\psi, \phi\) satisfy the following conditions:
			\begin{enumerate}[(a)]
				\item \(\phi\) and all derivatives of \(\phi\) up to the \(\alpha\)-th order (that is, \(\phi^{(0)},\cdots, \phi^{(\alpha)}\)) exist and are bounded,
				\item \( \lvert \psi(x) \rvert \lesssim 1/(1+\lvert x \rvert ^2)\).
			\end{enumerate} 
			Then \(\partial_y^\alpha K_0(x,y) = \sum_{k \in\mathbb{Z}} \psi(x-k) \overline{\phi^{(\alpha)} (y-k)}\) converges absolutely for each \(x,y\in\mathbb{R}\).
			Moreover, if we further assume the continuity of \(\psi\) and \(\phi^{(\alpha)} \), then \(\partial_y^\alpha K_0(x,y)\) is continuous.
			\label{lemma:AbsolutelyConvergenceContinuityK0}
		\end{lemma}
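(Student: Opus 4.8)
The plan is to establish the absolute convergence first, then to justify the term-by-term differentiation by an induction on the order, and finally to deduce continuity from a Weierstrass $M$-test. The single estimate that drives everything comes from hypothesis (b): $\lvert \psi(x-k)\rvert \lesssim 1/(1+\lvert x-k\rvert^2)$, and for $x$ in any compact set $[-R,R]$ one has $\sum_{k\in\mathbb{Z}} 1/(1+\lvert x-k\rvert^2)\le C_R<\infty$ uniformly in $x$, since for $\lvert k\rvert>R$ the terms are dominated by the summable sequence $1/(1+(\lvert k\rvert-R)^2)$. Combined with the uniform bound $\lvert \phi^{(\alpha)}\rvert\le M$ from hypothesis (a), this immediately shows that $\sum_{k\in\mathbb{Z}}\psi(x-k)\overline{\phi^{(\alpha)}(y-k)}$ converges absolutely for every $(x,y)$, with a majorant $M\,C/(1+\lvert x-k\rvert^2)$ that is independent of $y$.

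Next I would prove, by induction on $m=0,1,\dots,\alpha$, that $\partial_y^m K_0(x,y)=\sum_{k\in\mathbb{Z}}\psi(x-k)\overline{\phi^{(m)}(y-k)}$. The base case $m=0$ is just the definition of $K_0$. For the inductive step from $m$ to $m+1$ (with $m+1\le\alpha$) I would fix $x$ and invoke the standard theorem on differentiating a series term by term in the variable $y$: it suffices that the series $\sum_k \psi(x-k)\overline{\phi^{(m)}(y-k)}$ converges at a point (true by the induction hypothesis) and that the differentiated series $\sum_k \psi(x-k)\overline{\phi^{(m+1)}(y-k)}$ converges uniformly on compact $y$-intervals. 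The latter is exactly the $M$-test with majorant $\lVert\phi^{(m+1)}\rVert_\infty\,C/(1+\lvert x-k\rvert^2)$, which is finite and summable precisely because hypothesis (a) guarantees that every intermediate derivative $\phi^{(m+1)}$, $m+1\le\alpha$, is bounded. Taking $m=\alpha$ then yields the claimed formula for $\partial_y^\alpha K_0$.

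Finally, under the additional continuity of $\psi$ and $\phi^{(\alpha)}$, each summand $\psi(x-k)\overline{\phi^{(\alpha)}(y-k)}$ is continuous in $(x,y)$. On a compact set $[-R,R]^2$ the majorant $M\,C/(1+(\lvert k\rvert-R)^2)$ for $\lvert k\rvert>R$ is summable and independent of $(x,y)$, and the finitely many remaining terms are continuous, so the $M$-test yields uniform convergence there; since a uniform limit of continuous functions is continuous, $\partial_y^\alpha K_0$ is continuous.

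The main obstacle is the inductive differentiation step: one must verify that term-by-term differentiation is legitimate at each pass, and this is exactly where the full strength of hypothesis (a)—the boundedness of \emph{every} derivative $\phi^{(0)},\dots,\phi^{(\alpha)}$, not merely the top one—is used, since going from order $m$ to $m+1$ requires the uniform convergence of the series built from $\phi^{(m+1)}$.
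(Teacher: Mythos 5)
Your proof is correct. The paper does not actually supply a proof of this lemma---Appendix~A states that the cases \(\alpha=0,1\) are proved in the Bui--Laugesen reference and that the extension to general \(\alpha\) is straightforward, so the proof is omitted---and your argument (the \(M\)-test with majorant \(\lVert\phi^{(\alpha)}\rVert_\infty/(1+\lvert x-k\rvert^2)\) for absolute convergence, induction on the order with term-by-term differentiation justified by uniform-in-\(y\) convergence of the differentiated series, and uniform convergence on compact sets for joint continuity) is exactly the standard argument that fills in those omitted details, including the one genuinely nontrivial point: verifying that the term-wise differentiated series really is \(\partial_y^\alpha K_0\) rather than merely a convergent series.
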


		The next lemma is a generalization of Lemma \(A.1\) in \cite{[BuiLaug]WFBonLH}. There, it is proved for \(l=0,1\) and we extend the result to the case \(l \ge 2\). We write our lemma for all \(l \in\mathbb{N}_0\).
		\begin{lemma}
			Let \(l\in \mathbb{N}_0\), \(A>1\) 
			and \(\sigma, \tau \geq 0\). If \(g(z)\leq \min \{\sigma, \tau/ \lvert z\rvert ^{l+2}\}\) for all \(z\neq 0\), then
				\[
				\sum\limits_{j\in\mathbb{Z}} \vert A^{j(l+1)} g(A^jz) \rvert \leq \kappa_l(A) \frac{\sigma^{1/(l+2)} \tau^{(l+1)/(l+2)}}{\lvert z \rvert^{l+1}}
				\]
				where \(\kappa_l(A) = A(2A^l + \sum_{k=0}^{l-1} A^k)/ (A^{l+1}-1)\), with \(\sum_{k=0}^{-1} A^k\) interpreted as 0.
			\label{lemma:appendix1}
		\end{lemma}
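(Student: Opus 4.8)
The plan is to prove the estimate by splitting the sum over $j\in\mathbb{Z}$ at the scale where the two competing bounds for $g$ balance. Since $A^{j(l+1)}>0$, the left-hand side equals $\sum_j A^{j(l+1)}|g(A^jz)|$, and the hypothesis (which in the intended application is applied to the nonnegative quantity $|\partial_y^\alpha K_0|$, so that $|g|=g$ obeys the same bound) gives each term two estimates: the \emph{$\sigma$-bound} $A^{j(l+1)}|g(A^jz)|\le\sigma A^{j(l+1)}$, favorable when $j$ is small, and the \emph{$\tau$-bound} $A^{j(l+1)}|g(A^jz)|\le A^{j(l+1)}\tau/(A^{j(l+2)}|z|^{l+2})=\tau A^{-j}/|z|^{l+2}$, favorable when $j$ is large. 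If $\sigma=0$ or $\tau=0$ the hypothesis forces $g\equiv 0$ along the dilation orbit and both sides vanish, so I may assume $\sigma,\tau>0$.

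First I would fix the crossover integer $m\in\mathbb{Z}$ as the largest integer with $A^m\le t$, where $t:=(\tau/\sigma)^{1/(l+2)}/|z|$; thus $A^m\le t<A^{m+1}$. Applying the $\sigma$-bound for $j\le m$ and the $\tau$-bound for $j\ge m+1$, I evaluate the two resulting geometric series (the first of ratio $A^{l+1}>1$ summed downward, the second of ratio $A^{-1}<1$ summed upward, so both converge):
\[\sum_{j\le m}\sigma A^{j(l+1)}=\sigma A^{m(l+1)}\frac{A^{l+1}}{A^{l+1}-1},\qquad \sum_{j\ge m+1}\frac{\tau A^{-j}}{|z|^{l+2}}=\frac{\tau}{|z|^{l+2}}\frac{A^{-m}}{A-1}.\]

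Next I would feed the two defining inequalities of $m$ into these expressions. From $A^m\le t$ I get $\sigma A^{m(l+1)}=\sigma(A^m)^{l+1}\le\sigma(\tau/\sigma)^{(l+1)/(l+2)}/|z|^{l+1}=\sigma^{1/(l+2)}\tau^{(l+1)/(l+2)}/|z|^{l+1}$, so the first series is at most $\frac{A^{l+1}}{A^{l+1}-1}\cdot\frac{\sigma^{1/(l+2)}\tau^{(l+1)/(l+2)}}{|z|^{l+1}}$. From $t<A^{m+1}$, equivalently $A^{-m}<A|z|(\sigma/\tau)^{1/(l+2)}$, I get $\tau A^{-m}/|z|^{l+2}<A\,\sigma^{1/(l+2)}\tau^{(l+1)/(l+2)}/|z|^{l+1}$, so the second series is at most $\frac{A}{A-1}\cdot\frac{\sigma^{1/(l+2)}\tau^{(l+1)/(l+2)}}{|z|^{l+1}}$. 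Both partial sums therefore carry exactly the target factor $\sigma^{1/(l+2)}\tau^{(l+1)/(l+2)}/|z|^{l+1}$.

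The final step, which I expect to be the only delicate point, is the bookkeeping confirming that the two constants add up to precisely $\kappa_l(A)$. Using the factorization $A^{l+1}-1=(A-1)\sum_{k=0}^{l}A^k$ to rewrite $\frac{A}{A-1}=\frac{\sum_{k=1}^{l+1}A^k}{A^{l+1}-1}$, I would combine $\frac{A^{l+1}}{A^{l+1}-1}+\frac{A}{A-1}=\frac{A^{l+1}+\sum_{k=1}^{l+1}A^k}{A^{l+1}-1}=\frac{2A^{l+1}+\sum_{k=1}^{l}A^k}{A^{l+1}-1}$, which is exactly $A(2A^l+\sum_{k=0}^{l-1}A^k)/(A^{l+1}-1)=\kappa_l(A)$, with the convention $\sum_{k=0}^{-1}A^k=0$ handling $l=0$ cleanly. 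This matches the claimed constant and completes the proof; since the argument is uniform in $l\in\mathbb{N}_0$, it recovers in particular the known cases $l=0,1$ from the reference.
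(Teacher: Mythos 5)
Your proof is correct and is essentially the argument the paper has in mind: the paper omits the proof entirely, citing it as a straightforward extension of Lemma A.1 of Bui--Laugesen (proved there for \(l=0,1\)), and that proof is exactly your crossover splitting of the dilation sum into two geometric series, whose constants \(\frac{A^{l+1}}{A^{l+1}-1}+\frac{A}{A-1}\) combine to precisely \(\kappa_l(A)\) (your algebra checks out, and it reproduces the paper's values \(\kappa_0(2)=4\), \(\kappa_1(2)=10/3\) used in Section 6). Your handling of the degenerate cases \(\sigma=0\) or \(\tau=0\), and of the implicit nonnegativity of \(g\) (needed because the hypothesis bounds \(g\) while the conclusion bounds \(\lvert g\rvert\)), is also sound.
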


		The next lemma shows sufficient conditions to make the \(\alpha\)-th partial derivative \(\partial_y^\alpha K(x,y)\) of \(K(x,y)\) absolutely convergent and continuous, where we set \(K(x,y) := K_{\psi,\phi}(x,y)\). For the restricted cases of \(\alpha=0\) and \(1\), this result is proved in \cite{[BuiLaug]WFBonLH}. Here, we state the lemma for all cases \(\alpha \in\mathbb{N}_0\).
		
		\begin{lemma}
			Let \(\alpha\in\mathbb{N}_0\) and \(A>1\) be a dilation factor. 
			Assume that \(\psi,\phi\) satisfy the following conditions:
			\begin{enumerate}[(a)]
				\item \(\phi\) and all derivatives of \(\phi\) up to the \(\alpha\)-th order (that is, \(\phi^{(0)},\cdots, \phi^{(\alpha)}\)) exist and are bounded,
				\item \(\lvert \psi(x) \rvert, \lvert \phi^{(\alpha)}(x) \rvert \lesssim 1/(1+\lvert x \rvert^{\alpha+2}).\)
			\end{enumerate}
			Then
			\[\partial_y^\alpha K(x,y) = \sum\limits_{j \in\mathbb{Z}} \sum\limits_{k\in\mathbb{Z}} A^{j(\alpha+1)} \psi(A^jx - k) \overline{\phi^{(\alpha)}(A^j y - k) }\]
			converges absolutely and uniformly on \(\{(x,y): x \neq y\}\) with  \(\lvert \partial_y^\alpha K(x,y) \rvert \lesssim 1/\lvert x-y \rvert^{\alpha+1}\). If we further assume the continuity of \(\psi\) and \(\phi^{(\alpha)}\), then \(\partial_y^\alpha K\) is continuous on \(\{(x,y): x\neq y\}\). 
			\label{lemma:AbsoluteConvergenceEstimationK}
		\end{lemma}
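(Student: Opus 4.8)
The plan is to reduce the double series to a single sum over scales by first handling the inner lattice sum and then summing over $j$. Recall that $K_{\psi,\phi}(x,y)=\sum_{j\in\mathbb{Z}}A^{j}K_0(A^{j}x,A^{j}y)$ with $K_0(u,v)=\sum_{k}\psi(u-k)\overline{\phi(v-k)}$. Since hypothesis~(b) gives $\lvert\psi(x)\rvert\lesssim 1/(1+\lvert x\rvert^{\alpha+2})\lesssim 1/(1+\lvert x\rvert^{2})$, the hypotheses of Lemma~\ref{lemma:AbsolutelyConvergenceContinuityK0} are met, so for each fixed $j$ the map $y\mapsto A^{j}K_0(A^{j}x,A^{j}y)$ is $C^{\alpha}$ and its $\alpha$-th $y$-derivative equals $A^{j(\alpha+1)}(\partial_v^{\alpha}K_0)(A^{j}x,A^{j}y)$, where $(\partial_v^{\alpha}K_0)(u,v)=\sum_{k}\psi(u-k)\overline{\phi^{(\alpha)}(v-k)}$ converges absolutely (and is continuous once $\psi,\phi^{(\alpha)}$ are). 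This rewrites the candidate series as $\sum_{j}A^{j(\alpha+1)}(\partial_v^{\alpha}K_0)(A^{j}x,A^{j}y)$.

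Next I would establish two pointwise estimates for $\partial_v^{\alpha}K_0$ directly from the decay hypotheses, mimicking the role played by $\sigma_\alpha$ and $\tau_\alpha$ in Proposition~\ref{prop:estimation of alpha partial of K_0} but proved elementarily. The uniform bound $\lvert(\partial_v^{\alpha}K_0)(u,v)\rvert\le C_0$ follows from boundedness of $\phi^{(\alpha)}$ together with the uniform summability of $\sum_{k}(1+\lvert u-k\rvert^{\alpha+2})^{-1}$. For the decay bound $\lvert(\partial_v^{\alpha}K_0)(u,v)\rvert\le C_1/\lvert u-v\rvert^{\alpha+2}$ valid for $\lvert u-v\rvert\ge 1$, I would use that \emph{both} $\lvert\psi\rvert$ and $\lvert\phi^{(\alpha)}\rvert$ decay at order $\alpha+2$ and split the lattice sum according to whether $\lvert u-k\rvert\le\lvert u-v\rvert/2$ or not; since $\lvert u-v\rvert\le\lvert u-k\rvert+\lvert v-k\rvert$, in each case one factor is bounded by $C/\lvert u-v\rvert^{\alpha+2}$ while the complementary sum stays bounded. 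Combining the two gives, for all $u\neq v$ and a bound depending only on $\lvert u-v\rvert$,
\[
\lvert(\partial_v^{\alpha}K_0)(u,v)\rvert\le\min\Bigl\{\sigma,\ \tau/\lvert u-v\rvert^{\alpha+2}\Bigr\}.
\]

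With $z=x-y$ and $A^{j}x-A^{j}y=A^{j}z$, this min-bound feeds directly into Lemma~\ref{lemma:appendix1} with $l=\alpha$: summing $\sum_{j}A^{j(\alpha+1)}$ against the bound evaluated at $A^{j}z$ yields simultaneously the absolute convergence of the scale series and the estimate $\lvert\partial_y^{\alpha}K_{\psi,\phi}(x,y)\rvert\le\kappa_\alpha(A)\,\sigma^{1/(\alpha+2)}\tau^{(\alpha+1)/(\alpha+2)}/\lvert x-y\rvert^{\alpha+1}\lesssim 1/\lvert x-y\rvert^{\alpha+1}$. Applying the identical estimate to $\sum_{k}\lvert\psi(u-k)\rvert\,\lvert\phi^{(\alpha)}(v-k)\rvert$, which obeys the same min-bound, gives absolute convergence of the full double series. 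Because these majorants decay geometrically as $j\to\pm\infty$ uniformly on any region $\{\lvert x-y\rvert\ge\delta\}$, the Weierstrass $M$-test yields uniform convergence off the diagonal, whence continuity of $\partial_y^{\alpha}K_{\psi,\phi}$ follows from continuity of the individual terms.

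The step I expect to be the main obstacle is justifying that this series genuinely represents $\partial_y^{\alpha}K_{\psi,\phi}$, i.e.\ interchanging the $\alpha$-fold $y$-derivative with the sum over $j$. Differentiating the inner $k$-sum is harmless by Lemma~\ref{lemma:AbsolutelyConvergenceContinuityK0}, but the scale series is delicate: an intermediate-order series $\sum_{j}A^{j(m+1)}(\partial_v^{m}K_0)(A^{j}x,A^{j}y)$ with $m<\alpha$ need not converge if $\phi^{(m)}$ is only bounded, so the standard termwise-differentiation theorem cannot be invoked naively. The remedy is that in the setting where this lemma is used each lower-order derivative $\phi^{(m)}$, $0\le m\le\alpha$, also satisfies $\lvert\phi^{(m)}(x)\rvert\lesssim 1/(1+\lvert x\rvert^{\alpha+2})$ (for instance via Lemma~\ref{lemma:W^{n,1}} from the Sobolev hypotheses on $\widehat{\phi}$), so the estimates of the previous two paragraphs apply verbatim at every order $m\le\alpha$ and show that each derivative series converges locally uniformly away from the diagonal. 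The classical theorem on term-by-term differentiation then applies inductively in $m$ and produces $\partial_y^{\alpha}K_{\psi,\phi}(x,y)=\sum_{j,k}A^{j(\alpha+1)}\psi(A^{j}x-k)\overline{\phi^{(\alpha)}(A^{j}y-k)}$, completing the argument.
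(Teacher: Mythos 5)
Your argument is correct in substance, but note that the paper itself contains no proof of this lemma to compare against: it is stated in Appendix~\ref{appx : lemmas needed in Sec2} with the remark that the cases \(\alpha=0,1\) are proved in the cited work of Bui and Laugesen and that the general case is straightforward. Your route is the natural one and uses exactly the paper's own toolkit: Lemma~\ref{lemma:AbsolutelyConvergenceContinuityK0} for the inner \(k\)-sum and Lemma~\ref{lemma:appendix1} with \(l=\alpha\) for the sum over scales. Two points are worth recording. First, your real-space derivation of the bound \(\lvert \partial_v^\alpha K_0(u,v)\rvert \le \min\{\sigma,\ \tau/\lvert u-v\rvert^{\alpha+2}\}\) (splitting the lattice sum at \(\lvert u-k\rvert\le\lvert u-v\rvert/2\)) is the right move here: Proposition~\ref{prop:estimation of alpha partial of K_0} produces a bound of the same shape, but from Fourier-side hypotheses (\(W^{2,1}\), \(W^{\alpha+2,2}\)) that this lemma does not assume, so it cannot be quoted directly, and your elementary argument correctly fills that role. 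Second, the obstacle you flag --- that the intermediate scale series \(\sum_j A^{j(m+1)}(\partial_v^m K_0)(A^jx,A^jy)\), \(m<\alpha\), need not converge when \(\phi^{(m)}\) is merely bounded, so termwise differentiation in \(j\) cannot be invoked naively --- is genuine, and it is the only delicate point in the lemma. Your fix (importing decay of all \(\phi^{(m)}\) from the Sobolev hypotheses in force where the lemma is applied) works but makes the proof context-dependent. A self-contained fix is available: since \(\lvert\phi^{(\alpha)}(x)\rvert\lesssim 1/(1+\lvert x\rvert^{\alpha+2})\) is integrable at infinity, each \(\phi^{(m)}\) with \(m<\alpha\) has limits at \(\pm\infty\); boundedness of \(\phi^{(m-1)}\) (hypothesis (a)) forces those limits to vanish when \(m\ge 1\), and \(\phi\in L^2\) (in force throughout the paper) forces the limit to vanish when \(m=0\). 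Integrating down from order \(\alpha\) then yields \(\lvert\phi^{(m)}(x)\rvert\lesssim 1/(1+\lvert x\rvert^{m+2})\) for every \(0\le m\le\alpha\), which is exactly the decay needed to run your estimates and Lemma~\ref{lemma:appendix1} with \(l=m\) at each intermediate order, so the inductive termwise differentiation goes through under the lemma's own hypotheses.
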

	
		The next lemma is a generalization (c.f. \cite{Hardy-RellichIneq,Hardy-Ineq}) of the well-known Hardy's inequality in \cite{[Hardy]Hardy'sInequality}.
		
		\begin{lemma}[\cite{Hardy-RellichIneq,Hardy-Ineq}]
			If \(f\) is differentiable, \(f(0)=0\), and \(\int \lvert f'(x) \rvert^2/\lvert x \rvert^{k} dx< \infty\) for some \(k\ge 0\), then \(\int \lvert f(x) \rvert^2/\lvert x\rvert^{k+2} dx \leq (4/(k+1)^2) \int \lvert f'(x)\rvert^2/{\lvert x \rvert^k} dx\).
			\label{lemma:Hardy-R Inequality}
		\end{lemma}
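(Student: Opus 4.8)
The plan is to reduce the statement to the positive half-line by symmetry and then prove it there via a single integration by parts followed by Cauchy--Schwarz. Writing \(g(x):=f(-x)\) for \(x>0\), one checks that \(g(0)=0\), \(g'(x)=-f'(-x)\), and the substitution \(x\mapsto -x\) turns every integral over \((-\infty,0)\) into the corresponding integral over \((0,\infty)\). Hence it suffices to prove
\[
\int_0^\infty \frac{|f(x)|^2}{x^{k+2}}\,dx \le \frac{4}{(k+1)^2}\int_0^\infty \frac{|f'(x)|^2}{x^k}\,dx,
\]
and adding this to its mirror image yields the full inequality on \(\mathbb{R}\).

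First I would integrate by parts on a truncated interval \([\epsilon,R]\), using the identity \(x^{-(k+2)}=-\tfrac{1}{k+1}\tfrac{d}{dx}\bigl(x^{-(k+1)}\bigr)\). This gives
\[
\int_\epsilon^R \frac{|f|^2}{x^{k+2}}\,dx
= \Bigl[-\tfrac{1}{k+1}\tfrac{|f|^2}{x^{k+1}}\Bigr]_\epsilon^R
+\frac{2}{k+1}\int_\epsilon^R \frac{\operatorname{Re}(f\,\overline{f'})}{x^{k+1}}\,dx.
\]
The boundary contribution at \(R\) is nonpositive and may simply be discarded when bounding from above, while the term at \(\epsilon\) is controlled in the next paragraph. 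Writing the last integrand as \(\bigl(f/x^{(k+2)/2}\bigr)\bigl(\overline{f'}/x^{k/2}\bigr)\) and applying Cauchy--Schwarz produces, with \(I_{\epsilon,R}:=\int_\epsilon^R |f|^2/x^{k+2}\) and \(J:=\int_0^\infty |f'|^2/x^k\),
\[
I_{\epsilon,R}\;\le\; B_\epsilon+\frac{2}{k+1}\,I_{\epsilon,R}^{1/2}J^{1/2},
\qquad B_\epsilon:=\tfrac{1}{k+1}\,\tfrac{|f(\epsilon)|^2}{\epsilon^{k+1}}.
\]
Viewing this as a quadratic inequality in \(I_{\epsilon,R}^{1/2}\) and solving gives a bound that, once \(B_\epsilon\to 0\), collapses to \(I_{\epsilon,R}\le \tfrac{4}{(k+1)^2}J\); letting \(R\to\infty\) finishes the half-line estimate.

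The main obstacle will be justifying the vanishing of the boundary term at the origin, which is exactly where the hypotheses \(f(0)=0\) and \(\int |f'|^2/|x|^k<\infty\) enter. Since \(f(0)=0\), I would write \(f(\epsilon)=\int_0^\epsilon f'(t)\,dt\) and estimate by Cauchy--Schwarz,
\[
|f(\epsilon)|^2 \le \Bigl(\int_0^\epsilon \frac{|f'(t)|^2}{t^k}\,dt\Bigr)\Bigl(\int_0^\epsilon t^k\,dt\Bigr)
=\frac{\epsilon^{k+1}}{k+1}\int_0^\epsilon \frac{|f'(t)|^2}{t^k}\,dt,
\]
so that \(B_\epsilon\le \tfrac{1}{(k+1)^2}\int_0^\epsilon |f'|^2/t^k\,dt\to 0\) as \(\epsilon\to 0\) by integrability of the tail. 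This estimate is what makes the argument uniform in \(k\ge 0\): for larger \(k\) the weight is more singular at the origin, but the same integrability hypothesis forces a correspondingly faster vanishing of \(f\) there, so no case distinction on \(k\) is needed. The truncation-plus-quadratic-inequality device also sidesteps any a priori finiteness assumption on the left-hand integral, which cannot be taken for granted at the outset.
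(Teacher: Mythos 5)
Your proof is correct. Note, however, that the paper does not actually prove this lemma: it appears in Appendix~A with the remark that ``every result is straightforward from each mentioned reference, so we omit the proof,'' and the statement is simply cited from the literature on Hardy--Rellich inequalities. What you have written is a complete, self-contained version of the classical argument that those references use: reduce to the half-line by the reflection \(g(x)=f(-x)\), integrate by parts against \(x^{-(k+2)}=-\tfrac{1}{k+1}\tfrac{d}{dx}(x^{-(k+1)})\) on \([\epsilon,R]\), discard the nonpositive boundary term at \(R\), apply Cauchy--Schwarz with the splitting \(x^{-(k+1)}=x^{-(k+2)/2}\cdot x^{-k/2}\), and close the resulting quadratic inequality in \(I_{\epsilon,R}^{1/2}\). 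Your handling of the boundary term at the origin is the right one, and it is also where the only genuinely delicate point hides: writing \(f(\epsilon)=\int_0^\epsilon f'(t)\,dt\) uses the fundamental theorem of calculus for a function that is merely assumed differentiable everywhere. This is legitimate here because your own Cauchy--Schwarz estimate shows \(f'\) is Lebesgue integrable on \([0,\epsilon]\) (namely \(\int_0^\epsilon |f'|\le (\int_0^\epsilon |f'|^2/t^k\,dt)^{1/2}(\epsilon^{k+1}/(k+1))^{1/2}<\infty\)), and an everywhere-differentiable function with integrable derivative does satisfy the Newton--Leibniz formula; it would be worth saying this explicitly, since for pathological \(f\) without the weighted integrability hypothesis the identity can fail. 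With that one sentence added, the argument is airtight and uniform in \(k\ge 0\), exactly as you claim.
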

		
		We can simply apply the above result repeatedly for higher-order derivatives to obtain the following lemma. 
		
		\begin{lemma}
			For any fixed \(l\in \mathbb{N}\), assume that
			\begin{enumerate}[(a)]
				\item \(f^{(l)}\) exists and \(\displaystyle f^{(l)}\) belongs to \(L^2\), and
				\item \(f(0)= f'(0)=\cdots=f^{(l-1)}(0) = 0\).
			\end{enumerate}
			Then we have \(\left\| f/x^l\right\|_{L^2} \lesssim \left\| f^{(l)} \right\|_{L^2}.\)
			\label{lemma:Generalized Hardy's Inequality}
		\end{lemma}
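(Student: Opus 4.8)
The plan is to prove this purely by iterating the single-derivative Hardy inequality of Lemma~\ref{lemma:Hardy-R Inequality} exactly $l$ times, peeling off one derivative and lowering the weight exponent by $2$ at each step. The key observation is that applying Lemma~\ref{lemma:Hardy-R Inequality} to the function $f^{(m)}$ with weight exponent $k=2(l-m-1)$ converts a bound on $\|f^{(m)}/x^{l-m}\|_{L^2}$ into a bound on $\|f^{(m+1)}/x^{l-m-1}\|_{L^2}$, since $f^{(m)}(0)=0$ by hypothesis (b). Chaining these for $m=0,1,\ldots,l-1$ telescopes $\|f/x^l\|_{L^2}$ all the way down to $\|f^{(l)}\|_{L^2}$.

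Concretely, first I would record the single step: for each $0\le m\le l-1$, since $f^{(m)}$ is differentiable with $f^{(m)}(0)=0$, Lemma~\ref{lemma:Hardy-R Inequality} with $k=2(l-m-1)\ge 0$ yields
\[
\int \frac{\lvert f^{(m)}(x)\rvert^2}{\lvert x\rvert^{2(l-m)}}\, dx \;\le\; \frac{4}{\left(2(l-m)-1\right)^2}\int \frac{\lvert f^{(m+1)}(x)\rvert^2}{\lvert x\rvert^{2(l-m-1)}}\, dx,
\]
provided the right-hand integral is finite. Second, I would verify these finiteness conditions by a backward induction: the innermost application ($m=l-1$, $k=0$) only requires $\int \lvert f^{(l)}\rvert^2\,dx=\|f^{(l)}\|_{L^2}^2<\infty$, which is exactly hypothesis (a); its conclusion then supplies the finiteness needed to run the case $m=l-2$, and so on up to $m=0$. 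Third, I would multiply the $l$ inequalities together (equivalently, substitute each into the previous) to obtain
\[
\left\| \frac{f}{x^l}\right\|_{L^2}^2 \;\le\; \left(\prod_{j=0}^{l-1}\frac{4}{(2j+1)^2}\right)\left\| f^{(l)}\right\|_{L^2}^2 \;=\; \frac{4^l}{\big((2l-1)!!\big)^2}\,\left\| f^{(l)}\right\|_{L^2}^2,
\]
which is the claimed estimate, and in fact with an explicit absolute constant independent of $f$.

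I do not expect any serious obstacle here, consistent with the paper's own remark that the result follows by simply applying the preceding lemma repeatedly. The only point requiring genuine care is the bookkeeping of the finiteness hypotheses: Lemma~\ref{lemma:Hardy-R Inequality} presupposes that the weighted integral of $\lvert f'\rvert^2$ is already finite, so one cannot apply it blindly from the outermost step inward. Organizing the verification as the backward induction described above---starting from the $L^2$-finiteness of $f^{(l)}$ granted by hypothesis (a) and propagating finiteness upward one derivative at a time---resolves this cleanly and simultaneously guarantees that every integral appearing in the telescoping chain is finite, so that the final product of constants is legitimate.
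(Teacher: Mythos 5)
Your proposal is correct and follows exactly the route the paper intends: the paper omits the proof, remarking only that the result follows by applying Lemma~\ref{lemma:Hardy-R Inequality} repeatedly, and your iteration with $k=2(l-m-1)$ at step $m$, together with the backward-induction check of the finiteness hypotheses, is precisely that argument carried out carefully (and it even yields the explicit constant $4^l/((2l-1)!!)^2$).
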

		
		\section{Example for Lemma~\ref{lemma : polynomial G}}
		\label{appx : example of Lemma9}		
		
		We depict a simple and explicit example of Lemma~\ref{lemma : polynomial G} in case of \(N=1\) and fixed \(R>0\). Specifically, for each fixed \(0\leq \alpha \leq 1\), we can easily find the unique polynomial~\(g_\alpha^k(x)\) for every \(k \in \mathbb{N}_0\), and a constant \(\Calpha\). Afterwards, we can also find the constant \(\Calphamax\) defined as in (\ref{eqn : g}) by taking maximum value among~\(\{\Calpha\}_{0 \leq \alpha \leq 1}\). 
		\begin{enumerate}[(a)]
			\item Let \(\alpha = 0\) and let \(g_0^k(x) = r_1 x + r_2\) satisfy
			\begin{equation*}
				\int_{E_k} (r_1 x + r_2) dx = \lvert E_k \rvert, \quad \int_{E_k} (r_1 x^2 + r_2 x) dx = 0.
			\end{equation*}
			Since \(E_k\) is symmetric, integral of odd-order term on \(E_k\) vanishes in the above equations. Then we have \(r_2=1\) from the first equation and \(r_1=0\) from the second one regardless of \(k\). That is, \(g_0^k\) is uniquely determined by~\(1\) for every \(k \in \mathbb{N}_0\). Afterwards, we can define \(G_0^k (x) = \chi_{E_k}(x)\) for every \(k\in \mathbb{N}_0\), and find \(\Calphamax_0 = 1\) such that \(\lvert G_0^k (x) \rvert \leq \Calphamax_0 \) holds for every \(x \in \mathbb{R}\).
			
			\item Similarly for \(\alpha =1\), by setting \(g_1^k(x) = r_1 x +r_2\) with
			\begin{equation*}
				\int_{E_k} (r_1 x + r_2) dx = 0, \quad \int_{E_k} (r_1 x^2 + r_2 x) dx = \lvert E_k \rvert,
			\end{equation*}
			we can get \(g_1^0(x) = (3/R^2) x\), and \(g_1^k(x) = (12/(7(2^kR)^2)) x\) for \(k\in \mathbb{N}\). Then we can define \(G_1^k(x) = g_1^k(x) \chi_{E_k}\) with each polynomial \(g_1^k(x)\) for~\(k\in \mathbb{N}_0\), and find \(\Calphamax_1 = 3\) such that \(\lvert G_1^k(x) \rvert \leq \Calphamax_1/(2^k R)\) holds. Especially note that \(\Calphamax_1\) is a constant independent of \(k\).
		\end{enumerate}
		From the above, we can determine \(\Calphamax = \max \{\Calphamax_0, \Calphamax_1\} = 3\).
		
		\section{Lemmas for Proposition~\ref{prop: Molecule is in Hp}}
		\setcounter{lemma}{0}
		\renewcommand{\thelemma}{\Alph{section}.\arabic{lemma}}
		\label{appx: Necessities for Thm6}
		
		In this section, we use the same notation and work under the same settings as in the proof of Proposition~\ref{prop: Molecule is in Hp}. The following lemma is used to prove Lemma~\ref{claim:M_k-P_k atom}. 
		
		\begin{lemma} 
			For any fixed \(k\in \mathbb{N}_0\), let \(M_k\) be the function defined as in (\ref{eqn:M_k}) and \(P_k\) be the polynomial defined in (\ref{eqn:P_k}). Then we have
			\[\left\|M_k-P_k\right\|_{L^2} \leq \left(1+\Calphamax \left( \N +1 \right)\right) \left\|M_k\right\|_{L^2}\]
			where \(\Calphamax\) and \(\N\) are constants in Proposition~\ref{prop: Molecule is in Hp}.	
			\label{lemma:M_k-P_k<M_k}
		\end{lemma}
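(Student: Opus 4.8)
The plan is to apply the triangle inequality to split off the contribution of $P_k$, and then estimate $P_k$ term by term, exploiting the fact that the two scale factors attached to each summand cancel exactly. Since $\|M_k-P_k\|_{L^2} \le \|M_k\|_{L^2} + \|P_k\|_{L^2}$, it suffices to establish the bound $\|P_k\|_{L^2} \le \Calphamax(\N+1)\|M_k\|_{L^2}$, and then add $\|M_k\|_{L^2}$ to recover the stated constant $1+\Calphamax(\N+1)$.

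First I would bound the coefficients $m_\alpha^k = (1/|E_k|)\int M_k(x)\,x^\alpha\,dx$. Applying the Cauchy--Schwarz inequality and using that $|x|\le 2^kR$ on $E_k$ (from the definition (\ref{eq:E_k})), I get $\bigl|\int M_k(x)\,x^\alpha\,dx\bigr| \le \|M_k\|_{L^2}\bigl(\int_{E_k} x^{2\alpha}\,dx\bigr)^{1/2} \le \|M_k\|_{L^2}\,(2^kR)^{\alpha}\,|E_k|^{1/2}$, and hence $|m_\alpha^k| \le \|M_k\|_{L^2}\,(2^kR)^{\alpha}\,|E_k|^{-1/2}$. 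Next I would bound each $\|G_\alpha^k\|_{L^2}$: since $G_\alpha^k$ is supported in $E_k$ and obeys the pointwise estimate $|G_\alpha^k(x)| \le \Calpha (2^kR)^{-\alpha}$ from (\ref{eq:Calpha}) of Lemma~\ref{lemma : polynomial G}, I obtain $\|G_\alpha^k\|_{L^2} \le \Calpha\,(2^kR)^{-\alpha}\,|E_k|^{1/2}$.

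Multiplying the two estimates, the powers of $2^kR$ cancel and the factors $|E_k|^{\pm 1/2}$ cancel, leaving $|m_\alpha^k|\,\|G_\alpha^k\|_{L^2} \le \Calpha\,\|M_k\|_{L^2} \le \Calphamax\,\|M_k\|_{L^2}$. Summing over the $\N+1$ indices $0\le\alpha\le\N$ and using the triangle inequality on the definition (\ref{eqn:P_k}) of $P_k$ then yields $\|P_k\|_{L^2} \le \Calphamax(\N+1)\,\|M_k\|_{L^2}$, which combined with the opening triangle inequality completes the argument.

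There is no substantial obstacle; the estimate is elementary once Lemma~\ref{lemma : polynomial G} is available. The one point that deserves care is the cancellation of the scale factors, which is exactly what makes $\Calpha$ --- a constant independent of both $k$ and $R$ --- the right quantity to carry through. This is precisely why Lemma~\ref{lemma : polynomial G} is stated with the normalization $(2^kR)^{-\alpha}$, so that the dyadic scale $2^kR$ of $E_k$ never enters the final bound.
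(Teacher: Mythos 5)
Your proof is correct and follows essentially the same route as the paper's: both rest on the pointwise bound $\lvert G_\alpha^k(x)\rvert \le \Calpha (2^kR)^{-\alpha}$ from Lemma~\ref{lemma : polynomial G}, a Cauchy--Schwarz estimate of the moment integral defining $m_\alpha^k$, the exact cancellation of the scale factors $(2^kR)^{\pm\alpha}$ and $\lvert E_k\rvert^{\pm 1/2}$, and a final triangle inequality. The only (immaterial) difference is bookkeeping: the paper first establishes a uniform pointwise bound on $P_k$ and then integrates over its support, whereas you bound each term $\lvert m_\alpha^k\rvert\,\lVert G_\alpha^k\rVert_{L^2}$ and sum in norm.
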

		
		\begin{proof}
			Recall that for each \(0 \leq \alpha \leq \N\), there is a constant \(\Calpha\) such that 
			\(\lvert G_\alpha^k(x)\rvert \leq \Calpha (2^{k-1}\zeta \lvert I \rvert)^{-\alpha}\) for every \(x\in\mathbb{R}\) from Lemma~\ref{lemma : polynomial G} with \(N= \N\) and \(R=\zeta \lvert I \rvert/2\). By the definition of \(m_\alpha^k\) and the estimation of \(\lvert G_\alpha^k(x)\rvert\), we have
			\[
			\lvert P_k(x) \rvert \leq \sum\limits_{0 \leq \alpha \leq \N} \lvert m_\alpha^k\rvert  \lvert G_\alpha^k (x) \rvert \leq \sum\limits_{0 \leq \alpha \leq \N} \frac{1}{\lvert E_k \rvert} \biggl\lvert \displaystyle\int M_k(x) \,x^\alpha dx \biggr\rvert \left( \Calpha (2^{k-1} \zeta \lvert I \rvert)^{-\alpha} \right).
			\]
			Since the support of \(M_k\) is \(E_k\), by the definition of \(E_k\), we have \(\lvert x \rvert^\alpha \leq (2^{k-1} \zeta \lvert I \rvert)^\alpha\) for any \(x\in E_k\). Therefore, the integral in the right-hand side of the above inequality satisfies
			\(
			\lvert \int M_k(x) x^\alpha dx \rvert \leq (2^{k-1} \zeta \lvert I \rvert)^\alpha \int_{E_k} \lvert M_k(x) \rvert dx.
			\)
			Then, with \(\Calphamax =\max_{0 \leq \alpha \leq \N} \Calpha\), the absolute value of \(P_k\) satisfies 
			\begin{equation*}
				\lvert P_k(x) \rvert \leq \Calphamax \sum\limits_{0 \leq \alpha \leq \N} \frac{1}{\lvert E_k\rvert} \displaystyle\int_{E_k} \lvert M_k(x) \rvert dx \leq  \frac{\Calphamax \left( \N +1 \right)}{\lvert E_k \rvert^{1/2}} \left\|M_k\right\|_{L^2}
			\end{equation*}
			where the last inequality holds due to the H\"{o}lder's inequality. Since the support of \(P_k\) is \(E_k\), we have \(
			\left\|P_k\right\|_{L^2} \leq \Calphamax \left(\N +1 \right) \left\|M_k\right\|_{L^2}.
			\) Thus, the stated bound is obtained as
			\[\left\|M_k-P_k \right\|_{L^2} \leq \left\|M_k\right\|_{L^2} + \left\|P_k \right\|_{L^2} \leq \left(1+\Calphamax \left(\N + 1 \right)\right) \left\|M_k \right\|_{L^2},\]
			and this completes the proof.
		\end{proof}

		\begin{lemma}
			For any fixed \(k \in \mathbb{N}_0\), let \(M_k\) be a function defined as in (\ref{eqn:M_k}) and \(P_k\) be a polynomial defined in (\ref{eqn:P_k}). If \(\lambda_k> 0\) satisfies (\ref{eqn:claimresult1}), then \((M_k-P_k)/\lambda_k\) is a \(\left(p,2\right)\)-atom. 
			
			\label{claim:M_k-P_k atom}
		\end{lemma}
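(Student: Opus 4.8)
The plan is to verify the three defining properties of a $(p,2)$-atom (Definition~\ref{def:atom}) for the normalized function $(M_k-P_k)/\lambda_k$, taking $\lambda_k$ equal to the right-hand side of (\ref{eqn:claimresult1}). First I would dispose of the support and moment conditions, which follow directly from the construction. Since $M_k=M\chi_{E_k}$ and $P_k=\sum_{0\le\alpha\le\N} m_\alpha^k G_\alpha^k$ are both supported in $E_k$, and $E_k$ lies in a bounded interval (for $k\ge 1$ the smallest such interval is centered at $y_0=0$ with half-length $2^kR$, where $R=\zeta\lvert I\rvert/2$; for $k=0$ it is $E_0$ itself), the support condition holds. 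For the vanishing moments I would use the defining property (\ref{phiConditionEk}) of the polynomials $g_\alpha^k$ from Lemma~\ref{lemma : polynomial G}: for each $0\le\beta\le\N$,
\[
\int P_k(x)x^\beta\,dx = \sum_{0\le\alpha\le\N} m_\alpha^k\int_{E_k}g_\alpha^k(x)x^\beta\,dx = \sum_{0\le\alpha\le\N} m_\alpha^k\lvert E_k\rvert\,\delta_{\alpha\beta} = m_\beta^k\lvert E_k\rvert = \int M_k(x)x^\beta\,dx,
\]
so that $\int(M_k-P_k)x^\beta\,dx=0$ for all $0\le\beta\le\N$.

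The remaining, and main, task is the size estimate $\lVert(M_k-P_k)/\lambda_k\rVert_{L^2}\le\lvert I_k\rvert^{1/2-1/p}$, equivalently $\lVert M_k-P_k\rVert_{L^2}\le\lambda_k\lvert I_k\rvert^{1/2-1/p}$. Here I would first invoke Lemma~\ref{lemma:M_k-P_k<M_k} to reduce to $\lVert M_k\rVert_{L^2}$, giving $\lVert M_k-P_k\rVert_{L^2}\le(1+\Calphamax(\N+1))\lVert M_k\rVert_{L^2}$. For $k=0$ I would simply bound $\lVert M_0\rVert_{L^2}\le\lVert M\rVert_{L^2}$ and use $\lvert E_0\rvert=\zeta\lvert I\rvert$, which reproduces the first line of (\ref{eqn:claimresult1}) directly. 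For $k\ge 1$ I would exploit that $E_k$ is disjoint from $\zeta I=[-R,R]$, so the molecular decay (\ref{MBddCondition}) applies on $E_k$; integrating it gives
\[
\lVert M_k\rVert_{L^2}^2 \le C_M^2\lvert I\rvert^{2b-2/p}\int_{E_k}\lvert x\rvert^{-2b}\,dx = C_M^2\lvert I\rvert^{2b-2/p}\cdot\frac{2}{2b-1}\bigl((2^{k-1}R)^{1-2b}-(2^kR)^{1-2b}\bigr),
\]
and the factor $(2^{k-1}R)^{1-2b}-(2^kR)^{1-2b}=(2^{k-1}R)^{1-2b}(1-2^{1-2b})$ produces, together with the prefactor, the constant $(2^{2b}-2)/(2b-1)$ up to explicit powers of $2$. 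Substituting $R=\zeta\lvert I\rvert/2$ and collecting the powers of $2^{k}$, $\zeta$, and $\lvert I\rvert$ then recovers the second line of (\ref{eqn:claimresult1}); in particular the $\lvert I\rvert$-powers cancel precisely because $b-1/p+(1-2b)/2=1/2-1/p$.

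The step I expect to be most delicate is this constant bookkeeping for $k\ge 1$: matching the powers of $2$ requires care about which reference measure is attached to the annular piece (the measure $\lvert E_k\rvert=2^kR$ of the annulus versus the length of the containing interval, which differ by a factor of $2$ and hence by a factor $2^{1/p-1/2}$ in $\lambda_k$). I would fix the convention so that $\lvert I_k\rvert$ equals the measure used implicitly in (\ref{eqn:claimresult1}), and then use the \emph{exact} integral of $\lvert x\rvert^{-2b}$ over $E_k$, rather than a cruder pointwise bound, to obtain the sharp constant $\bigl((2^{2b}-2)/(2b-1)\bigr)^{1/2}$. Once these constants are in place, setting $\lambda_k$ equal to the right-hand side of (\ref{eqn:claimresult1}) guarantees $\lambda_k\ge\lVert M_k-P_k\rVert_{L^2}\lvert I_k\rvert^{1/p-1/2}$, so the size condition, and hence the $(p,2)$-atom property, follows.
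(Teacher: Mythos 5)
Your proposal is correct and follows essentially the same route as the paper's proof: support and vanishing moments come from the construction together with (\ref{phiConditionEk}), and the size condition is verified by combining Lemma~\ref{lemma:M_k-P_k<M_k} with the exact integral of \(\lvert x\rvert^{-2b}\) over \(E_k\), which reproduces the constants in (\ref{eqn:claimresult1}) exactly as you compute. The factor-of-two bookkeeping you flag is settled in the paper by precisely the convention you propose: the reference measure in the size condition is taken to be \(\lvert E_k\rvert = 2^kR = 2^{k-1}\zeta\lvert I\rvert\) (the paper normalizes by \(\lvert E_k\rvert^{1/2-1/p}\), treating the annulus \(E_k\), \(k\ge 1\), with its own measure rather than the length \(2^{k+1}R\) of the smallest containing interval), which is what makes the stated bound for \(\lambda_k\), \(k \ge 1\), come out without the extra factor \(2^{1/p-1/2}\).
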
	
		
		\begin{proof}
			Since the support of \(M_k\) and \(P_k\) is in \(E_k\), the support of \(M_k-P_k\) is contained in \(E_k\). Fix \(0 \leq \beta \leq \N\). By the definition of \(P_k\), we have
			\begin{equation*}
				\int \left(M_k - P_k\right) x^\beta dx = \int_{E_k} M(x)\,x^\beta dx - \sum\limits_{0 \leq \alpha \leq \N} m_\alpha^k \int_{E_k} G_\alpha^k (x) \,x^\beta dx.
			\end{equation*}
			The definition of \(m_\beta^k\), together with (\ref{phiConditionEk}), gives
			\[\sum\limits_{0 \leq \alpha \leq \N} m_\alpha^k \displaystyle\int_{E_k} G_\alpha^k (x) \,x^\beta dx = m_\beta^k \lvert E_k \rvert = \int M_k(x) x^\beta dx.
			\]
			Thus, we see that 
			\(\int \left(M_k - P_k\right) x^\beta dx = 0\)
			for every \(0 \leq \beta \leq \N \). This proves the vanishing moment condition for \(M_k - P_k\).
			Finally, we aim to find the range of \(\lambda_k\) satisfying
			\[\left\|(M_k-P_k)/\lambda_k\right\|_{L^2} \leq \lvert E_k\rvert^{1/2 - 1/p}. \]
			Let \(\lambda_k=\left\| M_k - P_k \right\|_{L^2} \lvert E_k \rvert^{-\left( 1/2-1/p\right)}\).
			For \(k=0\), by using Lemma~\ref{lemma:M_k-P_k<M_k} and the fact that \(\left\|M_0\right\|_{L^2} \leq \left\| M \right\|_{L^2}\), we have
			\begin{equation*}
				\lambda_0 \leq \left(1+\Calphamax \left( \N + 1 \right)\right) \left\|M \right\|_{L^2} \lvert E_0 \rvert^{-\left(1/2-1/p\right)} = \left(1+\Calphamax \left( \N + 1 \right)\right)(\zeta \lvert I \rvert)^{1/p-1/2} \left\|M \right\|_{L^2}
			\end{equation*}
			where the last equality holds since \(\lvert E_0 \rvert=\zeta \lvert I \rvert\).
			
			Since \(M\) satisfies the condition (\ref{MBddCondition}) (recall that \(y_0=0\)), for each \(k \in\mathbb{N}\), \(\left\| M_k \right\|_{L^2}\) is bounded by 
			\begin{equation*}
				C_M \lvert I \rvert^{b- 1/p} \left( \int_{E_k} \frac{1}{\lvert x \rvert^{2b}} dx \right)^{1/2} = C_M (2^{k-1} \zeta)^{1/2-b} \lvert I \rvert^{1/2-1/p}  \left(  \frac{2^{2b}-2}{2b-1}\right)^{1/2}.
			\end{equation*}
			With Lemma~\ref{lemma:M_k-P_k<M_k} and the above estimation of \(\left\|M_k \right\|_{L^2}\), for each \(k\in\mathbb{N}\), we have 
			\begin{equation*}
				\left\| M_k-P_k \right\|_{L^2} \leq C_M (2^{k-1} \zeta)^{1/2-b} \lvert I \rvert^{1/2-1/p} \left(\frac{2^{2b}-2}{2b-1}\right)^{1/2} \left(1+\Calphamax \left( \N +1 \right)\right).
			\end{equation*}
			Thus, since \(\lvert E_k \rvert \leq 2^{k-1} \zeta \lvert I \rvert\), we conclude that 
			\begin{equation*}
				\lambda_k \leq C_M (2^{k-1} \zeta)^{1/p-b} \left(\frac{2^{2b}-2}{2b-1}\right)^{1/2} \left(1+\Calphamax \left( \N +1\right)\right)
			\end{equation*}
		\end{proof}
		
		\begin{lemma}
			For any fixed \(0 \leq \alpha \leq \N\) and \(k\in\mathbb{N}_0\), let \(h_{\alpha}^k\) be a function defined as in~(\ref{eq: N and h}). If \(\mu_{\alpha}^k\) satisfies (\ref{eqn:claimresult2}), then \(h_\alpha^k/\mu_\alpha^k\) is a \(\left(p,2\right)\)-atom.
			\label{claim:P_k atom}
		\end{lemma}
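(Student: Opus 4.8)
The plan is to verify directly the three defining properties of a $(p,2)$-atom from Definition~\ref{def:atom} for the normalized function $h_\alpha^k/\mu_\alpha^k$, following closely the template of Lemma~\ref{claim:M_k-P_k atom}. The support and the vanishing moments are the routine parts. Since $G_\alpha^{k+1}$ is supported in $E_{k+1}$ and $G_\alpha^k$ in $E_k$, the function $h_\alpha^k$ in (\ref{eq: N and h}) is supported in $E_k\cup E_{k+1}$; this set of finite measure $|E_k|+|E_{k+1}|$ will play the role of the interval $I$ in the atomic normalization, exactly as $|E_k|$ did in Lemma~\ref{claim:M_k-P_k atom}. For the moments, fixing $0\le\beta\le\N$ I would write
\[
\int h_\alpha^k(x)\,x^\beta\,dx = \frac{1}{|E_{k+1}|}\int_{E_{k+1}} g_\alpha^{k+1}(x)\,x^\beta\,dx - \frac{1}{|E_k|}\int_{E_k} g_\alpha^k(x)\,x^\beta\,dx,
\]
and apply the defining relation (\ref{phiConditionEk}): each normalized integral equals $1$ if $\beta=\alpha$ and $0$ otherwise, so the two terms cancel and $\int h_\alpha^k(x)\,x^\beta\,dx=0$ for every $0\le\beta\le\N$. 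This settles the vanishing moment condition immediately.

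The heart of the argument is the size estimate. Since $E_k$ and $E_{k+1}$ are disjoint, I would exploit orthogonality to write
\[
\|h_\alpha^k\|_{L^2}^2 = \frac{1}{|E_{k+1}|^2}\|G_\alpha^{k+1}\|_{L^2}^2 + \frac{1}{|E_k|^2}\|G_\alpha^k\|_{L^2}^2,
\]
and then estimate each term by combining the pointwise bound $|G_\alpha^m(x)|\le\Calpha(2^m R)^{-\alpha}$ from Lemma~\ref{lemma : polynomial G} (with $N=\N$ and $R=\zeta|I|/2$) with $\|G_\alpha^m\|_{L^2}^2\le\|G_\alpha^m\|_{L^\infty}^2\,|E_m|$. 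Substituting the measures (recall $|E_m|=2^{m-1}\zeta|I|$ for $m\ge1$) and bounding $\sqrt{1+2^{-1-2\alpha}}\le 1+2^{-\alpha-1}$, this produces, for $k\ge1$, an estimate of the form $\|h_\alpha^k\|_{L^2}\le\Calpha(1+2^{-\alpha-1})(2^{k-1}\zeta|I|)^{-\alpha-1/2}$. Defining $\mu_\alpha^k:=\|h_\alpha^k\|_{L^2}\,\bigl(|E_k|+|E_{k+1}|\bigr)^{1/p-1/2}$ makes $h_\alpha^k/\mu_\alpha^k$ a $(p,2)$-atom by construction, so it only remains to check that this $\mu_\alpha^k$ respects the bound (\ref{eqn:claimresult2}). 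Using $|E_k|+|E_{k+1}|=3\cdot2^{k-1}\zeta|I|$ and $\Calpha\le\Calphamax$, the size factor $3^{1/p-1/2}$ is dominated by the $3^{1/p}$ appearing in (\ref{eqn:claimresult2}), and the exponents of $2^{k-1}\zeta|I|$ match, so the desired inequality follows.

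The step I expect to be the main obstacle is the boundary case $k=0$. There $E_0=[-R,R]$ is a single interval rather than an annulus, so the clean measure identity $|E_m|=2^{m-1}\zeta|I|$ fails at $m=0$ (one has $|E_0|=\zeta|I|$ and $|E_0\cup E_1|=2\zeta|I|$ instead of $\tfrac32\zeta|I|$). I would therefore redo the orthogonality estimate for $k=0$ with these corrected measures and verify separately that the resulting $\mu_\alpha^0$ still lies below the right-hand side of (\ref{eqn:claimresult2}); this is precisely where the slack built into the constant $\Calphamax$ and into the factor $3^{1/p}$ must be spent, and it is the only place demanding genuine care. Once the $k=0$ estimate is in hand, the support, vanishing moment, and size conditions together certify that $h_\alpha^k/\mu_\alpha^k$ is a $(p,2)$-atom, which completes the proof.
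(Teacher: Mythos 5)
Your route is the same as the paper's: check the support, obtain the vanishing moments by cancelling the two normalized integrals via (\ref{phiConditionEk}), take the canonical normalizer \(\mu_\alpha^k=\lVert h_\alpha^k\rVert_{L^2}\,\lvert E_k\cup E_{k+1}\rvert^{1/p-1/2}\), and bound it using \(\lvert G_\alpha^k\rvert\le\Calpha(2^kR)^{-\alpha}\) from Lemma~\ref{lemma : polynomial G}. The only real difference is that you square and use disjointness of the supports where the paper bounds \(\lvert h_\alpha^k(x)\rvert\) pointwise by the triangle inequality and multiplies by \(\lvert E_k\cup E_{k+1}\rvert^{1/2}\); your version even yields a slightly smaller constant, and for \(k\ge1\) your computation is correct and lands under (\ref{eqn:claimresult2}) with room to spare (you get \(3^{1/p-1/2}\) against the allowed \(3^{1/p}\)).

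The genuine gap is exactly the step you postponed, \(k=0\), and it cannot be closed in the way you hope. Since \(\lvert E_0\rvert=\lvert E_1\rvert=2R=\zeta\lvert I\rvert\), the normalizing measure is \(\lvert E_0\cup E_1\rvert=4R=4\cdot(2^{-1}\zeta\lvert I\rvert)\), so your \(\mu_\alpha^0\) carries a factor \(4^{1/p-1/2}\) while (\ref{eqn:claimresult2}) allows only \(3^{1/p}\) against \((2^{-1}\zeta\lvert I\rvert)^{1/p-\alpha-1}\): after simplification you must absorb \((4/3)^{1/p}\) into the remaining slack. That slack is bounded: with your orthogonality estimate it equals \(2(1+2^{-\alpha-1})\sqrt{2/(1+2^{-2\alpha})}\cdot(\Calphamax/\Calpha)\), whose \(\alpha\)-dependent part is at most \(\sqrt{10}\approx3.16\) (attained at \(\alpha=1\)), and the ratio \(\Calphamax/\Calpha\) equals \(1\) at the very index \(\alpha\) where the maximum defining \(\Calphamax\) is attained, so no slack hides in \(\Calphamax\) there. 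Since \((4/3)^{1/p}\to\infty\) as \(p\to0\) (already \((4/3)^{5}\approx4.2\) at \(p=1/5\)), the inequality you defer is false for small \(p\), and the proof cannot be completed for the full range \(0<p\le1\). For what it is worth, the paper's own proof silently commits the same error, applying \(\lvert E_k\cup E_{k+1}\rvert=3\cdot2^{k-1}\zeta\lvert I\rvert\) (valid only for \(k\ge1\)) also at \(k=0\); the clean repair for both arguments is to normalize by the enclosing interval \([-2^{k+1}R,\,2^{k+1}R]\), of measure \(4\cdot2^{k-1}\zeta\lvert I\rvert\) for every \(k\in\mathbb{N}_0\) (this also fixes the point, which you likewise gloss over, that \(E_k\cup E_{k+1}\) is not an interval for \(k\ge1\) as Definition~\ref{def:atom} requires), and to replace \(3^{1/p}\) by \(4^{1/p}\) in (\ref{eqn:claimresult2}), propagating the change into the constant \(C_4\) of Proposition~\ref{prop: Molecule is in Hp}. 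With that adjustment both arguments go through uniformly in \(k\); without it, your \(k=0\) step is a real gap, not a technicality.
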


		\begin{proof}
			Recall that \(h_\alpha^k(x) =(1/\lvert E_{k+1} \rvert) G_\alpha^{k+1}(x) - (1/\lvert E_k\rvert) G_\alpha^k(x)\). Since the support of  \(G_\alpha^{k+1}\) is in \(E_{k+1}\) and the support of \(G_\alpha^k\) is in \(E_k\), the support of \(h_\alpha^k\) is contained in \(E_k \cup E_{k+1}\). For any fixed  \(0 \leq \beta \leq \N\), we have
			\[\int h_\alpha^k(x) x^\beta dx = \frac{1}{\lvert E_{k+1} \rvert} \int G_\alpha^{k+1}(x) x^\beta dx - \frac{1}{\lvert E_k\rvert} \int G_\alpha^k (x) x^\beta dx = 0
			\]
			where the last equality holds due to (\ref{phiConditionEk}). This shows the vanishing moment condition for \(h_\alpha^k\). Next, we will find the range of \(\mu_\alpha^k\) satisfying
			\[\left\|h_\alpha^k/\mu_\alpha^k\right\|_{L^2} \leq \lvert E_k \cup E_{k+1}\rvert^{1/2-1/p}. \]
			Let \(\mu_\alpha^k =\left\|h_\alpha^k \right\|_{L^2} \lvert E_k \cup E_{k+1}\rvert^{-\left(1/2-1/p\right)} \). 
			Then since there is a constant \(\Calphamax\) such that 
			\(\lvert G_\alpha^k(x) \rvert \leq \Calphamax (2^{k-1} \zeta \lvert I \rvert)^{-\alpha}\) for every \(k\in \mathbb{N}_0\) and \(0 \leq \alpha \leq \N\), we have
			\begin{eqnarray*}
				\lvert h_\alpha^k(x)\rvert \leq  \frac{1}{\lvert E_{k+1} \rvert} \lvert G_\alpha^{k+1}(x)\rvert + \frac{1}{\lvert E_k \rvert} \lvert G_\alpha^k(x) \rvert \leq  \Calphamax \left(1 + 2^{-\alpha-1}\right) (2^{k-1} \zeta \lvert I \rvert)^{-\alpha - 1}.
			\end{eqnarray*}
			Using the fact that the support of \(h_\alpha^k\) is contained in \(E_k \cup E_{k+1}\), we obtain \(\left\|h_\alpha^k \right\|_{L^2} \leq \Calphamax \left( 1 + 2^{-\alpha-1} \right) (2^{k-1} \zeta \lvert I \rvert)^{-\alpha-1} \lvert E_k \cup E_{k+1} \rvert^{1/2}\) and conclude that
			\(
			\mu_\alpha^k \leq 3^{1/p} \Calphamax \left(1 + 2^{-\alpha-1}\right) (2^{k-1} \zeta \lvert I \rvert)^{1/p-\alpha-1}. 
			\) 
		\end{proof}
		
	\end{appendices}
	
	\bibliography{Hardy_revised}
	
	
\end{document}